\DeclareMathOperator{\Dec}{Dec}
\theoremstyle{plain}
\newtheorem{theorem}{Theorem}[section]
\newtheorem{lemma}[theorem]{Lemma}
\newtheorem{corollary}[theorem]{Corollary}
\newtheorem{definition}[theorem]{Definition}
\newtheorem{example}[theorem]{Example}
\theoremstyle{remark}
\newtheorem{remark}[theorem]{Remark}
\title[Sublattices of free lattices]{Generalizing Galvin and J\'onsson's classification to \textbf{N}$_\textbf{5}$}
\author{Brian T. Chan}
\address{
 Department of Mathematics \\
 University of British Columbia\\
 Vancouver BC V6T 1Z2, Canada}
\email{bchan@math.ubc.ca}
\date{\today}
\subjclass[2010]{06B05, 06B20, 06B25 ,06C99}
\keywords{distributive sublattices of free lattices, sublattices of free lattices, the variety generated by the pentagon, non-modular lattice varieties, modular lattices}
\thanks{The author was supported in part by the Natural Sciences and Engineering Research Council of Canada \includegraphics[scale = 0.2]{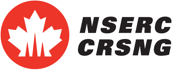} [funding reference number PGSD2 - 519022 - 2018]}
\begin{document}

\begin{abstract} The problem of determining (up to lattice isomorphism) which lattices are sublattices of free lattices is in general an extremely difficult and an unsolved problem. A notable result towards solving this problem was established by Galvin and J\'onsson when they classified (up to lattice isomorphism) all of the distributive sublattices of free lattices in 1959. In this paper, we weaken the requirement that a sublattice of a free lattice be distributive to requiring that a such a lattice belongs in the variety of lattices generated by the pentagon $N_5$. Specifically, we use McKenzie's list of join-irreducible covers of the variety generated by $N_5$ to extend Galvin and J\'onsson's results by proving that all sublattices of a free lattice that belong to the variety generated by $N_5$ satisfy three structural properties. Afterwards, we explain how the results in this paper can be partially extended to lattices from seven known infinite sequences of semidistributive lattice varieties.
\end{abstract}
\maketitle
%
\section{Introduction} \label{sec:intro}

Free lattices have been the subject of much investigation within lattice theory, with Whitman introducing \emph{Whitman's condition} \cite{Whitman 1, Whitman 2} and J\'onsson introducing \emph{semidistributive lattices} to study properties of free lattices \cite{Semidistributive origins 1, Semidistributive origins 2}. An important, and far from solved, problem within the theory of free lattices that has received a lot of attention over the years is the problem of determining, up to lattice isomorphism, sublattices of free lattices \cite{FL}. The majority of what is known about sublattices of free lattices is based on what we know about finite sublattices of free lattices, and includes extensions to finitely generated sublattices of free lattices and projective lattices \cite{FL}. Finite sublattices of free lattices can be characterized by using Whitman's condition and a property involving \emph{join covers} of elements \cite{FL}. Later on, this characterization was strenghened to requiring only the semidistributive laws and Whitman's condition \cite{Nations proof}. Regarding properties that are satisfied by all sublattices of free lattices, the following is known. In 1982, Baldwin, Berman, Glass, and Hodges \cite{Baldwin et al} proved that if $S$ is an uncountable antichain in a free lattice, then $|\{a \wedge b : a, b \in S \text{ and } a \neq b \} | > 1$. Moreover, in 1995,  Reinhold \cite{Reinhold} proved that all sublattices of free lattices satisfy stronger forms of the semidistributive laws known as the \emph{staircase distributive law} and the \emph{dual staircase distributive law} by proving that all free lattices satisfy the \emph{$*$-distributive laws}, an infinitary form of the staircase distributive laws \cite{Reinhold}. \\

A notable result towards analysing sublattices of free lattices was proved in 1959 by Galvin and J\'onsson when they classified, up to lattice isomorphism, all of the distributive sublattices of free lattices \cite{Distributive sublattices}. As the variety of distributive lattices is the smallest variety of lattices, a natural question to ask is whether Galvin and J\'onsson's results can be extended to other, more general, varieties of lattices. In this paper, we consider the second smallest variety of lattices $\mathcal{N}$ that contains sublattices of free lattices, where $\mathcal{N}$ is the variety of lattices generated by $N_5$, and prove three structural properties that all such sublattices satisfy. The first property, Theorem \ref{cube theorem}, is related to Galvin and J\'onsson's classification and involves atoms and coatoms of lattices. The second and third properties, Theorem \ref{partial modularity 1} and Theorem \ref{partial modularity 2}, resemble properties satisfied by modular lattices. Afterwards, we explain in Corollary \ref{c1}, Corollary \ref{c2}, Corollary \ref{c3}, Corollary \ref{c4}, and Corollary \ref{c5}, how of the results in this paper can be partially extended to lattices from seven known infinite sequences of semidistributive lattice varieties. \\

The paper is structured as follows. In Section \ref{sec:background}, we give the background which includes relevant results relating to the variety generated by the pentagon $N_5$, and Galvin and J\'onsson's classification of distributive sublattices of free lattices. In Section \ref{sec:atoms and coatoms} and Section \ref{sec:perpective properties}, we prove the main results of this paper which are three structural properties that are satisfied by all sublattice of a free lattice that are in the variety generated by the pentagon. Lastly, in Section \ref{sec:extensions}, we describe how the results in Section \ref{sec:atoms and coatoms} and Section \ref{sec:perpective properties} can be partially extended to lattices from seven known infinite sequences of semidistributive lattice varieties.

\section{Background}\label{sec:background}

Let $\mathbb{N}$ denote the set of positive integers, and let $\mathbb{N}_0$ denote the set of non-negative integers. If $P$ is a poset and if $a,b \in P$, then we write $a \parallel b$ to mean that $a \leq b$ is false and that $b \leq a$ is false. We also write $a \geq b$ to mean that $b \leq a$, $a < b$ to mean that $a \leq b$ and $a \neq b$, $a > b$ to mean that $b < a$, $a \nless b$ to mean that $a < b$ is false, and $a \ngtr b$ to mean that $a > b$ is false. If $P$ is a poset, then we consider any subset of $P$ as a subposet with partial order inherited from $P$ and vice versa. If $S$ is a set, then a \emph{set partition} of $S$ is a set $\mathcal{F}$ of non-empty subsets of $S$ such that every element of $S$ is contained in exactly one element of $\mathcal{F}$. A subset $S$ of a poset $P$ is \emph{convex} if for all $a, b, c \in L$ such that $a,b \in S$, $a \leq c \leq b$ implies that $c \in S$. We call convex subsets convex subposets and vice versa. If $A$ and $B$ are subsets of a poset $P$, we write $A \cup B$ to denote the subposet of $P$ whose set of elements is the set-theoretic union of $A$ and $B$ as sets, and we write $A \cap B$ to denote the subposet of $P$ whose set of elements is the set-theoretic intersection of $A$ and $B$ as sets. Lastly, if $P$ is a poset, if $a,b \in P$, and if $a < b$, then \emph{$b$ covers $a$ in $P$} (or \emph{$a$ is covered by $b$ in $P$}) if, for all $c \in P$ satisfying $a \leq c \leq b$, $c = a$ or $c = b$. \\

We write $a \vee b$ to denote joins in a lattice and we write $a \wedge b$ to denote meets in a lattice. Recall that a subposet $K$ of a lattice $L$ is a \emph{sublattice of $L$} if, for all $a, b \in K$, $a \vee b \in K$ and $a \wedge b \in K$. A \emph{convex sublattice} of a lattice $L$ is a sublattice of $L$ that is also a convex subset of $L$. If $L$ is a lattice and if $X$ is a subset of $L$, then the \emph{sublattice of $L$ generated by $X$} is the smallest sublattice of $L$ that contains $X$. A lattice $L$ is \emph{finitely generated} if there exists a finite subset $X$ of $L$ such that the sublattice of $L$ generated by $X$ is $L$. If $L$ is a lattice, then an element $a \in L$ is \emph{doubly reducible} if there exist elements $a_1, a_2, a_3, a_4 \in L$ such that $a_1 \parallel a_2$, $a_3 \parallel a_4$, and $a = a_1 \vee a_2 = a_3 \wedge a_4$. \\

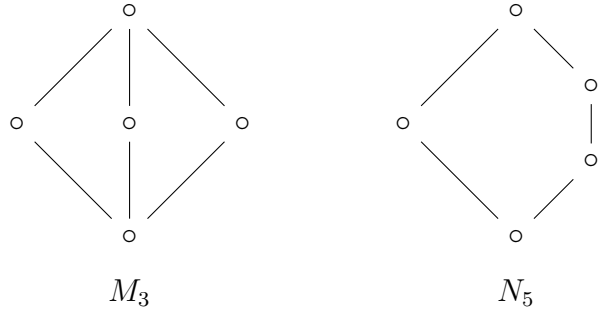
\begin{figure}
\begin{center}
\begin{tikzpicture}[scale=1.5]
\node (x1) at (0,0) {$\circ$};
\node (x2) at (-1,-1) {$\circ$};
\node (x3) at (0,-1) {$\circ$};
\node (x4) at (1,-1) {$\circ$};
\node (x5) at (0,-2) {$\circ$};
\node (x) at (0,-2.5) {$M_3$};

\draw (x1) -- (x2) -- (x5) -- (x3) -- (x1) -- (x4) -- (x5);
\end{tikzpicture} \;\;\;\;\;\;\;\;\;\;\;\;
\begin{tikzpicture}[scale=0.5]
\node (x1) at (0,0) {$\circ$};
\node (x2) at (-3,-3) {$\circ$};
\node (x3) at (2,-2) {$\circ$};
\node (x4) at (2,-4) {$\circ$};
\node (x5) at (0,-6) {$\circ$};
\node (x) at (0,-7.5) {$N_5$};

\draw (x1) -- (x2) -- (x5) -- (x4) -- (x3) -- (x1);
\end{tikzpicture}
\end{center}
\caption{The two lattices in the $M_3$-$N_5$ Theorem.}
\end{figure}
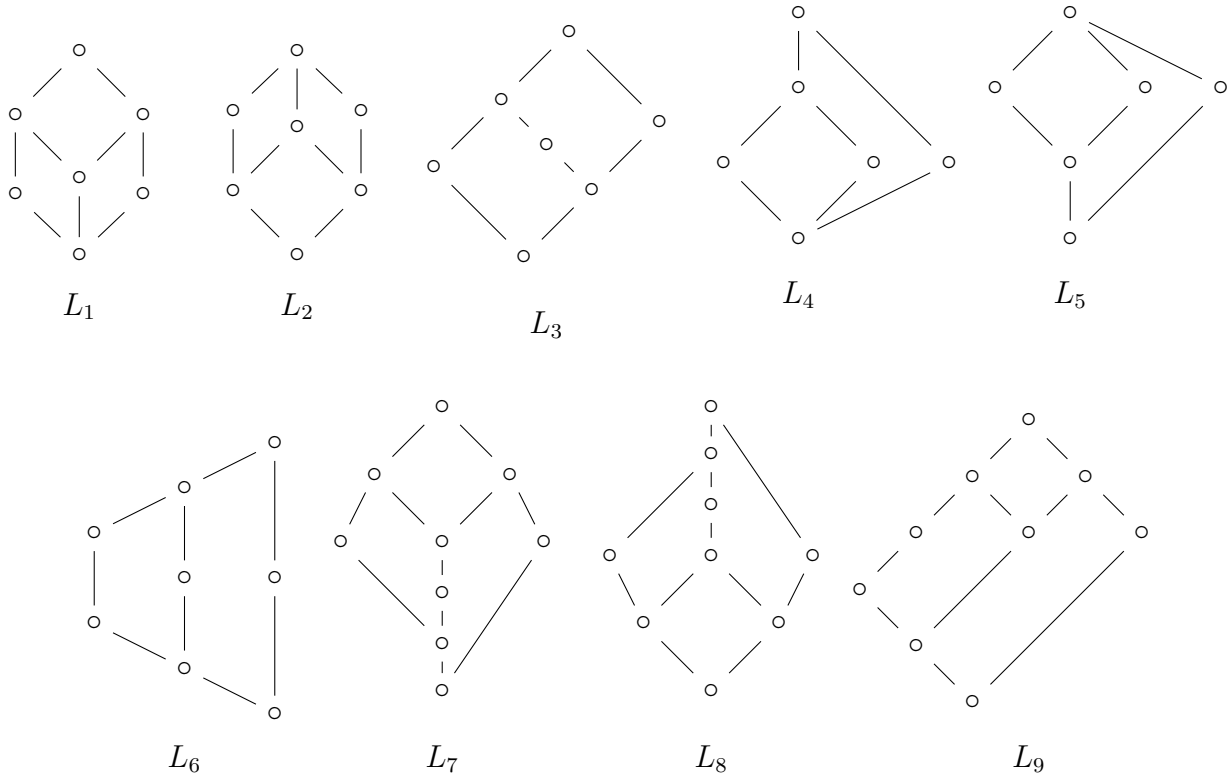
\begin{figure}\label{McKenzies List}

\begin{center}
\begin{tikzpicture}[scale=0.85]
\node (a) at (0,0) {$\circ$};
\node (b) at (-1,-1) {$\circ$};
\node (c) at (1,-1) {$\circ$};
\node (d) at (0,-2) {$\circ$};
\node (e) at (-1,-2.25) {$\circ$};
\node (f) at (1,-2.25) {$\circ$};
\node (g) at (0,-3.2) {$\circ$};
\node (x) at (0,-4) {$L_1$};
\node (y) at (0,-5) {$ $};

\draw (a) -- (c) -- (f) -- (g) -- (e) -- (b) -- (a);
\draw (b) -- (d) -- (c);
\draw (d) --(g);
\end{tikzpicture} \;\;\;\;
\begin{tikzpicture}[scale=0.85]
\node (a) at (0,-3.2) {$\circ$};
\node (b) at (-1,-2.2) {$\circ$};
\node (c) at (1,-2.2) {$\circ$};
\node (d) at (0,-1.2) {$\circ$};
\node (e) at (-1,-0.95) {$\circ$};
\node (f) at (1,-0.95) {$\circ$};
\node (g) at (0,0) {$\circ$};
\node (x) at (0,-4) {$L_2$};
\node (y) at (0,-5) {$ $};

\draw (a) -- (c) -- (f) -- (g) -- (e) -- (b) -- (a);
\draw (b) -- (d) -- (c);
\draw (d) --(g);
\end{tikzpicture} \;\;
\begin{tikzpicture}[scale=0.6]
\node (a) at (0.5,0.5) {$\circ$};
\node (b) at (-1,-1) {$\circ$};
\node (c) at (0,-2) {$\circ$};
\node (d) at (1,-3) {$\circ$};
\node (e) at (2.5,-1.5) {$\circ$};
\node (f) at (-2.5,-2.5) {$\circ$};
\node (g) at (-0.5,-4.5) {$\circ$};
\node (x) at (0,-6) {$L_3$};
\node (y) at (0,-7) {$ $};

\draw (a) -- (e) -- (d) -- (g) -- (f) -- (b) -- (a);
\draw (b) -- (c) -- (d);
\end{tikzpicture} \;
\begin{tikzpicture}
\node (a) at (0,0) {$\circ$};
\node (b) at (0,-1) {$\circ$};
\node (c) at (-1,-2) {$\circ$};
\node (d) at (1,-2) {$\circ$};
\node (e) at (2,-2) {$\circ$};
\node (f) at (0,-3) {$\circ$};
\node (x) at (0,-3.75) {$L_4$};
\node (y) at (0,-4.75) {$ $};

\draw (a) -- (b) -- (c) -- (f) -- (d) -- (b);
\draw (a) -- (e) -- (f);
\end{tikzpicture}
\begin{tikzpicture} 
\node (a) at (0,0) {$\circ$};
\node (b) at (0,1) {$\circ$};
\node (c) at (-1,2) {$\circ$};
\node (d) at (1,2) {$\circ$};
\node (e) at (2,2) {$\circ$};
\node (f) at (0,3) {$\circ$};
\node (x) at (0,-0.75) {$L_5$};
\node (y) at (0,-1.75) {$ $};

\draw (a) -- (b) -- (c) -- (f) -- (d) -- (b);
\draw (a) -- (e) -- (f);
\end{tikzpicture}
\end{center}

\begin{center}
\begin{tikzpicture}[scale=0.6]
\node (a) at (0,0) {$\circ$};
\node (b) at (-2,-1) {$\circ$};
\node (c) at (-4,-2) {$\circ$};
\node (d) at (-2,-3) {$\circ$};
\node (e) at (-4,-4) {$\circ$};
\node (f) at (-2,-5) {$\circ$};
\node (g) at (0,-6) {$\circ$};
\node (x) at (-2,-7) {$L_6$};
\node (h) at (0,-3) {$\circ$};
\draw (a) -- (b) -- (c) -- (e) -- (f) -- (g) -- (h) -- (a);
\draw (b) -- (d) -- (f);
\end{tikzpicture} \;
\begin{tikzpicture}[scale=0.9]
\node (a) at (0,0) {$\circ$};
\node (b) at (-1,-1) {$\circ$};
\node (c) at (1,-1) {$\circ$};
\node (d) at (-1.5,-2) {$\circ$};
\node (e) at (0,-2) {$\circ$};
\node (f) at (1.5,-2) {$\circ$};
\node (g) at (0,-2.75) {$\circ$};
\node (h) at (0,-3.5) {$\circ$};
\node (i) at (0,-4.2) {$\circ$};
\node (x) at (0,-5.2) {$L_7$};

\draw (b) -- (e) -- (c) -- (f) -- (i) -- (h) -- (g) -- (e);
\draw (h) -- (d) -- (b) -- (a) -- (c);
\end{tikzpicture} \;
\begin{tikzpicture}[scale=0.9]
\node (a) at (0,0) {$\circ$};
\node (b) at (-1,1) {$\circ$};
\node (c) at (1,1) {$\circ$};
\node (d) at (-1.5,2) {$\circ$};
\node (e) at (0,2) {$\circ$};
\node (f) at (1.5,2) {$\circ$};
\node (g) at (0,2.75) {$\circ$};
\node (h) at (0,3.5) {$\circ$};
\node (i) at (0,4.2) {$\circ$};
\node (x) at (0,-1) {$L_8$};

\draw (b) -- (e) -- (c) -- (f) -- (i) -- (h) -- (g) -- (e);
\draw (h) -- (d) -- (b) -- (a) -- (c);
\end{tikzpicture}
\begin{tikzpicture}[scale=0.75]
\node (a) at (0,0) {$\circ$};
\node (b) at (-1,-1) {$\circ$};
\node (c) at (1,-1) {$\circ$};
\node (d) at (-2,-2) {$\circ$};
\node (e) at (0,-2) {$\circ$};
\node (f) at (-3,-3) {$\circ$};
\node (g) at (-2,-4) {$\circ$};
\node (h) at (2,-2) {$\circ$};
\node (i) at (-1,-5) {$\circ$};
\node (x) at (0,-6) {$L_9$};

\draw (a) -- (b) -- (d) -- (f) -- (g) -- (i) -- (h) -- (c) -- (a);
\draw (b) -- (e);
\draw (c) -- (e) -- (g);
\end{tikzpicture}
\end{center}

\caption{The first nine lattices in McKenzie's list of subdirectly irreducible lattices (\cite{VL}, p. 19).}
\end{figure}
\begin{figure}

\begin{center}
\begin{tikzpicture}[scale=0.75]
\node (a) at (0,0) {$\circ$};
\node (b) at (1,1) {$\circ$};
\node (c) at (-1,1) {$\circ$};
\node (d) at (2,2) {$\circ$};
\node (e) at (0,2) {$\circ$};
\node (f) at (3,3) {$\circ$};
\node (g) at (2,4) {$\circ$};
\node (h) at (-2,2) {$\circ$};
\node (i) at (1,5) {$\circ$};
\node (x) at (0,-1) {$L_{10}$};
\node (y) at (0,-2) {$ $};

\draw (a) -- (b) -- (d) -- (f) -- (g) -- (i) -- (h) -- (c) -- (a);
\draw (b) -- (e);
\draw (c) -- (e) -- (g);
\end{tikzpicture} \;\;
\begin{tikzpicture}[scale=0.65]
\node (a) at (0,0) {$\circ$};
\node (b) at (-1,-1) {$\circ$};
\node (c) at (1,-1) {$\circ$};
\node (d) at (-2,-2) {$\circ$};
\node (e) at (0,-2) {$\circ$};
\node (f) at (-3,-3) {$\circ$};
\node (g) at (-1,-3) {$\circ$};
\node (h) at (3,-3) {$\circ$};
\node (i) at (1,-5) {$\circ$};
\node (j) at (0,-6) {$\circ$};
\node (x) at (0,-7) {$L_{11}$};
\node (y) at (0,-8) {$ $};
\draw (d) -- (f) -- (j) -- (i) -- (g) -- (d) -- (b) -- (a) -- (c) -- (h) -- (i);
\draw (b) -- (e);
\draw (g) -- (e) -- (c);
\end{tikzpicture} \;\;
\begin{tikzpicture}[scale=0.65]
\node (a) at (0,0) {$\circ$};
\node (b) at (-1,1) {$\circ$};
\node (c) at (1,1) {$\circ$};
\node (d) at (-2,2) {$\circ$};
\node (e) at (0,2) {$\circ$};
\node (f) at (-3,3) {$\circ$};
\node (g) at (-1,3) {$\circ$};
\node (h) at (3,3) {$\circ$};
\node (i) at (1,5) {$\circ$};
\node (j) at (0,6) {$\circ$};
\node (x) at (0,-1) {$L_{12}$};
\node (y) at (0,-2) {$ $};

\draw (d) -- (f) -- (j) -- (i) -- (g) -- (d) -- (b) -- (a) -- (c) -- (h) -- (i);
\draw (b) -- (e);
\draw (g) -- (e) -- (c);
\end{tikzpicture}
\end{center}
\begin{center}
\begin{tikzpicture}[scale=1.3]
\node (a) at (0,0) {$\circ$};
\node (b) at (-1.2,-1) {$\circ$};
\node (c) at (0,-1) {$\circ$};
\node (d) at (1.2,-1) {$\circ$};
\node (e) at (-1.2,-2) {$\circ$};
\node (f) at (0,-2) {$\circ$};
\node (g) at (1.2,-2) {$\circ$};
\node (h) at (0,-3) {$\circ$};
\node (hh) at (-0.6,-2.5) {$\circ$};
\node (x) at (0,-4) {$L_{13}$};

\draw (a) -- (d) -- (g) -- (h) -- (hh) -- (e) -- (b) -- (a);
\draw (b) -- (f) -- (d);
\draw (e) -- (c) -- (g);
\draw (a) -- (c);
\draw (f) -- (h);
\end{tikzpicture} \;\;\;\;\;\;\;\;
\begin{tikzpicture}[scale=1.3]
\node (a) at (0,0) {$\circ$};
\node (b) at (-1.2,-1) {$\circ$};
\node (c) at (0,-1) {$\circ$};
\node (d) at (1.2,-1) {$\circ$};
\node (e) at (-1.2,-2) {$\circ$};
\node (f) at (0,-2) {$\circ$};
\node (g) at (1.2,-2) {$\circ$};
\node (h) at (0,-3) {$\circ$};
\node (aa) at (0.6,-0.5) {$\circ$};
\node (x) at (0,-4) {$L_{14}$};
\draw (a) -- (aa) -- (d) -- (g) -- (h) -- (e) -- (b) -- (a);
\draw (b) -- (f) -- (d);
\draw (e) -- (c) -- (g);
\draw (a) -- (c);
\draw (f) -- (h);
\end{tikzpicture} \;\;\;\;\;\;\;\;
\begin{tikzpicture}[scale = 0.7]
\node (a) at (0,0) {$\circ$};
\node (b) at (-1,-1) {$\circ$};
\node (c) at (1,-1) {$\circ$};
\node (d) at (0,-2) {$\circ$};
\node (e) at (-3,-3) {$\circ$};
\node (f) at (0,-4) {$\circ$};
\node (g) at (3,-3) {$\circ$};
\node (h) at (-1,-5) {$\circ$};
\node (i) at (1,-5) {$\circ$};
\node (j) at (0,-6) {$\circ$};
\node (x) at (0,-7) {$L_{15}$};

\draw (a) -- (c) -- (g) -- (i) -- (j) -- (h) -- (e) -- (b) -- (a);
\draw (b) -- (d) -- (c);
\draw (h) -- (f) -- (i);
\draw (d) -- (f);
\end{tikzpicture}
\end{center}

\caption{The last six lattices in McKenzie's list of subdirectly irreducible lattices (\cite{VL}, p. 19)}
\end{figure}
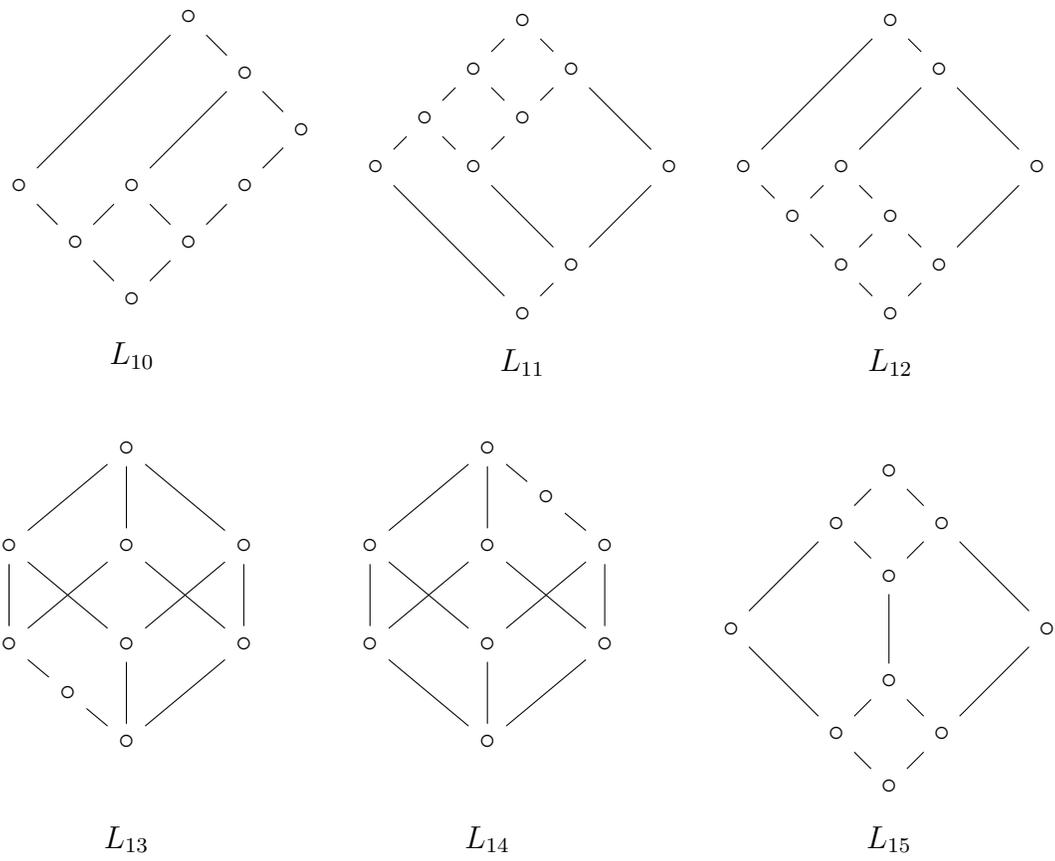

If $K$ and $L$ are lattices, then a \emph{lattice homomorphism $f : K \to L$} is a function from the set of elements of $K$ to the set of elements of $L$ such that for all $a,b \in K$, $f(a \vee b) = f(a) \vee f(b)$ and $f(a \wedge b) = f(a) \wedge f(b)$. If $K$ and $L$ are lattices, then $K$ is \emph{isomorphic to} $L$ if there exist lattice homomorphisms $f : K \to L$ and $g : L \to K$ such that $f$ and $g$ are bijections, $g \circ f$ is the identity map on $K$, and $f \circ g$ is the identity map on $L$. Lastly, call a bijective lattice homomorphism a \emph{lattice isomorphism}. \\

Recall that if $S$ is a set, then a \emph{free lattice on $S$} is a lattice $FL(S)$ that satisfies the following \emph{universal property}. For all lattices $L$ and for all functions $f$ from $S$ to the set of elements of $L$, there exists a unique lattice homomorphism $g : FL(S) \to L$ such that for all $s \in S$, $g(s) = f(s)$ (\cite{FL}, p. 136). Any two free lattices on $S$ are isomorphic, so we say that $FL(S)$ is \emph{the} free lattice on $S$ (\cite{FL}, p. 136). \\

If $P$ and $Q$ are posets, then define their \emph{direct product} $P \times Q$ to be the poset whose set of elements is $\{(p, q) : p \in P \text{ and } q \in Q \}$ and where $(p_1, q_1) \leq (p_2, q_2)$ if and only if $p_1 \leq p_2$ and $q_1 \leq q_2$. Similarly, if $S_1$ and $S_2$ are sets, then define $S_1 \times S_2 = \{(s_1, s_2) : s_1 \in S_1 \text{ and } s_2 \in S_2\}$. A poset $P$ is a \emph{chain} if for all $a,b \in P$, $a \leq b$ or $b \leq a$. With that definition in mind, let, for all $n \in \mathbb{N}$, $\textbf{n}$ denote the $n$-element chain. We will also consider $\mathbb{Z}$ as a chain with partial order defined by $\cdots < -2 < -1 < 0 < 1 < 2 < \cdots$. \\

An important condition in free lattice theory is \emph{Whitman's Condition}, discovered by Whitman in the 40's when finding a solution to the \emph{Word Problem for Free Lattices}.

\begin{definition}(Whitman \cite{Whitman 1, Whitman 2}) A lattice $L$ satisfies \emph{Whitman's Condition} if for all $a,b,c,d \in L$ satisfying $a \wedge b \leq c \vee d$, $a \leq c \vee d$, $b \leq c \vee d$, $a \wedge b \leq c$, or $a \wedge b \leq d$.
\end{definition}

He proved that all sublattices of free lattices satisfy Whitman's condition.

\begin{theorem}(Whitman \cite{Whitman 1, Whitman 2})\label{Whitmans Theorem} All sublattices of free lattices satisfy Whitman's condition.
\end{theorem}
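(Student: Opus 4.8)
The plan is to reduce the statement to free lattices and then to derive Whitman's condition for a free lattice directly from an explicit construction of it. \emph{Reduction to free lattices.} Whitman's condition refers only to the order relation $\leq$ and the operations $\vee$ and $\wedge$, and a sublattice $K$ of a lattice $L$ inherits all three from $L$: for $a,b,c,d \in K$, the elements $a \wedge b$ and $c \vee d$ computed in $K$ agree with those computed in $L$, and $\leq$ on $K$ is the restriction of $\leq$ on $L$. Hence, if $L$ satisfies Whitman's condition and $a \wedge b \leq c \vee d$ holds in $K$, then this inequality holds in $L$; one of the four conclusions of Whitman's condition then holds in $L$; and since every element appearing in that conclusion already lies in $K$, the conclusion holds in $K$ as well. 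It therefore suffices to prove that each free lattice $FL(S)$ satisfies Whitman's condition.

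\emph{A construction of $FL(S)$ in which Whitman's condition is built in.} Following Whitman's solution of the word problem, I would realize $FL(S)$ concretely as the set $W(S)$ of lattice terms over $S$, modulo the equivalence $s \approx t$ defined by $s \preceq t$ and $t \preceq s$, where $\preceq$ is the relation on terms defined by recursion on total length as follows: for terms $s$ and $t$, $s \preceq t$ holds if and only if at least one of the following holds: $s = s_1 \vee s_2$ with $s_1 \preceq t$ and $s_2 \preceq t$; $t = t_1 \wedge t_2$ with $s \preceq t_1$ and $s \preceq t_2$; $s = s_1 \wedge s_2$ with $s_1 \preceq t$ or $s_2 \preceq t$; $t = t_1 \vee t_2$ with $s \preceq t_1$ or $s \preceq t_2$; or $s$ and $t$ are one and the same generator. (Each recursive reference is to a pair of terms of strictly smaller total length, so this is a legitimate definition.) In the case $s = s_1 \wedge s_2$ and $t = t_1 \vee t_2$, only the third and fourth clauses can contribute, so $s \preceq t$ holds precisely when $s_1 \preceq t$, $s_2 \preceq t$, $s \preceq t_1$, or $s \preceq t_2$ --- which is exactly Whitman's condition. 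Thus, once one checks that $W(S)/{\approx}$ is a lattice with join and meet induced by the syntactic $\vee$ and $\wedge$, and that it enjoys the universal property defining $FL(S)$, Whitman's condition for $FL(S)$ follows at once.

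\emph{What must be verified, and the main obstacle.} The remaining work is to check: reflexivity of $\preceq$; the monotonicity facts $s_i \preceq s_1 \vee s_2$ and $t_1 \wedge t_2 \preceq t_i$; that the $\approx$-classes form a lattice under the syntactic operations (idempotency, commutativity, associativity, and the two absorption laws, each reducing to a brief manipulation of the clauses above); and that, for every function $f : S \to L$, evaluating terms in $L$ induces a well-defined map $W(S)/{\approx} \to L$ that preserves $\preceq$, hence a lattice homomorphism extending $f$, which --- together with the fact that a homomorphism out of $W(S)/{\approx}$ is determined by its restriction to $S$ --- gives the universal property. Every one of these is routine once the single genuinely hard step is in place: \emph{transitivity of $\preceq$}, namely that $s \preceq t$ and $t \preceq u$ imply $s \preceq u$. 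I would prove transitivity by induction on the total number of operation symbols appearing in $s$, $t$, and $u$, with a case analysis on the outermost connective of each of the three terms; the awkward configuration is the one in which $s$ is a meet and $u$ is a join while the middle term $t$ decomposes on a side matching neither, forcing one to apply the inductive hypothesis to several proper subterms in the right sequence so that the complexity strictly drops at each step. That transitivity argument is the crux of the proof; everything else is bookkeeping.
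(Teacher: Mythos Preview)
The paper does not give its own proof of this statement: Theorem~\ref{Whitmans Theorem} is quoted as background with a citation to Whitman \cite{Whitman 1, Whitman 2}, and no argument appears in the text. There is therefore nothing in the paper to compare your proposal against.

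That said, your proposal is correct and is precisely Whitman's original line of attack. The reduction to free lattices is immediate (Whitman's condition is a universal Horn sentence in the lattice operations, hence hereditary to sublattices), and your recursive definition of $\preceq$ on terms is Whitman's solution to the word problem; the observation that the clause for $s = s_1 \wedge s_2$, $t = t_1 \vee t_2$ \emph{is} Whitman's condition is exactly the point. You have also correctly isolated transitivity of $\preceq$ as the one nontrivial verification; the induction on total complexity with a case split on the outermost connectives of $s$, $t$, $u$ is the standard way through, and the ``awkward configuration'' you flag is indeed where the care is needed. The remaining checks (reflexivity, that syntactic $\vee$ and $\wedge$ descend to lattice operations on $W(S)/{\approx}$, and the universal property) are routine once transitivity is in hand, just as you say.
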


A fact that we will continually use is that if a lattice $L$ satisfies Whitman's Condition, then $L$ has no doubly reducible elements. In particular, by Theorem \ref{Whitmans Theorem}, any sublattice of a free lattice has no doubly reducible elements. Another important property discovered about sublattices of free lattices was discovered at around 1960 by J\'onsson.

\begin{definition}(J\'onsson \cite{Semidistributive origins 1, Semidistributive origins 2}) A lattice $L$ is \emph{semidistributive} if it satisfies the following \emph{semidistributive laws}.
\begin{itemize}
\item For all $a,b,c,d \in L$, if $a \vee b = d$ and $a \vee c = d$, then $a \vee (b \wedge c) = d$.
\item For all $a,b,c,d \in L$, if $a \wedge b = d$ and $a \wedge c = d$, then $a \wedge (b \vee c) = d$.
\end{itemize}
\end{definition}

He proved the following result.

\begin{theorem}(J\'onsson \cite{Semidistributive origins 1, Semidistributive origins 2}) All sublattices of free lattices are semidistributive.
\end{theorem}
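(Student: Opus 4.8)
The plan is to reduce the assertion to a statement about free lattices and then prove that statement by an induction powered by Whitman's solution of the word problem. Each semidistributive law is a universally quantified implication built only from the operations $\vee$ and $\wedge$ (that is, a quasi-identity), and a sublattice $K$ of a lattice $L$ computes $\vee$ and $\wedge$ exactly as $L$ does; hence any such implication valid in $L$ is valid in $K$. Since every sublattice of a free lattice lies inside some $FL(S)$, it suffices to show that each $FL(S)$ is semidistributive. Moreover, a function from $S$ into a lattice $L$ is the same datum as a function from $S$ into the dual lattice $L^{\mathrm{op}}$, so $FL(S)^{\mathrm{op}}$ again satisfies the universal property of the free lattice on $S$, giving $FL(S)\cong FL(S)^{\mathrm{op}}$. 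Thus $SD_\wedge$ in $FL(S)$ is exactly $SD_\vee$ in $FL(S)^{\mathrm{op}}\cong FL(S)$, and it is enough to verify $SD_\vee$ in every $FL(S)$.

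So fix $S$ and suppose $a\vee b=a\vee c=:d$ in $FL(S)$; the goal is $a\vee(b\wedge c)=d$, the inequality $\le$ being automatic. If some two of $a,b,c$ are comparable the conclusion is immediate: if $b\le a$ then $d=a$, forcing $c\le a$ and $b\wedge c\le a$; if $a\le b$ then $d=b\ge c$, so $b\wedge c=c$ and $a\vee(b\wedge c)=a\vee c=d$; if $b\le c$ or $c\le b$ then $b\wedge c\in\{b,c\}$, so $a\vee(b\wedge c)\in\{a\vee b,a\vee c\}=\{d\}$; and the cases $a\le c$, $c\le a$ follow from these by the symmetry $b\leftrightarrow c$ of the hypothesis. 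So we may assume $a,b,c$ are pairwise incomparable; then $a,b,c<d$, the element $d$ is join-reducible, and (by Whitman's condition, Theorem \ref{Whitmans Theorem}, together with the canonical-form theory of $FL(S)$) the canonical form of $d$ is a proper join $d=j_1\vee\cdots\vee j_n$ of join-irreducibles. It therefore suffices to prove $j_k\le a\vee(b\wedge c)$ for each $k$, using that $j_k\le a\vee b$ and $j_k\le a\vee c$.

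Here I would use the fact that in $FL(S)$ every element is a generator, an irredundant proper join of join-irreducibles, or an irredundant proper meet of meet-irreducibles, and that the order is governed by Whitman's recursive word-problem algorithm, in which a generator is join-prime and a meet tested below a join branches exactly as Whitman's condition prescribes. The generator case is then immediate: if $j_k$ is a generator, join-primeness applied to $j_k\le a\vee b$ and to $j_k\le a\vee c$ gives, whenever $j_k\not\le a$, both $j_k\le b$ and $j_k\le c$, so $j_k\le b\wedge c$. The remaining case, in which $j_k$ is a proper meet, I would handle by induction on total term length: Whitman's condition reduces $j_k\le a\vee b$ and $j_k\le a\vee c$ to strictly simpler comparisons (in each, either $j_k$ already lies under one side, or a meetand of $j_k$ lies under the join in question), and tracking these through the recursion, together with the minimality of the canonical representations, yields $j_k\le a\vee(b\wedge c)$; summing over $k$ gives $d=\bigvee_k j_k\le a\vee(b\wedge c)\le d$. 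The crux, and the expected main obstacle, is precisely this meet case: Whitman's condition alone does not imply semidistributivity, so the induction must genuinely exploit the special structure of $FL(S)$ --- uniqueness and minimality of canonical join and meet representations, and the strict decrease of complexity along Whitman's recursion --- both to keep the branching finite and controlled and to ensure that irredundancy survives each application of Whitman's condition. The two reductions and the degenerate comparability cases, by contrast, are routine.
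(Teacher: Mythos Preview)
The paper does not supply a proof of this theorem; it is stated with a citation to J\'onsson's original papers and used as background. So there is no in-paper argument to compare against, and the question is simply whether your sketch stands on its own.

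Your reductions---to $FL(S)$, and by self-duality to $SD_\vee$---are correct, and passing to the canonical join decomposition $d=j_1\vee\cdots\vee j_n$ of $d=a\vee b=a\vee c$ is the right move. The gap is the meet case, and it is a real one. If $j_k=m_1\wedge\cdots\wedge m_r$, Whitman applied to $j_k\le a\vee b$ yields $j_k\le a$, or $j_k\le b$, or $m_i\le a\vee b$ for \emph{some} $i$; applied to $j_k\le a\vee c$ it yields $j_k\le a$, or $j_k\le c$, or $m_{i'}\le a\vee c$ for \emph{some} $i'$. When the last alternatives occur the indices $i$ and $i'$ need not agree, and since each $m_i\ge j_k$ you have no reason to expect $m_i\le a\vee c$ (nor $m_{i'}\le a\vee b$), so the induction hypothesis cannot be invoked on either meetand. ``Tracking these through the recursion'' does not close the argument; this is precisely the sense in which, as you yourself note, Whitman's condition alone is not enough. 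What is missing is the join-refinement property of canonical forms in $FL(S)$: if $d=u\vee v$ and $d=j_1\vee\cdots\vee j_n$ is the canonical join form, then every $j_k$ satisfies $j_k\le u$ or $j_k\le v$, because the canonical joinands of $u\vee v$ are obtained by merging those of $u$ and of $v$ and deleting non-maximal ones. With this in hand no case split on the shape of $j_k$ is needed: from $d=a\vee b$ each $j_k$ lies below $a$ or below $b$, from $d=a\vee c$ each $j_k$ lies below $a$ or below $c$, so $j_k\nleq a$ forces $j_k\le b\wedge c$, and $d=\bigvee_k j_k\le a\vee(b\wedge c)$.
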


We also recall the notion of a distributive lattice.

\begin{definition}\label{distributive} (\cite{ILO}) A lattice $L$ is \emph{distributive} if the following \emph{distributive laws} are satisfied.
\begin{itemize}
\item For all $a,b,c \in L$, $a \vee (b \wedge c) = (a \vee b) \wedge (a \vee c)$.
\item For all $a,b,c \in L$, $a \wedge (b \vee c) = (a \wedge b) \vee (a \wedge c)$.
\end{itemize}
\end{definition}

Distributive sublattices of free lattices have been classified, and are, structurally, very simple.

\begin{theorem}\label{Galvin and Jonsson}(Galvin and J\'onsson \cite{Distributive sublattices}) A distributive lattice $D$ is a distributive sublattice of a free lattice if and only if there exists a set partition $\mathcal{F}$ of the set of elements of $D$ that satisfies all of the following properties.
\begin{itemize}
\item $|\mathcal{F}|$ is countable.
\item For all distinct $A, B \in \mathcal{F}$, $a < b$ in $P$ for all $a \in A$ and $b \in B$, or $b < a$ for all $a \in A$ and $b \in B$. 
\item For all $A \in \mathcal{F}$, $A$ is a countable chain, $A$ is isomorphic to $\textbf{2} \times C$ for some countable chain $C$, or $A$ is isomorphic to the three atom Boolen algebra.
\end{itemize}
\end{theorem}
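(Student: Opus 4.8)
The plan is to establish the two implications separately, reducing throughout to facts already at hand: every sublattice of a free lattice satisfies Whitman's condition and hence has no doubly reducible element (Theorem~\ref{Whitmans Theorem}) and is semidistributive, and the finite sublattices of free lattices are exactly the finite lattices satisfying Whitman's condition and the semidistributive laws; to these I add one external input about infinite free lattices, that a free lattice contains no uncountable chain. Since a distributive lattice is automatically semidistributive, the only local condition to monitor is Whitman's condition, and the first step is the elementary observation that \emph{a distributive lattice satisfies Whitman's condition if and only if it has no doubly reducible element}: if the condition fails at $a\wedge b\le c\vee d$ then $a\parallel b$ (else $a\wedge b$ equals $a$ or $b$, so $a\le c\vee d$ or $b\le c\vee d$), and distributivity gives
\[
a\wedge b=((a\wedge b)\wedge c)\vee((a\wedge b)\wedge d),
\]
whose two joinands are incomparable and strictly below $a\wedge b$, so that $a\wedge b$ is doubly reducible; the converse is immediate. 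Hence on the free-lattice side one may work with a lattice that is distributive, has no doubly reducible element, and has no uncountable chain.

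For the forward implication, let $D$ be such a lattice and let $\mathcal{F}$ be the set of connected components of the graph on the elements of $D$ whose edges are the incomparable pairs. I would first check that each component is a convex subset and that the components are pairwise strictly comparable in the strong sense the theorem requires. The engine is the remark that if $z$ is comparable to each of two incomparable elements $x,y$ then $z\ge x\vee y$ or $z\le x\wedge y$: if some $b$ lying in a component $B$ had elements $a_1<b<a_2$ with $a_1,a_2$ in a different component $A$, a path of incomparable pairs inside $A$ from $a_1$ to $a_2$ would contain consecutive terms $z_k\parallel z_{k+1}$ with $z_k<b<z_{k+1}$, forcing $z_k<z_{k+1}$, a contradiction; a symmetric argument prevents a component from having members on both sides of a fixed element of another component. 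Linearly ordering the components accordingly and picking one element from each produces a chain in the free lattice, so $|\mathcal{F}|$ is countable.

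It then remains to identify a single component $B$. If $B$ is a chain it is countable by the no-uncountable-chain input and we are done. Otherwise $B$ contains an incomparable pair, and one uses distributivity together with the absence of doubly reducible elements --- which already excludes $\textbf{3}\times\textbf{3}$, the four-atom Boolean algebra, and similar ``thick'' configurations --- and the absence of $M_3$ and $N_5$ sublattices, to analyse the join and meet structure around such a pair and conclude that $B$ is either $\textbf{2}\times C$ for a chain $C$ (again countable, $C$ being a chain in the free lattice) or, in the single exceptional case, the three-atom Boolean algebra. I expect this trichotomy for one component to be the main obstacle: the surrounding steps are bookkeeping, but extracting exactly these three types from ``connected, distributive, no doubly reducible element, no uncountable chain'' is the delicate part.

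For the converse, suppose $D$ carries a partition $\mathcal{F}$ as described. Since the blocks are linearly ordered and separated, $D$ is the linear (ordinal) sum, indexed by the countable chain $\mathcal{F}$, of its blocks. Each block --- a countable chain, $\textbf{2}\times C$ for a countable chain $C$, or the three-atom Boolean algebra --- embeds into the free lattice on a countably infinite set (which itself embeds into the free lattice on three generators): the first two cases need only that $FL(\mathbb{N})$ contains every countable chain and every countable ``ladder'' $\textbf{2}\times C$, and the last is an explicit eight-term realization. One then glues these embeddings into a single embedding of the ordinal sum; equivalently, one verifies that $D$ satisfies Whitman's condition --- which by the reduction above amounts to checking that no block has a doubly reducible element, immediate for the three-atom Boolean algebra and short for chains and ladders --- together with the minimal join-cover refinement condition governing sublattices of free lattices in the infinite case, both of which are controlled by the block decomposition. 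The technical point in this direction will be that gluing step (equivalently, the verification of the covering condition for infinite $D$); the remainder is routine.
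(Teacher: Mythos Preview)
The paper does not prove this theorem: it is stated as a background result attributed to Galvin and J\'onsson \cite{Distributive sublattices}, with no proof given here. Consequently there is no ``paper's own proof'' to compare your proposal against. The only additional information the paper supplies is the Remark immediately following the statement, which records that Galvin and J\'onsson in fact proved the result for distributive lattices with no doubly reducible elements; your opening reduction (distributive and Whitman's condition $\Longleftrightarrow$ distributive and no doubly reducible element) is therefore exactly the bridge between the free-lattice formulation stated here and the formulation in the original source.

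As to the proposal itself: it is an outline rather than a proof, and you flag this honestly. The reduction step and the definition of $\mathcal{F}$ via connected components of the incomparability graph are sound, and your argument that distinct components are strictly comparable is correct in spirit. The substantive gap is precisely where you say it is: the trichotomy for a single non-chain component (that it must be $\textbf{2}\times C$ or the three-atom Boolean algebra) is asserted but not argued, and this is the heart of the Galvin--J\'onsson paper. For the converse direction, your sketch conflates two strategies --- exhibiting explicit embeddings into a free lattice versus verifying an intrinsic characterization --- and the ``minimal join-cover refinement condition'' you invoke does not by itself characterize infinite sublattices of free lattices (cf.\ Example~\ref{Galvin and Jonsson ex} and the remarks around it); the cleaner route is the explicit embedding of the ordinal sum. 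None of this can be checked against the present paper, however, since no proof appears here.
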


\begin{remark} Galvin and J\'onsson \cite{Distributive sublattices} proved Theorem \ref{Galvin and Jonsson} for distributive lattices that have no doubly reducible elements. In particular, as stated by Galvin and J\'onsson in \cite{Distributive sublattices}, Theorem \ref{Galvin and Jonsson} is a characterization of distributive lattices with no doubly reducible elements.
\end{remark}

\begin{example}\label{Galvin and Jonsson ex} Consider the lattice $\textbf{2} \times \mathbb{Z}$. By Theorem \ref{Galvin and Jonsson}, $\textbf{2} \times \mathbb{Z}$ is a distributive sublattice of a free lattice. Moreover, it is neither finitely generated nor projective. It is not projective because it does not satisfy a property known as the \emph{minimal join cover refinement property} \cite{FL}.
\end{example}

Another class of lattices that we will refer to in this paper is as follows.

\begin{definition}\label{modular} (\cite{ILO}) A lattice $L$ is \emph{modular} if it satisfies the \emph{modular law}: For all $a, b, c \in L$ such that $a \leq c$, 
$$(a \vee b) \wedge c = a \vee (b \wedge c).$$
\end{definition}

Recall that the left-most lattice in Figure 1 is called the \emph{diamond}, which is denoted by $M_3$, and the right-most lattice in Figure 1 is called the \emph{pentagon}, which is denoted by $N_5$. An important property of modular and distributive lattices is the following result, established by Dedekind and Birkhoff, called the $M_3$-$N_5$ \emph{Theorem}.

\begin{theorem}(The $M_3$-$N_5$ Theorem, Dedekind and Birkhoff \cite{ILO}) A lattice $L$ is modular if and only if $L$ does not have a sublattice that is isomorphic to $N_5$. Moreover, a lattice $L$ is distributive if and only if $L$ does not have a sublattice that is isomorphic to $M_3$ or to $N_5$.
\end{theorem}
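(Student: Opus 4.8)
The plan is to prove the two biconditionals one at a time; in each case the direction asserting that modularity (respectively distributivity) \emph{forbids} the configuration is routine, while the converse needs an explicit copy of $N_5$ (respectively $M_3$) built from a witness to the failure of the law. For the easy implications, first note that the side condition $a\le c$ in the modular law is equivalent to $a=a\wedge c$, so the modular law is the honest identity $((a\wedge c)\vee b)\wedge c=(a\wedge c)\vee(b\wedge c)$; likewise the distributive laws are identities. Hence the class of modular lattices and the class of distributive lattices are each closed under the formation of sublattices. A direct check then shows that $N_5$ is not modular (labelling it with long side $0<a<b<1$ and short side $0<c<1$, one has $a\le b$ but $(a\vee c)\wedge b=b\ne a=a\vee(c\wedge b)$), that $M_3$ is not distributive (with atoms $p,q,r$, $p\vee(q\wedge r)=p\ne 1=(p\vee q)\wedge(p\vee r)$), that $M_3$ \emph{is} modular, and that every distributive lattice is modular (put $a\vee c=c$ in $a\vee(b\wedge c)=(a\vee b)\wedge(a\vee c)$). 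Combining these: a modular lattice contains no sublattice isomorphic to $N_5$, and a distributive lattice contains no sublattice isomorphic to $N_5$ or to $M_3$.

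For the hard direction of the first statement, suppose $L$ is not modular and fix $a,b,c\in L$ with $a\le c$ and $(a\vee b)\wedge c\ne a\vee(b\wedge c)$. Since $a$ and $b\wedge c$ both lie below $a\vee b$ and below $c$, the inequality $a\vee(b\wedge c)\le(a\vee b)\wedge c$ holds in every lattice, so the failure is strict: $a\vee(b\wedge c)<(a\vee b)\wedge c$. I would then take the five elements
$$o=b\wedge c,\qquad x=a\vee(b\wedge c),\qquad y=(a\vee b)\wedge c,\qquad i=a\vee b,\qquad p=b,$$
for which $o\le x<y\le i$ and $o\le p\le i$. Using only absorption and the relations $b\le a\vee b$ and $b\wedge c\le b$, one checks that $x\vee p=y\vee p=i$ and $x\wedge p=y\wedge p=o$, so $\{o,x,y,i,p\}$ is closed under $\vee$ and $\wedge$; and the displayed strict inequality rules out every coincidence among these elements (for example $x=o$ forces $a\le b$, hence $x=y$; $y=i$ forces $b\le c$, hence again $x=y$; and $p\in\{o,i,x,y\}$ each forces $a\le b$ or $b\le c$) and likewise makes $p$ incomparable to both $x$ and $y$. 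Thus $L$ has a sublattice isomorphic to $N_5$.

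For the hard direction of the second statement, suppose $L$ is not distributive. If $L$ is not modular, the previous step already produces an $N_5$, so I may assume $L$ is modular and fix $a,b,c$ witnessing the failure of distributivity. Following the classical argument I would form
$$d=(a\wedge b)\vee(b\wedge c)\vee(c\wedge a),\qquad u=(a\vee b)\wedge(b\vee c)\wedge(c\vee a),$$
$$\bar a=d\vee(u\wedge a),\qquad \bar b=d\vee(u\wedge b),\qquad \bar c=d\vee(u\wedge c),$$
and verify, by repeated use of the modular law (the cyclic symmetry in $a,b,c$ cutting the work to a single case), that $\bar a\vee\bar b=\bar b\vee\bar c=\bar c\vee\bar a=u$, that $\bar a\wedge\bar b=\bar b\wedge\bar c=\bar c\wedge\bar a=d$, and that $d\ne u$ — the last point being exactly where non-distributivity of $\{a,b,c\}$ is used (in a modular lattice the sublattice generated by three elements is distributive iff the associated $d$ and $u$ coincide). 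It then follows that $\bar a,\bar b,\bar c$ are pairwise incomparable, so $\{d,\bar a,\bar b,\bar c,u\}$ is a sublattice of $L$ isomorphic to $M_3$, which completes the proof.

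The main obstacle is the modular computation in this last step: showing that $\bar a,\bar b,\bar c$ share the join $u$ and the meet $d$ is a chain of applications of the modular law that has to be sequenced carefully, and one must separately check that the non-distributivity of the chosen triple genuinely propagates to $d\ne u$ and to the pairwise incomparability, i.e. that none of the five candidate elements collapse. The parallel — but much shorter — bookkeeping in the $N_5$ step, eliminating the degenerate cases, is precisely where the hypothesis that $L$ fails the modular law is consumed.
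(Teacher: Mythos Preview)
The paper does not give its own proof of this theorem: it is quoted as a classical result of Dedekind and Birkhoff with a citation to \cite{ILO}, and is used in the sequel only as a black box. Your proposal therefore cannot be compared against a proof in the paper; what you have written is essentially the standard textbook argument (as in the cited Davey--Priestley reference), and it is correct. The $N_5$ construction and the verification that the five elements are distinct and correctly related are fine; for the $M_3$ step, the construction $\bar a=d\vee(u\wedge a)=(d\vee a)\wedge u$ (the equality by modularity, since $d\le u$) together with its cyclic variants is the classical one, and the parenthetical fact you invoke---that in a modular lattice the triple $a,b,c$ generates a distributive sublattice iff $d=u$---is exactly the point where the hypothesis is consumed.
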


By the $M_3$-$N_5$ Theorem, and the fact that $M_3$ is not semidistributive, a sublattice of a free lattice is modular if and only if it is distributive. \\

In this paper, we will use terminology on varieties of lattices from \cite{VL}. In particular, recall the notion of a \emph{variety of lattices}, \emph{subdirectly irreducible lattices}, the \emph{variety generated by a lattice}, and the \emph{lattice of lattice varieties}. Note that the class $\mathcal{D}$ of distributive lattices is a variety of lattices. In fact, recall that $\mathcal{D}$ is the smallest non-trivial variety of lattices. Furthermore, recall the following. A variety $\mathcal{V}$ of lattices is \emph{join-irreducible} if there do not exist varieties $\mathcal{V}_1$ and $\mathcal{V}_2$ of lattices such that $\mathcal{V}_1 \neq \mathcal{V}$, $\mathcal{V}_2 \neq \mathcal{V}$, and $\mathcal{V}$ is the smallest variety that contains $\mathcal{V}_1$ and $\mathcal{V}_2$. Moreover, a variety $
\mathcal{V}_2$ of lattices \emph{covers} a variety $\mathcal{V}_1$ of lattices if $\mathcal{V}_1 \neq \mathcal{V}_2$, $\mathcal{V}_1 \subset \mathcal{V}_2$, and, for all varieties $\mathcal{V}$ of lattices, $\mathcal{V}_1 \subseteq \mathcal{V} \subseteq \mathcal{V}_2$ implies that $\mathcal{V} = \mathcal{V}_1$ or $\mathcal{V} = \mathcal{V}_2$. Lastly, if $\mathcal{V}$ is a variety of lattices, then write $L \in \mathcal{V}$ to mean that $L$ is a member of $\mathcal{V}$ and $L \notin \mathcal{V}$ to mean otherwise. For example, writing $L \in \mathcal{D}$ is equivalent to saying that $L$ is a distributive lattice. \\

Let $\mathcal{N}$ denote the variety of lattices that is generated by $N_5$. This variety of lattices is the smallest variety of lattices that contains the variety $\mathcal{D}$ of distributive lattices and that contains non-distributive sublattices of free lattices. To compare, it is known \cite{VL} that the variety of distributive lattice is the variety of lattices that is generated by the two element chain.

\begin{example}\label{Ninf} A simple example of a sublattice $L$ of a free lattice, where $L$ is not distributive and where $L \in \mathcal{N}$, is the lattice depicted below.

\begin{center}
\begin{tikzpicture}[scale=1]
\node (x1) at (0,0) {$\circ$};
\node (x2) at (-3,-3) {$\circ$};
\node (x3) at (1.8,-2) {$\circ$};
\node (x33) at (2,-2.66) {$\circ$};
\node (x44) at (2,-3.33) {$\circ$};
\node (x4) at (1.8,-4) {$\circ$};
\node (x5) at (0,-6) {$\circ$};

\node (x555) at (1.2,-4.66) {$\circ$};

\node (x55) at (0.6,-5.33) {$ $};

\node (x111) at (1.2,-1.33) {$\circ$};

\node (x11) at (0.6,-0.66) {$ $};

\node (1x1) at (0.3,-0.33) {$.$};
\node (2x1) at (0.45,-0.495) {$.$};
\node (3x1) at (0.57,-0.62) {$.$};

\node (1x5) at (0.3,-5.67) {$.$};
\node (2x5) at (0.45,-5.505) {$.$};
\node (3x5) at (0.57,-5.38) {$.$};

\draw (x11) -- (x111) -- (x3) -- (x33) -- (x44) -- (x4) -- (x555) -- (x55);
\draw (x5) -- (x2) -- (x1);

\end{tikzpicture}
\end{center}

One can check that for any finitely generated sublattice $L'$ of $L$, there exists a positive integer $k$ such that $L'$ is a sublattice of the $k$-fold direct product $N_5 \times N_5 \times \cdots \times N_5$. Hence, $L \in \mathcal{N}$. Moreover, as $FL(\omega)$ is a dense partial order and $N_5$ is a sublattice of $FL(\omega)$, $L$ is a sublattice of the free lattice $FL(\omega)$.
\end{example}

\begin{remark} As in Example \ref{Galvin and Jonsson ex}, the lattice in Example \ref{Ninf} is neither finitely generated nor projective. It is not projective because that lattice does not satisfy the minimal join cover refinement property.
\end{remark}

In 1972, McKenzie gave a list of fifteen subdirectly irreducible lattices $L_i$, $1 \leq i \leq 15$ with the property that for all $1 \leq i \leq 15$, the variety $\mathcal{L}_i$ of lattices generated by $L_i$ is a join-irreducible cover of $\mathcal{N}$ \cite{VL, EBNMV}. The list is given in Figure 2 and Figure 3. Later on, in 1979, J\'onsson and Rival proved that there are no other varieties of lattices that cover $\mathcal{N}$ \cite{JonssonRival, VL}. \\

As all sublattices of free lattices are semidistributive, the following definition will be very useful to us.

\begin{definition} (\cite{VL}, p. 81) A variety $\mathcal{V}$ of lattices is \emph{semidistributive} if every member of $\mathcal{V}$ is semidistributive.
\end{definition}

Semidistributive varieties can be characterized as follows.

\begin{theorem}(J\'onsson and Rival (\cite{VL}, p. 81), \cite{JonssonRival})\label{Jonsson Rival} A variety $\mathcal{V}$ of lattices is semidistributive if and only if $M_3, L_1, L_2, L_3, L_4, L_5 \notin \mathcal{V}$.
\end{theorem}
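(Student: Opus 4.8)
The plan is to prove the two implications separately; the forward one is routine, and the reverse one is where all the work lies.

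For the forward implication, suppose $\mathcal{V}$ is semidistributive. Since every member of a semidistributive variety is semidistributive, it is enough to check that none of $M_3, L_1, L_2, L_3, L_4, L_5$ is semidistributive, and this is read off directly from the Hasse diagrams: in $M_3$ with atoms $p,q,r$ one has $p \vee q = p \vee r = 1$ while $p \vee (q \wedge r) = p \vee 0 = p < 1$, so the first semidistributive law fails, and for each of $L_1, \dots, L_5$ one exhibits three elements witnessing a failure of the first or the second semidistributive law (the list $M_3, L_1, \dots, L_5$ is closed under order duality, so the two laws play symmetric roles). Hence $M_3, L_1, \dots, L_5 \notin \mathcal{V}$.

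For the reverse implication I argue by contraposition: assuming $\mathcal{V}$ is not semidistributive, I will produce a copy of one of the six lattices in $\mathcal{V}$. After dualizing if necessary, choose $L \in \mathcal{V}$ and $a,b,c \in L$ with $a \vee b = a \vee c$ but $a \vee (b \wedge c) < a \vee b$. The sublattice $L'$ generated by $\{a,b,c\}$ is a finitely generated member of $\mathcal{V}$ that still fails the first semidistributive law, since joins and meets in a sublattice agree with those in $L$. Writing $L'$ as a subdirect product of subdirectly irreducible quotients $L'/\theta_i$ --- each finitely generated and in $\mathcal{V}$ --- and using that the first semidistributive law is a universal Horn sentence, hence preserved under subdirect products, some factor $S := L'/\theta_i$ fails it. So $S$ is a finitely generated, subdirectly irreducible, non-semidistributive member of $\mathcal{V}$, and it suffices to show that one of $M_3, L_1, \dots, L_5$ lies in $V(S) \subseteq \mathcal{V}$; by J\'onsson's Lemma (each of these six lattices is finite and subdirectly irreducible) this is the same as showing one of them lies in $HS(S)$, i.e., embeds into a quotient of $S$.

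Now $S$ is not distributive, so by the $M_3$-$N_5$ Theorem either $M_3 \in HS(S)$ --- and we are done --- or $S$ has a sublattice isomorphic to $N_5$, in which case we may assume $M_3 \notin HS(S)$ and hence (again by J\'onsson's Lemma) $\mathcal{N} \subseteq V(S)$ with $M_3 \notin V(S)$. Since $\mathcal{N}$ is semidistributive --- by J\'onsson's Lemma every subdirectly irreducible member of $\mathcal{N}$ embeds in a quotient of $N_5$, and all of those are semidistributive --- while $S$ is not, $V(S)$ must lie strictly above $\mathcal{N}$. The remaining and decisive step, which I expect to be the main obstacle, is to show that the failure of semidistributivity in the $N_5$-generated lattice $S$ forces one of $L_1, \dots, L_5$ into $HS(S)$. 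This is the heart of the J\'onsson--Rival argument: it leans on McKenzie's explicit description of the fifteen subdirectly irreducible lattices $L_1, \dots, L_{15}$ that generate the join-irreducible covers of $\mathcal{N}$ --- of which exactly $L_1, \dots, L_5$ are non-semidistributive --- together with the J\'onsson--Rival theorem that these fifteen varieties are all the covers of $\mathcal{N}$, and it requires a careful combinatorial and congruence-theoretic study of how a minimal witness to a failure of the first semidistributive law can sit inside such an $S$, the point being that if no such $L_i$ appeared in $HS(S)$ then $S$ would have to be semidistributive. Granting this structural step, $L_i \in HS(S) \subseteq V(S) \subseteq \mathcal{V}$ for some $i \in \{1,\dots,5\}$, which completes the contrapositive; by contrast the preliminary reductions above use only J\'onsson's Lemma, the universal-Horn form of the semidistributive laws, and the $M_3$-$N_5$ Theorem, and are routine.
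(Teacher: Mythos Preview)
The paper does not prove this theorem: it is quoted in the background section as a result of J\'onsson and Rival, with a citation to \cite{JonssonRival} and \cite{VL}, and no argument is given. So there is no ``paper's own proof'' to compare against; the relevant question is whether your proposal stands on its own.

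Your forward implication is fine. The reverse implication, however, has a genuine gap at exactly the point you flag as ``the decisive step.'' You correctly reduce to a subdirectly irreducible, finitely generated $S \in \mathcal{V}$ that fails the first semidistributive law, and you dispose of the case $M_3 \in HS(S)$. But your plan for the remaining case is circular. You write that the step ``leans on \ldots\ the J\'onsson--Rival theorem that these fifteen varieties are all the covers of $\mathcal{N}$,'' and then that ``if no such $L_i$ appeared in $HS(S)$ then $S$ would have to be semidistributive.'' The first of these inputs does not do the work you want: knowing the list of \emph{covers} of $\mathcal{N}$ does not by itself tell you that an arbitrary variety strictly above $\mathcal{N}$ must contain one of them (for that you would need, at minimum, that the interval above $\mathcal{N}$ is atomic, which is again part of what J\'onsson and Rival prove), and even if $V(S)$ contained some cover $\mathcal{L}_i$, it could be one of the semidistributive ones $\mathcal{L}_6,\dots,\mathcal{L}_{15}$, which gives you nothing. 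The second assertion is precisely the statement you are trying to establish. So after the reductions, the proof is simply missing.

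The actual J\'onsson--Rival argument does not route through the cover structure of $\mathcal{N}$. It works directly with a hierarchy of weak semidistributivity identities $(\mathrm{SD}_n)$ and shows, by an explicit combinatorial analysis of a minimal failure of $(\mathrm{SD}_n)$ inside a lattice, that such a failure forces a sublattice isomorphic to one of $M_3, L_1, \dots, L_5$; conversely each of these six lattices fails every $(\mathrm{SD}_n)$. That analysis is the real content of the theorem, and your proposal neither reproduces it nor replaces it with an alternative mechanism. Your preliminary reductions (J\'onsson's Lemma, subdirect decomposition, the Horn form of semidistributivity) are sound and standard, but they only set the stage.
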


In particular, Theorem \ref{Jonsson Rival} implies that $\mathcal{N}$ is a semidistributive variety of lattices. In this paper, we will make essential use of all of all of the semidistributive lattices in McKenzie's list, which are the lattices $L_i$ for $6 \leq i \leq 15$. Because $L_i \notin \mathcal{L}_j$ for all satisfying $i \neq j$, it follows, by Theorem \ref{Jonsson Rival}, that for all $6 \leq i \leq 15$, $\mathcal{L}_i$ is a semidistributive variety.

\section{A lemma involving the lattice $L_{15}$}\label{sec:L15}

Before proving the three structural results of this paper, we prove a technical lemma that involves the lattice $L_{15}$.

\begin{lemma}\label{L15} Let $L$ be a lattice that has no doubly reducible elements, and let $a_1$, $a_2$, $a_3$, $b_1$, $b_2$, and $b_3$ be six distinct elements in $L$ such that $a_1 < a_2 < a_3$, $b_1 < b_2 < b_3$, $a_1 < b_3$, and $b_1 < a_3$. Moreover, assume that $a_2 \parallel b_i$ for all $1 \leq i \leq 3$, that $a_i \parallel b_2$ for all $1 \leq i \leq 3$, that $a_2 \vee b_2 = a_3 \vee b_3$, and that $a_2 \wedge b_2 = a_1 \wedge b_1$. Then $L$ has a sublattice that is isomorphic to $L_{15}$.
\end{lemma}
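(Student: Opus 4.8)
The plan is to exhibit an explicit nine-element subset of $L$, show it is a sublattice, and identify it with $L_{15}$ via the Hasse diagram in Figure 3. We already have six of the nine elements, namely $a_1, a_2, a_3, b_1, b_2, b_3$; the remaining three should be the ``joins'' and ``meets'' forced by the hypotheses. Concretely I would set $t = a_2 \vee b_2 = a_3 \vee b_3$ (the top), $m = a_2 \wedge b_2 = a_1 \wedge b_1$ (the bottom), and introduce one middle element $c = a_1 \vee b_1$ (or dually $a_3 \wedge b_3$). First I would record the easy order relations: from $a_1 < b_3$ and $b_1 < a_3$ together with the chain relations we get $a_1, b_1 \le c$, and one checks $c \le a_3 \wedge b_3$; I would argue $c < a_3$ and $c < b_3$ strictly (if, say, $c = a_3$ then $b_1 \le a_3 = a_1 \vee b_1$, and combined with the incomparabilities one should derive a contradiction, or at least that the candidate element count drops — I would need to handle the degenerate possibilities carefully here). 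Similarly $m < a_1$ and $m < b_1$ strictly, and $t > a_3$, $t > b_3$ strictly, these following from the given incomparabilities $a_2 \parallel b_2$ etc.

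Next I would verify closure under $\vee$ and $\wedge$ for the nine-element set $S = \{m, a_1, a_2, a_3, b_1, b_2, b_3, c, t\}$. The key joins to compute are $a_i \vee b_j$ for the various $i,j$: using $a_2 \vee b_2 = t$ and monotonicity, $a_i \vee b_j = t$ whenever $i \ge 2$ or $j \ge 2$, while $a_1 \vee b_1 = c$ by definition; joins within a chain are trivial, and $a_i \vee c$, $b_j \vee c$ reduce to elements already listed since $c \le a_3 \wedge b_3$. Dually for meets, using $a_2 \wedge b_2 = m$. The only subtle points are computing $c \vee a_1 = a_1$-type reductions and confirming that $c \wedge b_3$, $c \vee m$, etc., stay in $S$; I would also need $a_3 \wedge b_3 = c$ and $a_1 \vee b_1 = c$ to be consistent, i.e. that the ``middle'' element is genuinely a single element — this is where the hypothesis that $L$ has \emph{no doubly reducible elements} enters, and it is the main obstacle. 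Indeed, a priori $a_1 \vee b_1$ and $a_3 \wedge b_3$ could be different; but if both were present we would have a ten-element configuration, and I expect to show that $a_3 \wedge b_3$ being doubly reducible (it equals $a_3 \wedge b_3$ with $a_3 \parallel b_3$, and if it also equalled a nontrivial join we'd contradict no-doubly-reducible) forces $a_3 \wedge b_3 = a_1 \vee b_1$, or else I route around this by taking $c := a_1 \vee b_1$ and proving directly $c = a_3 \wedge b_3$ using semidistributivity-free, purely order-and-no-doubly-reducible reasoning.

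Once closure is established, I would check that the induced order on $S$ matches $L_{15}$: $m$ at the bottom, covered by $a_1$ and $b_1$; then $a_1 < a_2 < a_3$ and $b_1 < b_2 < b_3$ as two chains, with $a_1, b_1 < c < a_3, b_3$ crosswise, and $t$ at the top above $a_3$ and $b_3$, with $a_2, b_2$ hanging off the sides as in the diagram (the element $c$ plays the role of the central element of $L_{15}$ lying on the ``spindle'' between the two side chains). I would confirm all nine elements are distinct: $a_i, b_j$ distinct is given; $m$ is strictly below all of $a_1, b_1$ and hence distinct from everything above; $t$ strictly above $a_3, b_3$; and $c$ is strictly between, distinct from $a_i, b_j$ because $c \parallel a_2$ and $c \parallel b_2$ (these incomparabilities I would deduce from $a_2 \parallel b_1$, $a_1 \parallel b_2$ and the chain structure) and $c \neq a_1, b_1, a_3, b_3$ by the strictness arguments above. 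Finally, the map sending this labelled poset to the correspondingly labelled vertices of $L_{15}$ is an order isomorphism between finite lattices, hence a lattice isomorphism, completing the proof.
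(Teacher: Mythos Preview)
Your proposal has a genuine gap at the very first step: $L_{15}$ has \emph{ten} elements, not nine. In the Hasse diagram there are two distinct central elements, one above the other, sitting between the two three-element chains; in your notation these would correspond to $a_3 \wedge b_3$ and $a_1 \vee b_1$. You try to collapse them into a single element $c$ and invoke the no-doubly-reducible hypothesis to force $a_1 \vee b_1 = a_3 \wedge b_3$, but that hypothesis does exactly the opposite. One checks (as the paper does) that $a_1 \parallel b_1$ and $a_3 \parallel b_3$; hence if $a_1 \vee b_1 = a_3 \wedge b_3$ this common value would be simultaneously a join and a meet of incomparable pairs, i.e.\ doubly reducible. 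The correct conclusion is $a_1 \vee b_1 < a_3 \wedge b_3$, yielding two distinct middle elements --- which is precisely the shape of $L_{15}$.

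Even after fixing the element count, your closure argument does not go through with the raw elements $a_1,a_3,b_1,b_3$. Your claim that ``$a_i \vee b_j = t$ whenever $i \ge 2$ or $j \ge 2$'' is unjustified: from $b_1 < b_2$ you only get $a_2 \vee b_1 \le a_2 \vee b_2 = t$, and nothing forces equality; dually, there is no reason $a_2 \wedge b_3$ should equal $a_1$. This is why the paper does \emph{not} use $a_1,a_3,b_1,b_3$ in the final ten-element set. Instead it replaces them by derived elements $a_1' = a_2 \wedge b_3$, $b_1' = a_3 \wedge b_2$, $a_3'' = a_2 \vee b_1'$, $b_3'' = a_1' \vee b_2$, chosen precisely so that the required meets and joins (e.g.\ $a_2 \wedge b_3'' = a_1'$, $a_2 \vee (a_1' \vee b_1') = a_3''$, etc.) hold by construction. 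The no-doubly-reducible hypothesis is then used a second time, on the pair $a_1' \vee b_1'$ versus $a_3'' \wedge b_3''$, to separate the two middle elements. Your outline is missing this replacement step entirely, and without it the candidate set is not a sublattice.
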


\begin{proof} If $a_1 \leq b_1$, then $a_1 \leq b_1 \leq b_2$, contradicting the assumption that $a_1 \parallel b_2$. So $a_1 \nleq b_1$. By symmetry, $b_1 \nleq a_1$, $a_3 \ngeq b_3$, and $b_3 \ngeq a_3$. Hence, $a_1 \parallel b_1$ and $a_2 \parallel b_2$. Let $a_1' = a_2 \wedge b_3$ and let $b_1' = a_3 \wedge b_2$. Since $a_2 \parallel b_3$, it follows that $a_1' < a_2$ and since $a_3 \parallel b_2$, it follows that $b_1' < b_2$. Moreover, $a_1' \parallel b_2$ because $a_1' \leq b_2$ implies that
$$a_1 \leq a_1' = a_1' \wedge b_2 \leq a_2 \wedge b_2 = a_1 \wedge b_1 < a_1$$
which is impossible, and $a_1' \geq b_2$ implies that $a_2 \geq a_1' \geq b_2$, contrary to the assumption that $a_2 \parallel b_2$. By symmetry, $a_2 \parallel b_1'$. If $a_1' \geq b_1'$, then $b_1' \leq a_1' \leq a_2$. But then, $b_1' \leq a_2'$, which is impossible. So $a_1' \ngeq b_1'$. By symmetry, $a_1' \nleq b_1'$. Hence, $a_1' \parallel b_1'$. Lastly, as $a_1 \leq a_1' \leq a_2$ and as $b_1 \leq b_1' \leq b_2$, $a_2 \wedge b_2 = a_1' \wedge b_1'$. Next, let $a_3'' = a_2 \vee b_1'$ and let $b_3'' = a_1' \vee b_2$. The dual of the above argument implies that $a_2 < a_3''$, $b_2 < b_3''$, $a_2 \parallel b_3''$, $a_3'' \parallel b_2$, $a_3'' \parallel b_3''$, and $a_2 \vee b_2 = a_3'' \vee b_3''$. Since $a_1' \leq b_3'' \leq b_3$ and $a_1' = a_2 \wedge b_3$, $a_1' = a_2 \wedge b_3''$. Moreover, since $b_1' \leq a_3'' \leq a_3$ and $b_1' = a_3 \wedge b_2$, $b_1' = a_3'' \wedge b_2$. By symmetry, $a_3'' = a_2 \vee b_1'$ and $b_3'' = a_1' \vee b_2$. \\

The lattice $L$ has no doubly reducible elements, so, as $a_1' \parallel b_1'$ and $a_3'' \parallel b_3''$, $a_1' \vee b_1' < a_3'' \wedge b_3''$. If $a_2 \geq a_1' \vee b_1'$, then $a_2 \geq a_1' \vee b_1' > b_1'$, which is impossible, and if $a_2 \leq a_1' \vee b_1'$, then $a_2 \leq a_1' \vee b_1' < a_3'' \wedge b_3'' < b_3''$, which is also impossible. Hence, $a_2 \parallel a_1' \vee b_1'$. Similarly, $a_2 \parallel a_3'' \wedge b_3''$. By symmetry, it follows that $a_1' \vee b_1' \parallel b_2$ and that $a_3'' \wedge b_3'' \parallel b_2$. Furthermore, as $a_1' < a_1' \vee b_1' < a_3'' \wedge b_3'' < b_3''$ and $a_1' = a_2 \wedge b_3''$, $a_1' = a_2 \wedge (a_1' \vee b_1') = a_2 \wedge (a_3'' \wedge b_3'')$. By symmetry, and the facts that $a_3'' = a_2 \vee b_1'$, $b_1' = a_3'' \wedge b_2$, and $b_3'' = a_1' \vee b_2$, it follows that $a_3'' = a_2 \vee (a_1' \vee b_1') = a_2 \vee (a_3'' \wedge b_3'')$, $b_1' = b_2 \wedge (a_1' \vee b_1') = b_2 \wedge (a_3'' \wedge b_3'')$, and $b_3'' = b_2 \vee (a_1' \vee b_1') = b_2 \vee (a_3'' \wedge b_3'')$. Hence, $L$ contains the sublattice depicted below, and it is isomorphic to $L_{15}$.

\begin{center}
\begin{tikzpicture}
\node (a2b2t) at (0,0) {$a_2 \vee b_2$};
\node (a3) at (-1.5,-1) {$a_3''$};
\node (b3) at (1.5,-1) {$b_3''$};
\node (a3b3) at (0,-2) {$a_3'' \wedge b_3''$};

\node (a1b1) at (0,-3) {$a_1' \vee b_1'$};
\node (a1) at (-1.5,-4) {$a_1'$};
\node (b1) at (1.5,-4) {$b_1'$};
\node (a2b2b) at (0,-5) {$a_2 \wedge b_2$};

\node (a2) at (-2.5,-2.5) {$a_2$};
\node (b2) at (2.5,-2.5) {$b_2$};

\draw (a2b2b) -- (a1) -- (a2) -- (a3) -- (a2b2t) -- (b3) -- (b2) -- (b1) -- (a2b2b);
\draw (a3) -- (a3b3) -- (a1b1) -- (b1);
\draw (a1) -- (a1b1);
\draw (a3b3) -- (b3);
\end{tikzpicture}
\end{center}

\end{proof}

\section{Atoms and coatoms}\label{sec:atoms and coatoms}

A consequence of Galvin and J\'onsson's classification of distributive sublattices of free lattices is that in such a lattice, every antichain that has three elements satisfies some very strong conditions. In this section, we prove the first main result of this paper by proving a structural property of sublattices of a free lattice that are in the variety generated by $N_5$ by proving that the sublattices of free lattices that we are interested in satisfy a property similar to the above conditions. \\

We now state and prove the first structural property of this paper.

\begin{theorem}\label{cube theorem} Let $L \in \mathcal{N}$, and assume that $L$ is isomorphic to a sublattice of a free lattice. Furthermore, assume that $Y$ is an antichain in $L$. If there is an element $d \in L$ such that $a \wedge b = d$ for all distinct elements $a, b \in Y$, then $|Y| \leq 3$ and at most two elements of $Y$ do not cover $d$ in $L$. Moreover, if there is an element $d \in L$ such that $a \vee b = d$ for all distinct elements $a, b \in Y$, then $|Y| \leq 3$ and at most two elements of $Y$ are not covered by $d$ in $L$.
\end{theorem}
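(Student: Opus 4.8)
The plan is to prove the statement for meets; the join case follows by lattice duality (noting that the hypotheses and the class "sublattices of a free lattice that lie in $\mathcal{N}$" are self-dual, since $N_5$ is self-dual and hence $\mathcal{N}$ is closed under order duals, and Whitman's condition is self-dual). So assume $Y$ is an antichain and $d \in L$ with $a \wedge b = d$ for all distinct $a,b \in Y$. First I would dispatch the bound $|Y| \leq 3$: if $a_1, a_2, a_3, a_4$ were four distinct elements of $Y$, I would show their pairwise joins, together with $d$ and the four elements themselves, generate a forbidden configuration — most naturally one should check that $\{d, a_1, a_2, a_3, a_1\vee a_2, a_1\vee a_3, a_2 \vee a_3, a_1\vee a_2 \vee a_3\}$ together with the extra point $a_4$ forces something outside $\mathcal{N}$. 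The cleaner route is probably: since $L$ is semidistributive with no doubly reducible elements and lies in $\mathcal{N}$, an antichain bounded below by a common meet $d$ behaves like an antichain in a distributive lattice; Galvin–J\'onsson (Theorem \ref{Galvin and Jonsson}) says that in a distributive sublattice of a free lattice, a three-element antichain with common meet must be (part of) a copy of the three-atom Boolean algebra, which has exactly three atoms — so four is impossible there, and the $N_5$-generated case should reduce to this via the structure theory. I expect the actual argument to run through McKenzie's list: a would-be counterexample with $|Y| = 4$, or with all three elements of a $3$-element $Y$ failing to cover $d$, produces a sublattice isomorphic to one of the semidistributive lattices $L_6, \dots, L_{15}$ (or to $M_3$), contradicting $L \in \mathcal{N}$ and Theorem \ref{Jonsson Rival}.

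For the covering refinement, suppose $Y = \{a_1, a_2, a_3\}$ with common meet $d$, and suppose at least two of them, say $a_1$ and $a_2$, fail to cover $d$; I must derive a contradiction. Pick $d < a_1' < a_1$ and $d < a_2' < a_2$. The key observation is that $a_1' \wedge a_2' = d$ (since $a_1' \wedge a_2' \leq a_1 \wedge a_2 = d$), $a_1' \wedge a_3 = d$, $a_2' \wedge a_3 = d$, and $a_1', a_2', a_3$ are pairwise incomparable (incomparability of $a_i'$ with $a_j'$ and with $a_3$ follows from the antichain hypothesis on $Y$ plus $a_i' < a_i$). Now I have a $3$-chain $d < a_1' < a_1$, a $3$-chain $d < a_2' < a_2$, with $a_1' \parallel a_2$, etc.; this is exactly the kind of data Lemma \ref{L15} is designed to consume — after passing to the dual side and choosing joins $a_1 \vee a_2$, $a_1' \vee a_2'$ appropriately to match the hypotheses $a_2 \vee b_2 = a_3 \vee b_3$ and $a_2 \wedge b_2 = a_1 \wedge b_1$, Lemma \ref{L15} yields a copy of $L_{15}$ inside $L$, contradicting $L \in \mathcal{N}$ since $L_{15} \notin \mathcal{N}$ (as $\mathcal{L}_{15}$ covers $\mathcal{N}$ and $L_{15} \notin \mathcal{L}_j$ for $j \neq 15$). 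Getting the hypotheses of Lemma \ref{L15} to hold on the nose — in particular arranging the two equalities relating joins and meets of the six chosen elements — will require some care in choosing the intermediate elements and possibly replacing $a_i'$ by $a_i' \vee (\text{something})$ or $a_i \wedge (\text{something})$; semidistributivity is what makes such adjustments consistent.

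The main obstacle, as I see it, is the second assertion (the covering refinement), not the cardinality bound: producing a clean instance of Lemma \ref{L15}'s hypotheses from "two of the $a_i$ don't cover $d$" is the technical heart, and it is conceivable that a single application of Lemma \ref{L15} is not enough and one instead has to argue case-by-case against several of $L_6, \dots, L_{15}$. A secondary subtlety is making sure the reduction to the distributive case (for $|Y| \leq 3$) is legitimate: $L \in \mathcal{N}$ does not make $L$ distributive, so I cannot literally cite Theorem \ref{Galvin and Jonsson}; instead I should treat the configuration $\{d\} \cup Y \cup \{a_i \vee a_j : i \neq j\} \cup \{\bigvee Y\}$ directly, check it is a sublattice, verify it cannot be $M_3$ (not semidistributive) and, if it has four atoms over $d$, show it must contain one of the listed subdirectly irreducibles — here I would lean on the fact, established earlier in the paper or derivable from Whitman's condition, that $\bigvee Y$ cannot be doubly reducible, which already kills the four-atom Boolean configuration and should be leveraged to push the contradiction through.
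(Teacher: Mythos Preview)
Your proposal has a genuine misreading that derails the covering argument, and it inverts the logical structure the paper actually uses.

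\textbf{The covering claim.} You write: ``suppose at least two of them, say $a_1$ and $a_2$, fail to cover $d$; I must derive a contradiction.'' But the theorem asserts that \emph{at most two} elements of $Y$ fail to cover $d$; the contradiction hypothesis is therefore that \emph{all three} fail to cover $d$, not merely two. The paper accordingly picks intermediate elements $a', b', c'$ with $d < a' < a$, $d < b' < b$, $d < c' < c$. Your two-point configuration $d < a_1' < a_1$, $d < a_2' < a_2$ does not even give six distinct elements for Lemma~\ref{L15} (the bottoms of both chains coincide at $d$), and the lemma's equalities $a_2 \vee b_2 = a_3 \vee b_3$, $a_2 \wedge b_2 = a_1 \wedge b_1$ are not available. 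The paper first replaces $a,b,c$ by $a_* = (a\vee b)\wedge(c\vee a)$, $b_*$, $c_*$, showing these generate a copy of $\textbf{2}\times\textbf{2}\times\textbf{2}$ over $d$; only then does it run a genuine case analysis on the positions of $a' \vee b_*$, $b' \vee a_*$, etc., invoking $L_{13}$, then Lemma~\ref{L15} (yielding $L_{15}$), then Whitman's condition directly, then $L_{12}$, in separate subcases. A single clean shot at Lemma~\ref{L15} does not exist here; you correctly anticipate that ``a single application \dots\ is not enough,'' but the proposal does not supply the missing cases.

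\textbf{The bound $|Y|\le 3$.} Your plan treats this first and independently, but the paper proves it \emph{after} the covering result, by a bootstrap you do not mention: given $a_1,a_2,a_3,a_4$, the three elements $a_2\vee a_3$, $a_3\vee a_4$, $a_4\vee a_2$ form an antichain with common join $a_2\vee a_3\vee a_4$; using the $\textbf{2}^3$ structure from the first part together with the absence of doubly reducible elements, one shows none of these three is covered by their common join, contradicting the already-established dual covering result. Your direct attempts (reduce to Galvin--J\'onsson, or ``kill the four-atom Boolean configuration'' via double reducibility) do not work as stated: $L$ is not distributive, and the sublattice generated by four such atoms need not be $\textbf{2}^4$, so ruling out $\textbf{2}^4$ alone is insufficient.
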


A stronger form of Theorem \ref{cube theorem} is satisfied by distributive sublattices of free lattices.

\begin{example} By Theorem \ref{Galvin and Jonsson}, we have the following. Let $L$ be a distributive sublattice of a free lattice, and let $Y$ be an antichain in $L$. Then, $|Y| \leq 3$. Moreover, if there is an element $d \in L$ such that $a \wedge b = d$ for all distinct elements $a, b \in Y$, then $|Y| = 2$ implies that at most one element of $Y$ does not cover $d$ in $L$, and $|Y| = 3$ implies that every element of $Y$ covers $d$ in $L$. Furthermore, if there is an element $d \in L$ such that $a \vee b = d$ for all distinct elements $a, b \in Y$, then $|Y| = 2$ implies that at most one element of $Y$ is not covered by $d$ in $L$, and $|Y| = 3$ implies that every element of $Y$ is covered by $d$ in $L$.
\end{example}

For an additional comparison, we note the following.

\begin{remark} A weaker form of Theorem \ref{cube theorem} holds for all sublattices of free lattices. Baldwin, Berman, Glass, and Hodges  \cite{Baldwin et al} used a special case of Erd\H{o}s and Rado's $\Delta$-system Lemma \cite{Erdos and Rado} to prove that if $L$ is a sublattice of a free lattice and if $Y$ is an antichain in $L$ such that for some $d \in L$, $a \wedge b = d$ for all distinct $a, b \in Y$, then $Y$ is countable.
\end{remark}

We now prove Theorem \ref{cube theorem}.

\begin{proof} Firstly, assume that $Y = \{a, b, c\}$ is an antichain in $L$ for some distinct elements $a,b,c \in L$. If $a \vee b = c \vee a$ and $a \vee b \neq b \vee c$, then, as $b \vee c \leq a \vee b \vee c = (a \vee b) \vee (c \vee a) = a \vee b$, $a \vee b > b \vee c$. So, if $a \vee b = b \vee c$, $b \vee c = c \vee a$, or $c \vee a = a \vee b$, then assume without loss of generality that $a \vee b \geq b \vee c$ and $a \vee b \geq c \vee a$. Since $a \vee b \geq b \vee c$, $c \wedge (a \vee b) \geq c \wedge (b \vee c) = c$. So, $c \wedge (a \vee b) = c$. But as $L$ is semidistributive and as $c \wedge a = c \wedge b = d$, that is impossible. Hence, assume without loss of generality that $a \vee b \neq b \vee c$, $b \vee c \neq c \vee a$, and $c \vee a \neq a \vee b$. Then
$$(a \vee b) \vee (c \vee a) = (a \vee b) \vee (b \vee c) = (c \vee a) \vee (b \vee c) = a \vee b \vee c.$$

Let $a_* = (a \vee b) \wedge (c \vee a)$, let $b_* = (a \vee b) \wedge (b \vee c)$, and let $c_* = (c \vee a) \wedge (b \vee c)$. Since $a_* \geq a$, $b_* \geq b$, and $c_* \geq c$, $a_* \vee b_* = a \vee b$, $b_* \vee c_* = b \vee c$, $a_* \vee c_* = a \vee c$, and $a_* \vee b_* \vee c_* = a \vee b \vee c$. Since $L$ is semidistributive, the fact that $c \wedge a = c \wedge b = d$ implies that $c \wedge (a \vee b) = d$. So, as $a_* \leq a \vee b$, $d = a \wedge c \leq a_* \wedge c \leq (a \vee b) \wedge c = d$ and it follows that $a_* \wedge c = d$. By symmetry, $a_* \wedge b = d$, so as $L$ is semidistributive, $a_* \wedge (b \vee c) = d$. Hence, as $b_* \leq b \vee c$, it follows that $a_* \wedge b_* = d$. By symmetry, $b_* \wedge c_* = d$ and $c_* \wedge a_* = d$. Since $L$ is semidistributive, since $a_* \wedge b = d$, and since $a_* \wedge c = d$, $a_* \wedge (b \vee c) = d$. By symmetry, $b_* \wedge (c \vee a) = d$ and $c_* \wedge (a \vee b) = d$. Lastly, $a_* \vee b_* = a \vee b$, $b_* \vee c_* = b \vee c$, and $c_* \vee a_* = c \vee a$. Hence, the sublattice of $L$ generated by $\{a_*, b_*, c_*, a \vee b, b \vee c, c \vee a, d\}$ is isomorphic to the three atom Boolean algebra $\textbf{2} \times \textbf{2} \times \textbf{2}$. \\

Suppose for a contradiction that $a$ does not cover $d$ in $L$, that $b$ does not cover $d$ in $L$, and that $c$ does not cover $d$ in $L$. Then there exist elements $a', b', c' \in L$ such that $d < a' < a$, $d < b' < b$, and $d < c' < c$. In particular, $d < a' < a_*$, $d < b' < b_*$, and $d < c' < c_*$. If $a' \leq b_*$, then $a' \leq b_* \wedge a_* = d$, which is impossible. Moreover, if $a' \geq b_*$, then $b_* \leq a' < a_*$, which is also impossible. So $a' \parallel b_*$. By symmetry, $a' \parallel c_*$, $b' \parallel a_*$, $b' \parallel c_*$, $c' \parallel a_*$, and $c' \parallel b_*$. As $L$ is semidistributive, the fact that $a' \wedge b_* = a' \wedge c_* = d$ implies that $a' \wedge (b_* \wedge c_*) = d$. So $a' \parallel b_* \vee c_*$. By symmetry, $b' \parallel c_* \vee a_*$ and $c' \parallel a_* \vee b_*$. \\

If $a' \vee b_* = a_* \vee b_*$ and $a' \vee c_* = a_* \vee c_*$, then $a' \vee (b_* \vee c_*) = a_* \vee b_* \vee c_*$, implying, as $a' \leq a_*$, that the sublattice of $L$ generated by $\{a_*, b_*, c_*, d, a'\}$ is isomorphic to $L_{13}$. But that is impossible because $L_{13} \notin \mathcal{N}$. Hence, $a' \vee b_* < a_* \vee b_*$ or $a' \vee c_* < a_* \vee c_*$. By symmetry, it follows that $b' \vee c_* < b_* \vee c_*$ or $b' \vee a_* < b_* \vee a_*$, and it follows that $c' \vee a_* < c_* \vee a_*$ or $c' \vee b_* < c_* \vee b_*$. Therefore, it is enough to consider the following. \\

If $a' \vee b_* < a_* \vee b_*$ and $b' \vee a_* < a_* \vee b_*$, then consider the following. As $a_* \parallel b'$ and $a' \parallel b_*$, the inequalities $a' < a_* < a_* \vee b'$, $b' < b_* < a' \vee b_*$, $a' < a' \vee b_*$, and $b' < a_* \vee b'$ hold. Moreover, $a_* \vee b_* = (a_* \vee b') \vee (a' \vee b_*)$ and $a_* \wedge b_* = a' \wedge b'$. So it is enough to note the following. If $a_* \geq b'$, then $b' \leq a_* \wedge b_* = d$, but that is impossible. So $a_* \ngeq b'$. Moreover, if $a_* \leq b'$, then $a_* \leq b' < b_*$, but that is impossible as $a_* \parallel b_*$. So $a_* \nleq b'$. Hence, $a_* \parallel b'$. If $a_* \leq a' \vee b_*$, then $a_* \vee b_* \leq (a' \vee b_*) \vee b_* = a' \vee b_* \leq a_* \vee b_*$, implying that $a' \vee b_* = a_* \vee b_*$. But that is contrary to the fact that $a' \vee b_* < a_* \vee b_*$. So $a_* \nleq a' \vee b_*$. If $a_* \geq a' \vee b_*$, then $a_* \geq a' \vee b_* \geq b_*$, which is impossible. So $a_* \ngeq a' \vee b_*$. Hence, $a_* \parallel a' \vee b_*$. By symmetry, $a' \parallel b_*$ and $a_* \vee b' \parallel b_*$. Hence, as $L$ has no doubly reducible elements, $a'$, $a_*$, $a_* \vee b'$, $b'$, $b_*$, and $a' \vee b_*$ satisfy the hypothesis of Lemma \ref{L15}. This is depicted below, recall that $a_* \vee b_* = a \vee b$ and that $a_* \wedge b_* = d$.

\begin{center}
\begin{tikzpicture}[scale=1.5]
\node (B) at (-1.5,0) {$a \vee b$};
\node (E) at (-1.5,-2) {$a_*$};
\node (F) at (0.5,-1) {$b_*$};
\node (H) at (0.5,-3) {$d$};
\node (B2) at (-1.5,-1) {$a_* \vee b'$};
\node (H2) at (0.5,-2) {$b'$};
\node (B1) at (-0.5,-0.5) {$a' \vee b_*$};
\node (H1) at (-0.5,-2.5) {$a'$};

\draw (B1) -- (H1);
\draw (B2) -- (H2);
\draw (B) -- (B2) -- (E) -- (H1) -- (H) -- (H2) -- (F) -- (B1) -- (B);
\end{tikzpicture}
\end{center}
 
So by Lemma \ref{L15}, $L$ has a sublattice that is isomorphic to $L_{15}$. But that is impossible because $L_{15} \notin \mathcal{N}$. So it is impossible for $a' \vee b_* < a_* \vee b_*$ and $b' \vee a_* < a_* \vee b_*$. \\

If $a' \vee b_* < a_* \vee b_*$ and $b' \vee c_* < b_* \vee c_*$, then $a' \vee b' = a' \vee b_*$, $(a' \vee b_*) \vee (b \vee c) < a \vee b \vee c$, or $a' \vee b' < a' \vee b_*$ and $(a' \vee b_*) \vee (b \vee c) = a \vee b \vee c$. \\

If $a' \vee b' = a' \vee b_*$, then $a'$, $b'$, $a \vee b$, and $b \vee c$ violate Whitman's Condition. This is because, as indicated in the following diagram, $(a \vee b) \wedge (b \vee c) = b_* < a' \vee b'$, $(a \vee b) \wedge (b \vee c) \nleq a'$, $(a \vee b) \wedge (b \vee c) \nleq b'$, $a' \vee b' \ngeq a \vee b$, and, as $a' \vee b' \geq b \vee c$ would imply $a \vee b \geq a' \vee b' \geq b \vee c$, $a' \vee b' \ngeq b \vee c$.

\begin{center}
\begin{tikzpicture}[scale=1.5]
\node (B) at (-1.5,0) {$a \vee b$};
\node (E) at (-1.5,-2) {$a_*$};
\node (F) at (0.5,-1) {$b_*$};
\node (H) at (0.5,-3) {$d$};
\node (H2) at (0.5,-2) {$b'$};
\node (B1) at (-0.5,-0.5) {$a' \vee b'$};
\node (H1) at (-0.5,-2.5) {$a'$};
\node (D) at (2.5,0) {$b \vee c$};
\node (G) at (2.5,-2) {$c_*$};
\node (DG) at (2.5,-1) {$b' \vee c_*$};

\draw (B1) -- (H1);
\draw (H2) -- (DG);
\draw (B) -- (E) -- (H1) -- (H) -- (H2) -- (F) -- (B1) -- (B);
\draw (F) -- (D) -- (DG) -- (G) -- (H);
\end{tikzpicture}
\end{center}

If $(a' \vee b_*) \vee (b \vee c) < a \vee b \vee c$, then consider the following. The inequalities $a' < a_* < a \vee b$, $b_* < b \vee c < (a' \vee b_*) \vee (b \vee c)$, $a' < (a' \vee b_*) \vee (b \vee c)$ and $b_* < a \vee b$ hold. Moreover, the equalities $a_* \wedge (b \vee c) = a' \wedge b_* = d$ and $a_* \vee (b \vee c) = (a \vee b) \vee ((a' \vee b_*) \vee (b \vee c)) = a \vee b \vee c$ hold. Furthermore, $a_* \parallel b_*$ and $a \vee b \parallel b \vee c$. If $a' \geq b \vee c$, then $a_* \geq a' \geq b \vee c \geq b_*$, which is impossible. So $a' \ngeq b \vee c$. If $a' \leq b \vee c$, then $a' \leq a_* \wedge (b \vee c) = d$, which is impossible. So $a' \ngeq b \vee c$, and it follows that $a' \parallel b \vee c$. If $a_* \geq (a' \vee b_*) \vee (b \vee c)$, then $a_* \geq (a' \vee b_*) \vee (b \vee c) \geq b_*$, which is impossible. So $a_* \ngeq (a' \vee b_*) \vee (b \vee c)$. If $a_* \leq (a' \vee b_*) \vee (b \vee c)$, then $a_* \vee b_* \vee c_* \leq a_* \vee (b \vee c) \leq a_* \vee ((a' \vee b_*) \vee (b \vee c)) = (a' \vee b_*) \vee (b \vee c) < a \vee b \vee c$. But that is impossible, so $a_* \nleq (a' \vee b_*) \vee (b \vee c)$. Hence, $a_* \parallel (a' \vee b_*) \vee (b \vee c)$. It follows that $a'$, $a_*$, $a \vee b$, $b_*$, $b \vee c$, and $(a' \vee b_*) \vee (b \vee c)$ satisfy the hypothesis of Lemma \ref{L15}. This is depicted below. But then, by Lemma \ref{L15}, $L$ contains a sublattice isomorphic to $L_{15}$. That is impossible because $L_{15} \notin \mathcal{N}$.
\begin{center}
\begin{tikzpicture}[scale=1.5]
\node (B) at (-1.5,0) {$a \vee b$};
\node (E) at (-1.5,-2) {$a_*$};
\node (F) at (0.5,-1) {$b_*$};
\node (H) at (0.5,-3) {$d$};
\node (H2) at (0.5,-2) {$b'$};
\node (B1) at (-0.5,-0.5) {$a' \vee b_*$};
\node (H1) at (-0.5,-2.5) {$a'$};
\node (D) at (2.5,0) {$b \vee c$};
\node (G) at (2.5,-2) {$c_*$};
\node (DG) at (2.5,-1) {$b' \vee c_*$};
\node (T) at (0.5,1) {$a \vee b \vee c$};
\node (TT) at (1.5, 0.5) {$(a' \vee b_*) \vee (b \vee c)$};

\draw (B1) -- (TT) -- (D);
\draw (B) -- (T) -- (TT);

\draw (B1) -- (H1);
\draw (H2) -- (DG);
\draw (B) -- (E) -- (H1) -- (H) -- (H2) -- (F) -- (B1) -- (B);
\draw (F) -- (D) -- (DG) -- (G) -- (H);
\end{tikzpicture}
\end{center}

If $a' \vee b' < a' \vee b_*$ and $(a' \vee b_*) \vee (b \vee c) = a \vee b \vee c$, then we can assume without loss of generality that $a_* \wedge (a' \vee b_*) = a'$ by setting $a' = a_* \wedge (a' \vee b_*)$. This is because $a' \vee b_* < a_* \vee b_*$, implying that $a_* > a_* \wedge (a' \vee b_*) > d$, and because $a' \vee b_* \leq (a_* \wedge (a' \vee b_*)) \vee b_* \leq a' \vee b_*$, implying that $(a_* \wedge (a' \vee b_*)) \vee b_* = a' \vee b_*$. Let $b'' = (a' \vee b') \wedge b_*$. Then, as indicated in the following diagram, the sublattice of $L$ generated by 
$$ \{a_*, a', d, a' \vee b', b'', a \vee b, a' \vee b_*, b_*, a \vee b \vee c, b \vee c\} $$ 
is isomorphic to $L_{12}$. But that is impossible because $L_{12} \notin \mathcal{N}$. So, it is impossible for $a' \vee b_* < a_* \vee b_*$ and $b' \vee c_* < b_* \vee c_*$. \\

\begin{center}
\begin{tikzpicture}[scale=1.5]
\node (B) at (-1.5,0) {$a \vee b$};
\node (E) at (-1.5,-2) {$a_*$};
\node (F) at (0.5,-1) {$b_*$};
\node (H) at (0.5,-3) {$d$};
\node (H2) at (0.5,-2.2) {$b'$};
\node (B1) at (-0.5,-0.5) {$a' \vee b_*$};
\node (H1) at (-0.5,-2.5) {$a'$};
\node (D) at (2.5,0) {$b \vee c$};
\node (G) at (2.5,-2) {$c_*$};
\node (DG) at (2.5,-1.2) {$b' \vee c_*$};
\node (H3) at (0.5, -1.6) {$b''$};
\node (B3) at (-0.5,-1.1) {$a' \vee b'$};
\node (T) at (0.5,1) {$a \vee b \vee c$};

\draw (B3) -- (H3);
\draw (B1) -- (B3) -- (H1);
\draw (H2) -- (DG);
\draw (B) -- (E) -- (H1) -- (H) -- (H2) -- (H3) -- (F) -- (B1) -- (B);
\draw (F) -- (D) -- (DG) -- (G) -- (H);
\draw (B) -- (T) -- (D);
\end{tikzpicture}
\end{center}

Therefore, from the above analysis, it follows that $a$ covers $d$ in $L$, $b$ covers $d$ in $L$, or $c$ covers $d$ in $L$. \\

Secondly, assume that $Y = \{a, b, c\}$ is an antichain in $L$ for some distinct elements $a,b,c \in L$ and assume that $a \vee b = b \vee c = c \vee a = d$ for some $d \in L$. Since $L_2$ is not semidistributive, since $L_{11} \notin \mathcal{N}$, and since $L_{14} \notin \mathcal{N}$, the above arguments imply by symmetry that $a$ is covered by $d$ in $L$, $b$ is covered by $d$ in $L$, or $c$ is covered by $d$ in $L$. \\

Thirdly, assume that $Y$ is an antichain in $L$ such that for some element $d \in L$, $a \wedge b = d$ for all distinct elements $a, b \in Y$. Suppose that $|Y| \geq 4$. Then let $a_1, a_2, a_3, a_4 \in Y$ be distinct elements. Consider the elements $a_2 \vee a_3$ and $(a_1 \vee a_2 \vee a_3) \wedge (a_2 \vee a_3 \vee a_4)$. By assumption, $a_2 \parallel a_3$. Suppose that $a_1 \vee a_2 \vee a_3 \geq a_2 \vee a_3 \vee a_4$. Using the above analysis for $\{a_1, a_2, a_3\}$
, and $\{a_2, a_3, a_4\}$, we see that
$$\{d, (a_1 \vee a_2) \wedge (a_3 \vee a_1), (a_1 \vee a_2) \wedge (a_2 \vee a_3), (a_3 \vee a_1) \wedge (a_2 \vee a_3), a_1 \vee a_2, a_3 \vee a_1, a_2 \vee a_3, a_1 \vee a_2 \vee a_3 \}$$
and
$$\{d, (a_2 \vee a_3) \wedge (a_4 \vee a_2), (a_2 \vee a_3) \wedge (a_3 \vee a_4), (a_4 \vee a_2) \wedge (a_3 \vee a_4), a_2 \vee a_3, a_4 \vee a_2, a_3 \vee a_4, a_2 \vee a_3 \vee a_4 \}$$
are sublattices of $L$ that are isomorphic to $\textbf{2} \times \textbf{2} \times \textbf{2}$. Hence, as $(a_2 \vee a_4) \wedge (a_3 \wedge a_4) \geq a_4$, $(a_2 \vee a_3) \wedge a_4 = d$. By assumption, $a_1 \wedge a_4 = d$. Hence, as $L$ is semidistributive, $(a_1 \vee a_2 \vee a_3) \wedge a_4 = (a_1 \vee (a_2 \vee a_3)) \wedge a_4 = d$. But that is impossible as, by our supposition, $a_1 \vee a_2 \vee a_3 \geq a_2 \vee a_3 \vee a_4 = a_4$, implying that $(a_1 \vee a_2 \vee a_3) \wedge a_4 = a_4$. So $a_1 \vee a_2 \vee a_3 \ngeq a_2 \vee a_3 \vee a_4$. By symmetry, $a_2 \vee a_3 \vee a_4 \ngeq a_1 \vee a_2 \vee a_3$. Hence, $a_1 \vee a_2 \vee a_3 \parallel a_2 \vee a_3 \vee a_4$. Therefore, because $L$ has no doubly reducible elements, $a_2 \vee a_3 < (a_1 \vee a_2 \vee a_3) \wedge (a_2 \vee a_3 \vee a_4)$. It follows, as $a_1 \vee a_2 \vee a_3 \parallel a_2 \vee a_3 \vee a_4$, that
$$a_2 \vee a_3 < (a_1 \vee a_2 \vee a_3) \wedge (a_2 \vee a_3 \wedge a_4) < a_2 \vee a_3 \vee a_4.$$
Hence, $a_2 \vee a_3 \vee a_4$ does not cover $a_2 \vee a_3$ in $L$. The situation is depicted by the following poset, where $b_1 = a_1 \vee a_2$, $b_2 = a_1 \vee a_3$, $b_3 = a_2 \vee a_4$, $b_4 = a_3 \vee a_4$, $c_1 = a_2 \vee a_3$, $c_2 = (a_1 \vee a_2 \vee a_3) \wedge (a_2 \vee a_3 \vee a_4)$, $d_1 = a_1 \vee a_2 \vee a_3$, and $d_2 = a_2 \vee a_3 \vee a_4$.

\begin{center}
\begin{tikzpicture}[scale=1.2]
\node (d) at (0,0) {$d$};
\node (a1) at (-2,1) {$a_1$};
\node (a2) at (-1,1) {$a_2$};
\node (a3) at (1,1) {$a_3$};
\node (a4) at (2,1) {$a_4$};
\node (b1) at (-2,2) {$b_1$};
\node (b2) at (-1,2) {$b_2$};
\node (c1) at (0,2) {$c_1$};
\node (b3) at (1,2) {$b_3$};
\node (b4) at (2,2) {$b_4$};
\node (c2) at (0,2.7) {$c_2$};
\node (d1) at (-1.5,3.2) {$d_1$};
\node (d2) at (1.5,3.2) {$d_2$};

\draw (a1) -- (d) -- (a4) -- (b4) -- (d2) -- (c2) -- (d1) -- (b1) -- (a1) -- (b2) -- (a3) -- (b4);
\draw (a1) -- (b2);
\draw (b3) -- (a4);
\draw (b3) -- (a2) -- (c1) -- (a3);
\draw (c1) -- (c2);
\draw (b1) -- (a2);
\draw (d1) -- (b2);
\draw (d2) -- (b3);
\draw (a2) -- (d);
\draw (a3) -- (d);
\end{tikzpicture}
\end{center}

By symmetry, $a_2 \vee a_3 \vee a_4$ does not cover $a_3 \vee a_4$ in $L$ and $a_2 \vee a_3 \vee a_4$ does not cover $a_4 \vee a_2$ in $L$. But as
$$(a_2 \vee a_3) \vee (a_3 \vee a_4) = (a_3 \vee a_4) \vee (a_4 \vee a_2) = (a_4 \vee a_2) \vee (a_2 \vee a_3) = a_2 \vee a_3 \vee a_4,$$
this contradicts the previous analysis. Hence, $|Y| \leq 3$. \\

Lastly, assume that $Y$ is an antichain in $L$ such that for some element $d \in L$, $a \vee b = d$ for all distinct elements $a, b \in Y$. Then the above arguments imply, by symmetry, that $|Y| \leq 3$. This completes the proof.
\end{proof}

\begin{remark} Note that $\mathcal{N}$ is a semidistributive variety, so every member of $\mathcal{N}$ is semidistributive. Moreover, in the proof of Theorem \ref{cube theorem}, we only made essential use of the assumptions that $L \in \mathcal{N}$ and that $L$ satisfies Whitman's condition. Hence, Theorem \ref{cube theorem} describes a property of lattices $L \in \mathcal{N}$ that satisfy Whitman's condition.
\end{remark}

\section{Perspective properties}\label{sec:perpective properties}

A well-known characterization of modular lattices \cite{ILO} states that a lattice $M$ is modular if and only if for all $a, b \in M$ satisfying $a \parallel b$, the maps $j_b : x \mapsto x \vee b$ and $m_a : y \mapsto y \wedge a$ are mutually inverse lattice isomorphisms between the following invervals in $M$, $[a \wedge b, a]$ and $[b, a \vee b]$. Moreover, as noted in Section \ref{sec:background}, a sublattice of a free lattice is modular if and only if it is distributive. \\

Motivated by these considerations, we prove the second and third main results of this paper by proving two structural properties of sublattices of a free lattice that are in the variety generated by $N_5$. They illustrate that if these sublattices of free lattice are structurally complex, then they satisfy properties that are similar to the above characterization of modular lattices and to the modular law in Definition \ref{modular} \\

A \emph{convex distributive sublattice} of a lattice $L$ is a lattice $K$ such that $K$ is a distributive sublattice of $L$ and $K$ is a convex sublattice of $L$. With this, we introduce the following notions. \\

\begin{definition} Let $K$ be a lattice. Then call a set partition $\mathcal{F}$ of the set of elements of $K$ \emph{distributive} if every element $F \in \mathcal{F}$ is a convex distributive sublattice of $K$ and, for all distinct elements $F_1, F_2 \in \mathcal{F}$, $F_1 \cup F_2$ is not a sublattice of $K$, $F_1 \cup F_2$ is not a convex subset of $K$, or $F_2 \cup F_2$ is a convex distributive sublattice of $K$.
\end{definition}

Every lattice $L$ has the distributive partition $\mathcal{F} = \{ \{a\} : a \in L \}$, as for all distinct elements $a, b \in L$, $\{a, b\}$ is an antichain or $\{a, b\}$ is the two element chain. However, in general, $\mathcal{F} = \{ \{a\} : a \in L \}$ is not the only distributive partition of a lattice $L$.

\begin{definition} Let $K$ be a lattice. Then define $\Dec(K)$ to be the minimum possible cardinality of a distributive partition of $K$.
\end{definition}

As every lattice $L$ has the distributive partition $\mathcal{F} = \{ \{a\} : a \in L \}$, $\Dec(L) \leq |L|$. A lattice $L$ satisfies $\Dec(L) = 1$ if and only if $L$ is distributive. This is because $L$ is distributive if and only if $\{L\}$ is distributive partition of $L$. Moreover, it is impossible for $\Dec(L) = 2$ because $\Dec(L) = 2$ implies that there is a distributive partition $\{L_1, L_2\}$ of $L$ such that $L_1$ is distributive, $L_2$ is distributive, but $L_1 \cup L_2 = L$ is a non-distributive sublattice of $L$, which is impossible. We give two examples of lattices $L$ that satisfy $\Dec(L) \geq 3$.

\begin{example} Consider the lattice $N_5$ with elements labelled below.
\begin{center}
\begin{tikzpicture}[scale=0.5]
\node (x1) at (0,0) {$x_1$};
\node (x2) at (-3,-3) {$x_2$};
\node (x3) at (2,-2) {$x_3$};
\node (x4) at (2,-4) {$x_4$};
\node (x5) at (0,-6) {$x_5$};

\draw (x1) -- (x2) -- (x5) -- (x4) -- (x3) -- (x1);
\end{tikzpicture}
\end{center}
This lattice satisfies $\Dec(N_5) = 3$. In fact, there are six distributive partitions $\mathcal{F}$ of $N_5$ that satisfy $|\mathcal{F}| = 3$. They are $\{\{x_1,x_2\}$, $\{x_3\}$, $\{x_4,x_5\}\}$, $\{\{x_1,x_3\}$, $\{x_4\},$ $\{x_2,x_5\}\}$, $\{\{x_1,x_2\},$ $\{x_3,x_4\},$ $\{x_5\}\}$, $\{\{x_1\}$, $\{x_2, x_5\}$, $\{x_3,x_4\} \}$, $\{\{x_1,x_3,x_4\}$, $\{x_2\}$, $\{x_5\}\}$, and $\{\{x_1\}$, $\{x_2\}$, $\{x_3,x_4,x_5\} \}$. To see that $\{\{x_1,x_2\}$, $\{x_3\}$, $\{x_4,x_5\}\}$ is a distributive partition of $\mathcal{F}$, note that $\{x_1, x_2\}$, $\{x_3\}$, and $\{x_4, x_5\}$ are all convex distributive sublattices of $N_5$. Moreover, note that $\{x_1, x_2\} \cup \{x_3\}$ is not a sublattice of $N_5$, $\{x_1, x_2\} \cup \{x_4, x_5\}$ is not a convex subset of $N_5$, and $\{x_3\} \cup \{x_4, x_5\}$ is a convex distributive sublattice of $L$. \\

Next, consider the lattice $L$ depicted below.
\begin{center}
\begin{tikzpicture}[scale=0.4]
\node (x1) at (0,0) {$\circ$};
\node (x2) at (-3,-3) {$\circ$};
\node (x3) at (2,-2) {$\circ$};
\node (x4) at (2,-4) {$\circ$};
\node (x5) at (0,-6) {$\circ$};

\node (y1) at (0,-8) {$\circ$};
\node (y2) at (-3,-11) {$\circ$};
\node (y3) at (2,-10) {$\circ$};
\node (y4) at (2,-12) {$\circ$};
\node (y5) at (0,-14) {$\circ$};

\draw (x1) -- (x2) -- (x5) -- (x4) -- (x3) -- (x1);

\draw (y1) -- (y2) -- (y5) -- (y4) -- (y3) -- (y1);

\draw (x5) -- (y1);
\end{tikzpicture}
\end{center}
The lattice $L$ is isomorphic to a sublattice of a free lattice, and $L \in \mathcal{N}$. Moreover, it can be checked that $\Dec(L) = 5$.
\end{example}

\begin{remark} For sublattices $L$ of free lattices, the quantity $\Dec(L)$ gives one measure of the structural complexity of $L$ for the following reason. Because a distributive partition of a lattice $L$ is a decomposition of $L$ into \emph{convex} distributive sublattices, and because, by Theorem \ref{Galvin and Jonsson}, all distributive sublattices of free lattices have a very simple structure, it follows that if $L$ is a sublattice of a free lattice and if $\Dec(L)$ is small, then the structure of $L$ is relatively simple.
\end{remark}

We now state, and prove, the second structural property of this paper. 

\begin{theorem}\label{partial modularity 1} Let $L \in \mathcal{N}$, and assume that $L$ is isomorphic to a sublattice of a free lattice. Moreover, assume that there exists a sublattice $K$ of $L$ and an element $a \in L$ such that for all $b \in K$, $a \parallel b$. Then
$$\Dec(K) \leq |\{ a \vee b : b \in K \} \times \{ a \wedge b : b \in K \}|.$$
\end{theorem}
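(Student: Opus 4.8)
The plan is to produce an explicit distributive partition of $K$ of the required size, namely the partition $\mathcal{F}$ of $K$ into the nonempty fibres of the map $\phi \colon K \to \{a \vee b : b \in K\} \times \{a \wedge b : b \in K\}$ given by $\phi(b) = (a\vee b, a\wedge b)$. Since the fibres $F_{(u,v)} := \phi^{-1}(u,v)$ partition $K$ and there are at most $|\{a\vee b : b\in K\}\times\{a\wedge b:b\in K\}|$ of them, it suffices to check that $\mathcal{F}$ is a distributive partition. First I would observe that each block $F_{(u,v)}$ is a convex sublattice of $K$: convexity is immediate since $b\mapsto a\vee b$ and $b\mapsto a\wedge b$ are monotone (if $b_1\le c\le b_2$ with $b_1,b_2\in F_{(u,v)}$ then $u = a\vee b_1\le a\vee c\le a\vee b_2 = u$, and dually), and the sublattice property follows from the two semidistributive laws — valid because $K$, being a sublattice of a free lattice, is semidistributive: for $b_1,b_2\in F_{(u,v)}$ one gets $a\vee(b_1\wedge b_2) = u$ from $a\vee b_1 = a\vee b_2 = u$, and $a\vee(b_1\vee b_2) = u$ since $b_1,b_2\le u$, and dually for meets.

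The step where the hypothesis $L\in\mathcal{N}$ is needed is showing that each block $F = F_{(u,v)}$ is distributive. Since $F$ is a sublattice of a free lattice it is semidistributive and hence contains no copy of $M_3$, so if $F$ were not distributive the $M_3$-$N_5$ Theorem would give a sublattice $N = \{0',p',q',1',r'\}\cong N_5$ of $F$, where $0'<p'<q'<1'$ is the three-element chain and $r'$ the remaining element. Every $x\in N$ lies in the block $(u,v)$ and, being in $K$, satisfies $a\parallel x$, so $a\vee x = u$ and $a\wedge x = v$ for all $x\in N$; moreover $v = a\wedge 0' < 0'$, $1' < a\vee 1' = u$, and $v<a<u$, all strictly. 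Hence the sublattice of $L$ generated by $N\cup\{a\}$ equals $\{v\}\cup N\cup\{a,u\}$, has exactly eight distinct elements, and — comparing covers — is isomorphic to $L_6$. But $\mathcal{L}_6$ covers $\mathcal{N}$, so $L_6\notin\mathcal{N}$, contradicting $L\in\mathcal{N}$. Thus every block is a convex distributive sublattice of $K$.

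It remains to verify the pairwise condition: if $F_1 = F_{(u_1,v_1)}$ and $F_2 = F_{(u_2,v_2)}$ are distinct blocks with $F_1\cup F_2$ both a sublattice and a convex subset of $K$, then $F_1\cup F_2$ is distributive. Picking $b_i\in F_i$ and using that $b_1\vee b_2\in F_1\cup F_2$ with $a\vee(b_1\vee b_2) = u_1\vee u_2$, together with the dual fact for the meet, forces $u_1,u_2$ to be comparable and $v_1,v_2$ to be comparable. If $F_1\cup F_2$ were not distributive it would, being a semidistributive $M_3$-free sublattice of a free lattice, contain a pentagon $N$, which by the previous paragraph is contained in neither $F_1$ nor $F_2$; thus $N\cap F_1$ and $N\cap F_2$ form a complementary nonempty down-set and up-set of $N$. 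Running through the finitely many such splittings of a pentagon and adjoining $a$ — the same bookkeeping carried out in the proof of Theorem \ref{cube theorem} — one obtains in $L$ a sublattice isomorphic to one of $L_6,\dots,L_{15}$ or a violation of Whitman's Condition, each impossible. This case analysis for the pairwise condition is the main obstacle; granting it, $\mathcal{F}$ is a distributive partition of $K$ with $|\mathcal{F}|\le|\{a\vee b:b\in K\}\times\{a\wedge b:b\in K\}|$, whence $\Dec(K)\le|\{a\vee b:b\in K\}\times\{a\wedge b:b\in K\}|$.
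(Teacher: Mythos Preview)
Your partition $\mathcal{F}$ via the map $\phi(b)=(a\vee b,a\wedge b)$ is exactly the partition the paper uses, and your argument that each block is a convex sublattice of $K$ is correct and matches the paper. Your proof that each single block is distributive is also correct: since $a\vee x$ and $a\wedge x$ are constant on a block, an $N_5$ inside a block together with $a$ generates a copy of $L_6$. This is precisely the paper's ``first main case'' of its key Lemma~\ref{degeneracy lemma}.

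The genuine gap is the pairwise condition. You describe it as ``the same bookkeeping carried out in the proof of Theorem~\ref{cube theorem}'', but that theorem concerns three-element antichains with a common meet and uses $L_{11},L_{12},L_{13},L_{14},L_{15}$; it is a different computation and does not transfer here. What is actually needed is the content of Lemma~\ref{degeneracy lemma}: if a pentagon $N$ sits in $F_1\cup F_2$ with $|\{a\vee x:x\in N\}|\le 2$ and $|\{a\wedge x:x\in N\}|\le 2$, one must exclude $L_7,L_8,L_9,L_{10}$ and $L_{15}$, and doing so is not a matter of merely ``adjoining $a$'' to $N$. For instance, in the case $|\{a\vee x:x\in N\}|=2$, $|\{a\wedge x:x\in N\}|=1$, the paper must first form the auxiliary element $x_3'=(x_5\vee a)\wedge x_1$, locate it relative to $x_2,x_3,x_4$, possibly replace the original pentagon by a new one built from $x_3'$, and only then exhibit $L_7$ or $L_9$; the $(2,2)$ case is longer still and uses Lemma~\ref{L15} together with the convexity hypothesis. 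Your down-set/up-set framing of the split of $N$ is correct, but it does not by itself organise or shorten this analysis. So the proposal is the right strategy and is complete up to exactly the point where the real work of the paper (Lemma~\ref{degeneracy lemma}) begins; that lemma is the missing piece, not a variant of Theorem~\ref{cube theorem}.
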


In order to prove Theorem \ref{partial modularity 1}, we prove Lemma \ref{degeneracy lemma}. Recall that a \emph{convex sublattice} of $L$ is a sublattice of $L$ that is also a convex subset of $L$.

\begin{lemma}\label{degeneracy lemma} Let $L \in \mathcal{N}$, and assume that $L$ is a sublattice of a free lattice. Then the following property holds. For any convex sublattice $K$ of $L$, if $a \in L$ satisfies $a \parallel b$ for all $b \in K$, then $|\{a \vee b : b \in K \}| \geq 3$, $|\{a \wedge b : b \in K \}| \geq 3$, or $K$ is a distributive lattice.
\end{lemma}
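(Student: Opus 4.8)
The plan is to argue by contraposition: assume $K$ is a convex sublattice of $L$, that $a\parallel b$ for all $b\in K$, and that $K$ is \emph{not} distributive; then I will produce three distinct elements among the joins $a\vee b$ (or dually among the meets $a\wedge b$). Since $K$ is not distributive, by the $M_3$–$N_5$ Theorem $K$ contains a sublattice isomorphic to $N_5$ or to $M_3$; but $L\in\mathcal{N}$ is semidistributive (Theorem \ref{Jonsson Rival}), so $M_3$ is excluded, and $K$ contains a copy of $N_5$, say with elements $u_1>u_3>u_4>u_5$ and $u_2$ sitting off to the side, so that $u_1=u_2\vee u_3$, $u_5=u_2\wedge u_4$, $u_2\parallel u_3$, $u_2\parallel u_4$. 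All five of these lie in $K$, hence all five are incomparable with $a$.

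The first main step is to show that the map $b\mapsto(a\vee b,\ a\wedge b)$ cannot be ``too constant'' on this $N_5$. Suppose for contradiction that $|\{a\vee b:b\in K\}|\le 2$ and $|\{a\wedge b:b\in K\}|\le 2$. Restricting attention to the five elements of the $N_5$, the pair $(a\vee b,a\wedge b)$ then takes at most four values, so two of $u_1,\dots,u_5$ — call them $x<y$, since $K$ is convex and they all lie between $a\wedge u_5$ and $a\vee u_1$ we may in fact locate them inside $K$ — have $a\vee x=a\vee y$ and $a\wedge x=a\wedge y$. The idea is to feed the resulting configuration into one of the forbidden-lattice arguments: with $a\parallel x$, $a\parallel y$, $x<y$, $a\vee x=a\vee y=:t$, $a\wedge x=a\wedge y=:s$, and with $x,y$ chosen so that there is a third $N_5$-element strictly between $a\wedge y$ and $a\vee x$ in an incomparable position, one assembles a sublattice on $\{s,x,a,y,t,\dots\}$ together with this extra element that is isomorphic to one of $L_{11},L_{12},L_{14}$, or $L_{15}$ — precisely the configurations ruled out by Lemma \ref{L15} and by $L_i\notin\mathcal{N}$ for $i\in\{11,12,14,15\}$. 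Because the $N_5$ gives us a genuine non-trivial interval (the chain $u_3>u_4$ inside $[u_5,u_1]$, with $u_2$ incomparable to both), there is enough room to produce such a third incomparable element, and no matter which two of the $u_i$ collapse under $b\mapsto(a\vee b,a\wedge b)$, one of these forbidden lattices appears. This is where I expect the real work — and the main obstacle — to lie: there are several cases for which pair $\{x,y\}$ collapses, and each must be checked to yield a genuine sublattice isomorphic to one of the excluded $L_i$, much as in the case analysis in the proof of Theorem \ref{cube theorem}.

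The remaining steps are bookkeeping. Once the contradiction is reached we conclude $|\{a\vee b:b\in K\}|\ge 3$ or $|\{a\wedge b:b\in K\}|\ge 3$, which is exactly the statement. I would organize the case analysis to exploit the up–down symmetry of $N_5$ (replacing $L$ by its dual swaps joins with $a$ and meets with $a$, and $N_5$ is self-dual), halving the number of genuinely distinct cases; and I would use convexity of $K$ repeatedly to guarantee that the auxiliary elements constructed (various $a\vee u_i$, $a\wedge u_i$, and their combinations with the $u_i$) behave as needed — although note the auxiliary meet/join elements $a\vee u_i$ need not lie in $K$, only in $L$, which is all Lemma \ref{L15} and the forbidden-sublattice arguments require. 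The hypothesis that $L$ is a sublattice of a free lattice enters only through Whitman's condition (no doubly reducible elements) and through $L\in\mathcal{N}$, exactly as in the previous section.
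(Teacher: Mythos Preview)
Your overall shape---assume $K$ is non-distributive, extract an $N_5$, suppose both $|\{a\vee b:b\in K\}|\le 2$ and $|\{a\wedge b:b\in K\}|\le 2$, and derive a forbidden sublattice---matches the paper. But two points are genuinely off.

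First, the forbidden lattices you name are the wrong ones. The configurations that actually arise here are $L_6,L_7,L_8,L_9,L_{10}$ and $L_{15}$, not $L_{11},L_{12},L_{14}$. For instance, when $|\{a\vee b:b\in N\}|=|\{a\wedge b:b\in N\}|=1$ the set $N\cup\{a,\ a\vee x_5,\ a\wedge x_1\}$ is a sublattice isomorphic to $L_6$; when the join-set has size~$2$ and the meet-set size~$1$ one obtains, after locating $(x_5\vee a)\wedge x_1$ relative to $x_2,x_3,x_4$, either $L_7$ or $L_9$ (and dually $L_8$ or $L_{10}$). The lattices $L_{11},L_{12},L_{14}$ belong to the proof of Theorem~\ref{cube theorem}, not here; citing them suggests the case analysis has not actually been carried out.

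Second, the pigeonhole reduction (``two of the $u_i$ have the same pair $(a\vee-,a\wedge-)$, call them $x<y$'') is weaker than what is needed and also slightly broken: the colliding pair could be $\{u_2,u_3\}$ or $\{u_2,u_4\}$, which are incomparable. The paper instead splits directly on the pair of cardinalities $(|\{a\vee b:b\in N\}|,\,|\{a\wedge b:b\in N\}|)\in\{1,2\}^2$. The $(2,2)$ case is the hard one and is where convexity of $K$ does real work: in one subcase the original $N_5$ must be replaced by a \emph{new} $N_5$ sitting inside the interval below the smaller join $c'$, and one needs $K$ convex to know that this new $N_5$ still lies in $K$ and hence still has at most two joins and two meets with $a$, reducing to an earlier case. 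Your remark that convexity is used ``repeatedly'' is correct, but this recursive replacement of the $N_5$ is the specific mechanism, and your outline gives no hint of it.
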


\begin{proof} As $a \parallel b$ for all $b \in K$, it follows that for all $c \in \{a \vee b : b \in K\}$, $c \notin K$, for all $d \in \{a \wedge b : b \in K\}$, $d \notin K$, and $\{a \wedge b : b \in K \} \cap \{a \vee b : b \in K \} = \emptyset$. It is enough to assume without loss of generality that $K$ is not distributive. By the $M_3-N_5$ Theorem, $K$ has a sublattice $N$ that is isomorphic to $N_5$. Label the elements of $N$ as follows.

\begin{center}
\begin{tikzpicture}[scale=0.5]
\node (x1) at (0,0) {$x_1$};
\node (x2) at (-3,-3) {$x_2$};
\node (x3) at (2,-2) {$x_3$};
\node (x4) at (2,-4) {$x_4$};
\node (x5) at (0,-6) {$x_5$};

\draw (x1) -- (x2) -- (x5) -- (x4) -- (x3) -- (x1);
\end{tikzpicture}
\end{center}

Suppose that $|\{a \vee b : b \in K \}| \leq 2$ and that $|\{a \wedge b : b \in K \}| \leq 2$. Then $|\{a \vee b : b \in N \}| \leq 2$ and $|\{a \wedge b : b \in N \}| \leq 2$. Since $a \parallel b$ for all $b \in K$, $c \notin K$ for all $c \in \{a \vee b : b \in K \}$ and $d \notin K$ for all $d \in \{a \wedge b : b \in K \}$. There are four main cases to consider. \\

For the first main case, suppose that $|\{a \vee b : b \in N\}| = 1$ and that $|\{a \wedge b : b \in N\}| = 1$. Let $a_1$ be the element of $\{a \vee b : b \in N\}$ and let $a_0$ be the element of $\{a \wedge b : b \in N\}$. Then for all $b \in N$, $b \vee a = a_1$ and $b \wedge a = a_0$. Hence, the sublattice of $L$ generated by $N \cup \{a\}$, depicted below, is isomorphic to $L_6$. But that is impossible because $L_6 \notin \mathcal{N}$.

\begin{center}
\begin{tikzpicture}
\node (x1) at (0,0) {$x_1$};
\node (x2) at (-1,-1) {$x_2$};
\node (x3) at (1,-0.6) {$x_3$};
\node (x4) at (1,-1.4) {$x_4$};
\node (x5) at (0,-2) {$x_5$};

\node (x1a) at (1.2,1.1) {$x_5 \vee a$};
\node (a) at (3.2,-1) {$a$};
\node (bot) at (1.2,-3.1) {$x_1 \wedge a$};

\draw (x1) -- (x2) -- (x5) -- (x4) -- (x3) -- (x1);
\draw (x1) -- (x1a) -- (a) -- (bot) -- (x5);
\end{tikzpicture}
\end{center}

For the second main case, suppose that $|\{a \vee b : b \in N\}| = 2$ and $|\{a \wedge b : b \in N\}| = 1$. If $(x_5 \vee a) \wedge x_1 \geq x_2$ and $(x_5 \vee a) \wedge x_1 \geq x_4$, then, as $x_2 \vee x_4 = x_1$, $(x_5 \vee a) \wedge x_1 \geq x_1$, implying that $x_5 \vee a \geq x_1$. But then, $|\{a \vee b : b \in N\}| = 1$, contrary to the supposition that $|\{a \vee b : b \in N \}| = 2$. Similarly, $(x_5 \vee a) \wedge x_1 \ngeq x_1$. So it is enough to consider the following. \\

If $x_4 < (x_5 \vee a) \wedge x_1 \leq x_3$, then $x_5 \vee a < x_1 \vee a$. Let $x_3' = (x_5 \vee a) \wedge x_1$. As $x_4 < x_3' \leq x_3$, the sublattice of $L$ generated by $\{x_1, x_2, x_3', x_4, x_5\}$ is isomorphic to $N_5$. So as $x_2 \vee x_4 = x_1$, $x_2 \nleq x_5 \vee a$. Hence, because $|\{a \vee b : b \in N\}| = 2$, it follows that $x_2 \vee a = x_1 \vee a$. Moreover, as $x_2 \leq x_1$, $x_2 \wedge (x_5 \vee a) \leq x_2 \wedge x_1 \wedge (x_5 \vee a) = x_2 \wedge x_3' = x_5$. Furthermore, as $|\{a \wedge b : b \in N \}| = 1$, $a \wedge x_3 = a \wedge x_5$. Hence, as $x_3 \geq x_3' \geq x_5$, $a \wedge x_3' = a \wedge x_5$. Lastly, as $x_5 < x_3' < a \vee x_5$, $a \vee x_3' = a \vee x_5$. It follows that the sublattice of $L$ generated by $\{x_1, x_2, x_3', x_4, x_5, x_1 \vee a, x_5 \vee a, x_1 \wedge a \}$, depicted below, is isomorphic to $L_7$. But that is impossible because $L_7 \notin \mathcal{N}$.

\begin{center}
\begin{tikzpicture}
\node (x1) at (0,0) {$x_1$};
\node (x2) at (-1,-1) {$x_2$};
\node (x3) at (1,-0.6) {$x_3'$};
\node (x4) at (1,-1.4) {$x_4$};
\node (x5) at (0,-2) {$x_5$};

\node (x1a) at (1.2,1.1) {$x_2 \vee a$};
\node (x3a) at (2.2,0) {$x_5 \vee a$};
\node (a) at (3.2,-1.2) {$a$};
\node (bot) at (1.2,-3.1) {$x_1 \wedge a$};

\draw (x1) -- (x2) -- (x5) -- (x4) -- (x3) -- (x1);
\draw (x1a) -- (x3a) -- (x3) -- (x3a) -- (a) -- (bot) -- (x5);
\draw (x1) -- (x1a);
\end{tikzpicture}
\end{center}

If $x_2 < (x_5 \vee a) \wedge x_1 < x_1$ or $x_3 < (x_5 \vee a) \wedge x_1 < x_1$, then assume without loss of generality that $x_3 < (x_5 \vee a) \wedge x_1 < x_1$. Define $x_3' = (x_5 \vee a) \wedge x_1$. Note that $x_3' > x_3$. If $x_2 \wedge x_3' = x_5$, then the sublattice of $L$ generated by $\{x_1, x_2, x_3', x_3, x_5\}$ is isomorphic to $N_5$. Moreover, as $|\{a \wedge b : b \in N\}| = 1$, $x_1 \wedge a = x_4 \wedge a$. So as $x_1 < x_3' < x_4$, it follows that $x_3' \wedge a = x_1 \wedge a$. Furthermore, as $x_5 < x_3' < x_5 \vee a$, $a \vee x_3' = x_5 \vee a$. From this, we see that if $x_2 \wedge x_3' = x_5$, then, because $x_3 < (x_5 \vee a) \wedge x_1 \leq x_3'$, we can proceed as before to derive a contradiction. So assume that $x_2 \wedge x_3' > x_5$, and set $x_3'' = x_2 \wedge x_3'$. Define $x_3'''  = x_3'' \vee x_3$. If $x_3'' \geq x_3$, then $x_2 \geq x_3'' \geq x_3$, which is impossible. Moreover, if $x_3'' \leq x_3$, then $x_3'' \leq x_2 \wedge x_3 = x_5$, which is impossible. So $x_3'' \parallel x_3$. Since $|\{a \vee b : b \in N\}| > 1$, $x_1 \nleq x_5 \vee a$ and, as $a \nleq x_1$, it is clear that $x_1 \ngeq x_5 \vee a$. Hence, $x_1 \parallel x_5 \vee a$. So, since $L$ has no doubly reducible elements, it follows that $x_3''' < x_3'$. By definition, $x_3'' = x_2 \wedge x_3'$. Moreover, $x_1 \geq x_2 \vee x_3''' \geq x_2 \vee x_3 = x_1$, implying that $x_2 \vee x_3''' = x_1$. Hence, the sublattice of $L$ generated by $\{x_1, x_2, x_3', x_3''', x_3''\}$ is isomorphic to $N_5$. Since $|\{a \wedge b : b \in N\}| = 1$, $x_1 \wedge a = x_5 \wedge a$. So as $x_1 > x_3' > x_3''' > x_3'' > x_5$, $x_3' \wedge a = x_3'' \wedge a = x_3''' \wedge a = x_1 \wedge a$. Moreover, $x_5 < x_3'' < x_3''' < x_3' < x_5 \vee a$, so $x_3'' \vee a = x_3''' \vee a = x_3' \vee a = x_5 \vee a$. The resulting construction is depicted below. Because $x_3''' < (x_5 \vee a) \wedge x_1 \leq x_3'$, it follows that we can proceed as before to derive a contradiction.

\begin{center}
\begin{tikzpicture}[scale=1.2]
\node (x1) at (0,0) {$x_1$};
\node (x2) at (-1,-1) {$x_2$};
\node (x3) at (1,-0.6) {$x_3'$};
\node (x333) at (1,-1.4) {$x_3'''$};
\node (x33) at (0,-2) {$x_3''$};

\node (x1a) at (1.2,1.1) {$x_1 \vee a$};
\node (x3a) at (2.2,0) {$x_5 \vee a$};
\node (a) at (4,-2) {$a$};
\node (bot) at (2,-4) {$x_1 \wedge a$};

\node (x4) at (2,-2.2) {$x_3$};
\node (x5) at (1,-3) {$x_5$};

\draw (x1) -- (x2) -- (x33) -- (x333) -- (x3) -- (x1);
\draw (x1a) -- (x3a) -- (x3) -- (x3a) -- (a) -- (bot) -- (x5) -- (x33);
\draw (x1) -- (x1a);
\draw (x333) -- (x4) -- (x5);
\end{tikzpicture}
\end{center}

If $(x_5 \vee a) \wedge x_1 = x_2$, then, as $x_4 \leq x_1$, $x_4 \wedge (x_5 \vee a) = x_4 \wedge x_1 \wedge (x_5 \vee a) = x_4 \wedge x_2 = x_5$ and, as $x_3 \leq x_1$, $x_3 \wedge (x_5 \vee a) = x_3 \wedge x_1 \wedge (x_5 \vee a) = x_3 \wedge x_2 = x_5$. Lastly, since $|\{a \vee b : b \in N\}| > 1$ and $x_4 \vee x_2 = x_1$, $x_4 \nleq x_5 \vee a$. Hence, because $|\{a \vee b : b \in N \}| = 2$, it follows that $x_4 \vee a = x_1 \vee a$. From this, it can be seen that the sublattice of $L$ generated by $\{a, x_1, x_2, x_3, x_4, x_5 \}$ is isomorphic to $L_9$, this is depicted below. But that is impossible because $L_9 \notin \mathcal{N}$.

\begin{center}
\begin{tikzpicture}[scale=2.2]
\node (x1) at (0,0.2) {$x_1$};
\node (x2) at (0.55,-0.4) {$x_2$};
\node (x3) at (-0.55,-0.2) {$x_3$};
\node (x4) at (-1.1,-0.6) {$x_4$};
\node (x5) at (-0.55,-1.2) {$x_5$};

\node (x1a) at (0.6,0.65) {$x_4 \vee a$};
\node (x3a) at (1.15,0.05) {$x_5 \vee a$};
\node (a) at (1.7,-0.6) {$a$};
\node (bot) at (0, -1.8) {$x_1 \wedge a$};

\draw (x1) -- (x2) -- (x5) -- (x4) -- (x3) -- (x1);
\draw (x1a) -- (x3a) -- (a) -- (bot) -- (x5);
\draw (x1) -- (x1a);
\draw (x2) -- (x3a);
\end{tikzpicture}
\end{center}

If $x_2 \nleq (x_5 \vee a) \wedge x_1$ and $x_3 \nleq (x_5 \vee a) \wedge x_1$, then, as $x_2 \leq x_1$ and $x_3 \leq x_1$, $x_2 \nleq x_5 \vee a$ and $x_3 \nleq x_5 \vee a$. So, because $|\{a \vee b : b \in N\}| = 2$, it follows that $x_2 \vee (x_5 \vee a) = x_3 \vee (x_5 \vee a) = x_1 \vee a$. Hence, as $L$ is semidistributive, $(x_5 \vee a) \vee (x_2 \wedge x_3) = x_1 \vee a$. But as $x_2 \wedge x_3 = x_5$ and $x_5 < x_5 \vee a$, that is impossible. \\

For the third main case, suppose that $|\{a \vee b : b \in N\}| = 1$ and $|\{a \wedge b : b \in N\}| = 2$. Because $L_8 \notin \mathcal{N}$ and because $L_{10} \notin \mathcal{N}$, it follows that, we can argue similarly to before to derive a contradiction. \\

For the fourth main case, suppose that $|\{a \vee b : b \in N\}| = 2$ and $|\{a \wedge b : b \in N\}| = 2$. Let $c$ be the greatest element of $\{a \vee b : b \in N\}$, let $c'$ be the least element of $\{a \vee b : b \in N\}$, let $d$ be the least element of $\{a \wedge b : b \in N\}$, and let $d'$ be the greatest element of $\{a \wedge b : b \in N\}$. Since $a \parallel b$ for all $b \in N$, $b \nleq d'$ for all $b \in N$ and $b \ngeq c'$ for all $b \in N$. First assume that there exists an element $b \in N$ such that $b \parallel c'$ and $b \parallel d'$. Consider the elements $x_5$, $b$, $x_1$, $d'$, $a$, and $c'$. By definition, $x_5 < b < x_1$ and $d' < a < c'$. Because $a \parallel e$ for all $e \in N$, $a \parallel b$, $a \parallel x_1$, and $a \parallel x_5$. Because $\{a \vee e : e \in N \} = \{c, c'\}$ and $x_5$ is the minimum element of $N$, $a \vee x_5 = c'$. Moreover, because $\{a \wedge e : e \in N \} = \{d, d'\}$ and $x_1$ is the maximum element of $N$, $a \wedge x_1 = d'$. As $b \parallel c'$ and $\{a \vee e : e \in N \} = \{c, c'\}$, $b \vee a = x_1 \vee c' = c$. Lastly, as $b \parallel d'$ and $\{a \wedge e : e \in N \} = \{d, d'\}$, $b \wedge a = x_5 \wedge d' = d$. This is depicted below.

\begin{center}
\begin{tikzpicture}[scale=1.2]
\node (b) at (0,0) {$b$};
\node (x1) at (1,1) {$x_1$};
\node (c) at (2,2) {$c$};
\node (cc) at (3,1) {$c'$};
\node (a) at (4,0) {$a$};
\node (dd) at (3,-1) {$d'$};
\node (d) at (2,-2) {$d$};
\node (x5) at (1,-1) {$x_5$};

\draw (b) -- (x1) -- (c) -- (cc) -- (a) -- (dd) -- (d) -- (x5) -- (b);
\draw (x1) -- (dd);
\draw (x5) -- (cc);
\end{tikzpicture}
\end{center}

So, as $L$ has no doubly reducible elements, $x_5$, $b$, $x_1$, $d'$, $a$, and $c'$ satisfy the conditions of Lemma \ref{L15}. But then, $L$ contains a sublattice that is isomorphic to $L_{15}$, which is impossible because $L_{15} \notin \mathcal{N}$. \\

Lastly, assume that for all $b \in N$, $b \leq c'$ or $b \geq d'$. If $d' \leq x_3$ and $d' \leq x_2$, then, as $x_5 = x_3 \wedge x_2$, $d' \leq x_5$. Similarly, if $c' \geq x_4$ and $c' \geq x_2$, then, as $x_1 = x_4 \vee x_2$, $c' \geq x_1$. However, $d' \leq x_5$ is impossible as $d \in \{a \wedge b : b \in N\}$, and $c' \geq x_1$ is impossible as $c \in \{a \vee b : b \in N \}$. So, assume without loss of generality that $d' \leq x_4$, $d' \parallel x_2$, $c' \geq x_2$, $c' \parallel x_3$, and $c' \parallel x_4$. This is depicted by the left-most diagram shown below.

\begin{center}
\begin{tikzpicture}[scale=2.2]
\node (x1) at (0,0.2) {$x_1$};
\node (x2) at (0.55,-0.4) {$x_2$};
\node (x3) at (-0.55,-0.2) {$x_3$};
\node (x4) at (-1.1,-0.6) {$x_4$};
\node (x5) at (-0.55,-1.2) {$x_5$};

\node (x1a) at (0.6,0.65) {$c$};
\node (x3a) at (1.15,0.05) {$c'$};
\node (a) at (1.7,-0.6) {$a$};
\node (bot) at (0, -1.8) {$d$};
\node (f) at (0.85,-1.2) {$d'$};

\draw (x1) -- (x2) -- (x5) -- (x4) -- (x3) -- (x1);
\draw (x1a) -- (x3a) -- (a) -- (f) -- (bot) -- (x5);
\draw (x1) -- (x1a);
\draw (x2) -- (x3a);
\draw (x4) -- (f);
\end{tikzpicture} \;\;\;\;
\begin{tikzpicture}[scale=2.2]
\node (x1) at (0,0.2) {$x_1$};
\node (x2) at (0.55,-0.4) {$x_2'$};
\node (x3) at (-0.55,-0.2) {$x_3$};
\node (x4) at (-1.1,-0.6) {$x_4$};
\node (x5) at (-0.55,-1.2) {$x_5'$};

\node (x1a) at (0.6,0.65) {$c$};
\node (x3a) at (1.15,0.05) {$c'$};
\node (a) at (1.7,-0.6) {$a$};
\node (bot) at (0, -1.8) {$d$};
\node (f) at (0.85,-1.2) {$d'$};

\draw (x1) -- (x2) -- (x5) -- (x4) -- (x3) -- (x1);
\draw (x1a) -- (x3a) -- (a) -- (f) -- (bot) -- (x5);
\draw (x1) -- (x1a);
\draw (x2) -- (x3a);
\draw (x4) -- (f);
\end{tikzpicture}
\end{center}
We can also assume without loss of generality that $x_2 = x_1 \wedge c'$ and $x_5 = x_4 \wedge c'$ for the following reasons. Let $x_2' = x_1 \wedge c'$ and let $x_5' = x_4 \wedge c'$. Note that $x_4 \wedge x_2' = x_4 \wedge x_1 \wedge c' = x_4 \wedge c' = x_5'$. As $\{a \vee b : b \in K\} = \{a \vee b : b \in N\} = \{c, c'\}$, as $\{a \wedge b : b \in K\} = \{a \wedge b : b \in N\} = \{d, d'\}$, and as $K$ is a convex sublattice of $L$, it follows that $\{a \vee b : b \in N'\} = \{c, c'\}$, and $\{a \vee b : b \in N'\} \subseteq \{d, d'\}$. This is depicted by the right-most diagram shown above. \\

If $x_3 \wedge x_2' = x_5'$, then $\{x_1, x_2', x_3, x_4, x_5' \}$ generates a sublattice $N'$ of $L$ that is isomorphic to $N_5$. Hence, as $|\{a \vee b : b \in N'\}| = 2$ and $|\{a \wedge b : b \in N' \}| \leq 2$, we have reduced this subcase to a case that we have already proven to be impossible, or to the above case in which $x_2 = x_1 \wedge c'$ and $x_5 = x_4 \wedge c'$. \\

If $x_3 \wedge x_2' > x_5'$, then, as $x_2 \leq x_2'$ and $x_5 \leq x_5' \leq x_4$,
$$(x_3 \wedge x_2') \wedge x_2 = x_3 \wedge x_2 = x_5 = x_4 \wedge x_2 = x_5' \wedge x_2. $$
Moreover, as $x_3 \wedge x_2' > x_5'$, $x_5' > x_5$, implying that $x_5' \parallel x_2$. Consider the sublattice $L'$ of $L$ generated by $\{x_5', x_3 \wedge x_2', (x_3 \wedge x_2') \vee x_2, x_2, x_5\}$. This is partially depicted by the below diagram. \\

\begin{center}
\begin{tikzpicture}[scale=2.2]
\node (x1) at (0,0.2) {$(x_3 \wedge x_2') \vee x_2$};
\node (x2) at (0.55,-0.4) {$x_2$};
\node (x3) at (-0.55,-0.2) {$x_3 \wedge x_2'$};
\node (x4) at (-1.1,-0.6) {$x_5'$};
\node (x5) at (-0.55,-1.2) {$x_5$};

\node (x1a) at (0.6,0.65) {$c'$};
\node (a) at (1.7,-0.6) {$a$};
\node (bot) at (0, -1.8) {$d$};

\draw (x1) -- (x2) -- (x5) -- (x4) -- (x3) -- (x1);
\draw (x1a) -- (a) -- (bot) -- (x5);
\draw (x1) -- (x1a);
\end{tikzpicture}
\end{center}

Suppose that $L'$ does not contain a sublattice that is isomorphic to $N_5$. As $(x_3 \wedge x_2') \wedge x_2 = x_5$, $x_5' \vee x_2 < (x_3 \wedge x_2') \vee x_2$ for otherwise, $\{x_5', x_3 \wedge x_2', (x_3 \wedge x_2') \vee x_2, x_5, x_2\}$ generates a sublattice of $L'$ that is isomorphic to $N_5$. Moreover, $x_5' \vee x_2 \leq x_3 \wedge x_2'$ implies that $x_2 \leq x_3$, and $x_5' \vee x_2 > x_3 \wedge x_2'$ implies that $x_5' \vee x_2 = (x_3 \wedge x_2') \vee x_2$, so $x_5' \vee x_2 \parallel x_3 \wedge x_2$. Lastly, as $x_5' \parallel x_2$ and $((x_5' \vee x_2) \wedge (x_3 \wedge x_2')) \wedge x_2 = x_3 \wedge x_2 = x_5$, $(x_3 \wedge x_2') \wedge (x_5' \vee x_2) = x_5'$ for otherwise, $\{x_5', (x_5' \vee x_2) \wedge (x_3 \wedge x_2'), x_5' \vee x_2, x_5, x_2 \}$ generates a sublattice of $L'$ that is isomorphic to $N_5$. Hence, the following holds. Because $x_1 \geq x_4 \vee (x_5' \vee x_2) \geq x_4 \vee x_2 = x_1$ implies that $x_4 \vee (x_5' \vee x_2) = x_1$, $(x_3 \wedge x_2') \wedge (x_4 \vee (x_5' \vee x_2)) = (x_3 \wedge x_2') \wedge x_1 = x_3 \wedge x_2'$. But $L$ is semidistributive, $(x_3 \wedge x_2') \wedge x_4 = x_4 \wedge x_2' = x_5'$, and $(x_3 \wedge x_2') \wedge (x_5' \vee x_2) = x_5' < x_3 \wedge x_2'$, implying that $(x_3 \wedge x_2') \wedge (x_4 \vee (x_5' \vee x_2)) = x_5' < x_3 \wedge x_2'$. So we have reached a contradiction. \\

Hence, $L'$ contains a sublattice $N''$ that is isomorphic to $N_5$. Since $|\{a \vee b : b \in K\}| \leq 2$, $\{a \vee b : b \in K\} = \{c, c'\}$. Moreover, because $K$ is a convex sublattice of $L$, because $x_5 \leq b \leq c'$ for all $b \in L'$, and because $x_5 \vee a = c'$, 
$$\{a \vee b : b \in N'' \} = \{a \vee b : b \in L'\} = \{c'\}. $$
Since $|\{a \wedge b : b \in K\}| \leq 2$, $\{a \wedge b : b \in K\} = \{d, d'\}$. Moreover, because $K$ is a convex sublattice of $L$, it follows that
$$\{a \wedge b : b \in N'' \} \subseteq \{a \wedge b : b \in L'\} \subseteq \{d, d'\}. $$
Hence, we have reduced this subcase to a case that we have already proven to be impossible, or to the above case in which $x_2 = x_1 \wedge c'$ and $x_5 = x_4 \wedge c'$. \\

Therefore, assume without loss of generality that $x_2 = x_1 \wedge c'$ and $x_5 = x_4 \wedge c'$. We reach a contradiction as follows. Since $x_5 = x_4 \wedge c'$ and $d' \leq x_4$, the fact that $d' \leq a \leq c'$ implies that $d' \leq x_5$. But that is impossible because $\{a \wedge b : b \in N\} = \{d, d'\}$.

\end{proof}

Now, we prove Theorem \ref{partial modularity 1}.

\begin{proof} Let $I_1 = \{a \vee b : b \in K\}$, and let $\mathcal{F}_1 = \{Y_i : i \in I_1 \}$ be the set partition of the set of elements of $K$ where for all $i \in I_1$, $Y_i = \{b \in K : a \vee b = i \}$. Similarly, define $I_0 = \{a \wedge b : b \in K\}$, and let $\mathcal{F}_0 = \{X_i : i \in I_0 \}$ be the partition of $K$ where for all $i \in I_0$, $F_i = \{b \in K : a \wedge b = i \}$. For all $i \in I_1$, $Y_i$ is a convex subset of $L$, and for all $i \in I_0$, $X_i$ is a convex subset of $L$. Moreover, since $L$ is semidistributive, $Y_i$ is a sublattice of $K$ for all $i \in I_1$ and $X_i$ is a sublattice of $K$ for all $i \in I_0$. Now, consider the following set partition of the set of elements of $K$ 
$$\mathcal{F} = \{X_i \cap Y_j : X_i \cap Y_j \neq \emptyset, X_i \in \mathcal{F}_0, \text{ and } Y_i \in \mathcal{F}_1 \}.$$
The intersection of two convex subsets of $L$ is a convex subset of $L$, and the intersection of two sublattices of $L$ is a sublattice of $L$. Hence, $F$ is a convex sublattice of $L$ for all $F \in \mathcal{F}$. Moreover, for all $F \in \mathcal{F}$
$$|\{ a \vee b : b \in F \}| = |\{a \wedge b : b \in F \}| = 1.$$
So by Lemma \ref{degeneracy lemma}, $F$ is distributive.  \\

Let $F_1, F_2 \in \mathcal{F}$ be distinct. If $F_1 \cup F_2$ is not a sublattice of $K$, or if $F_1 \cup F_2$ is not a convex subset of $L$, then we are done. So assume without loss of generality that $F_1 \cup F_2$ is a convex sublattice of $K$. Since
$$|\{a \vee b : b \in F_1 \cup F_2 \}| \leq 2 $$
and
$$|\{a \wedge b : b \in F_1 \cup F_2 \}| \leq 2, $$
Lemma \ref{degeneracy lemma} implies that $F_1 \cup F_2$ is distributive. Hence, $\mathcal{F}$ is a distributive partition of $K$. The theorem now follows as
$$\Dec(K) \leq |\mathcal{F}| \leq |I_1 \times I_0| \leq |\{ a \vee b : b \in K \} \times \{ a \wedge b : b \in K \}|.$$
\end{proof}

\begin{remark} In the proof of Theorem \ref{partial modularity 1}, we only made essential use of the assumptions that $L \in \mathcal{N}$ and that $L$ has no doubly reducible elements. So Theorem \ref{partial modularity 1} describes a property of lattices $L \in \mathcal{N}$ that have no doubly reducible elements.
\end{remark}

Given Theorem \ref{partial modularity 1}, it is natural to consider the case when $|K|$ and $\Dec(K)$ are large. We will state and prove a third structural property that applies to such a case and that resembles Definition \ref{modular}. In order to provide motivation for this, we first recall the following property of semidistributive lattices. \\

Recall that the \emph{length} of a poset $P$ is the supremum of the cardinalities of all chains in $P$.

\begin{theorem} (Dilworth, J\'onsson, Kiefer, (\cite{FL}, Theorem 5.59, p. 169), \cite{Semidistributive origins 1, Semidistributive origins 2})\label{semidistributive length} A semidistributive lattice without infinite chains is finite; if it is of length $n + 1$ for some $n \in \mathbb{N}$, then it has at most $2^n$ elements.
\end{theorem}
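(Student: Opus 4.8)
The plan is to prove the two claims in the following order: first the cardinality bound for semidistributive lattices of finite length, and then derive finiteness under the weaker hypothesis of no infinite chains by bootstrapping from that bound.

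\smallskip

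\emph{The bound.} I would induct on the length, with inductive hypothesis that every semidistributive lattice of length at most $n$ has at most $2^{n-1}$ elements (for $n=0$ this reads: a one-element lattice has at most $2^0$ elements). Let $L$ be semidistributive of length $n+1$ with $n \ge 1$; then $L$ has a greatest element $1$ and hence a coatom $m$, and $L = {\downarrow}m \cup U$ where $U = \{x \in L : x \not\le m\}$. The set ${\downarrow}m = [0,m]$ is an interval, hence a semidistributive sublattice, and adjoining $1$ to any chain in it shows it has length at most $n$. The key observation is that $U$ is also a sublattice: since $m$ is a coatom, $x \not\le m$ forces $x \vee m = 1$, so for $x,y \in U$ we have $x \vee m = y \vee m = 1$, and join-semidistributivity gives $(x \wedge y) \vee m = 1 \ne m$, i.e. $x \wedge y \in U$; moreover every chain in $U$ omits $0$, so adjoining $0$ shows $U$ has length at most $n$. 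Applying the inductive hypothesis to ${\downarrow}m$ and to $U$ gives $|L| = |{\downarrow}m| + |U| \le 2^{n-1} + 2^{n-1} = 2^n$.

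\smallskip

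\emph{From no infinite chains to finiteness.} The hypothesis yields the ascending and descending chain conditions, so $L$ is complete and in particular bounded. It suffices to prove that $L$ has finite length, for then the bound just established makes $L$ finite (with at most $2^n$ elements if the length is $n+1$). Suppose not. For $x \in L$ let $f(x)$ be the supremum of the cardinalities of chains whose greatest element is $x$; then $f(x) = \infty$ for some $x$ (indeed $f(1)$ is the length of $L$), so by the descending chain condition we may choose $x_0$ minimal with $f(x_0) = \infty$. Now $x_0 \ne 0$, and $x_0$ is not join-irreducible, since a join-irreducible element has a unique lower cover $x_{0*}$ and then $f(x_{0*}) \ge f(x_0) - 1 = \infty$ would contradict minimality. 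Also $x_0$ cannot have only finitely many lower covers $m_1,\dots,m_k$, because every $y < x_0$ lies below some lower cover of $x_0$ (by the ascending chain condition), so $f(x_0) \le 1 + \max_i f(m_i)$, which is finite since each $f(m_i) < \infty$ by minimality. Hence $x_0$ has an infinite antichain $M$ of lower covers, and any two distinct $m,m' \in M$ satisfy $m \vee m' = x_0$.

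\smallskip

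Fix $m_1 \in M$. If $m_1 \wedge m_a = m_1 \wedge m_b$ for some distinct $m_a, m_b \in M \setminus \{m_1\}$, then meet-semidistributivity gives $m_1 \wedge (m_a \vee m_b) = m_1 \wedge m_a$; but $m_a \vee m_b = x_0 \ge m_1$, so $m_1 = m_1 \wedge m_a$, i.e. $m_1 \le m_a$, contradicting that $M$ is an antichain. Hence $m \mapsto m_1 \wedge m$ is injective on $M \setminus \{m_1\}$, so ${\downarrow}m_1$ is infinite. On the other hand $m_1 < x_0$ gives $f(m_1) < \infty$ by minimality of $x_0$; since every chain in $[0,m_1]$ extends to one with greatest element $m_1$, the interval $[0,m_1]$ has finite length, hence is finite by the bound already proved — contradicting $|{\downarrow}m_1| = \infty$. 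Therefore $L$ has finite length, and the bound finishes the proof. I expect the fiddly step to be the trichotomy for $x_0$ (ruling out join-irreducibility and finitely many lower covers before concluding there are infinitely many); once that is in place, the only semidistributivity input is the one-line injectivity argument, and everything else is bookkeeping with the height function $f$ together with the appeal to the cardinality bound applied to ${\downarrow}m_1$.
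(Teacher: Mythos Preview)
The paper does not supply its own proof of this theorem; it is quoted with attribution and references to \cite{FL}, \cite{Semidistributive origins 1}, \cite{Semidistributive origins 2}, so there is no in-paper argument to compare against. That said, your proof is correct. The inductive bound via the decomposition $L = {\downarrow}m \cup U$ for a coatom $m$, with join-semidistributivity forcing $U = \{x : x \nleq m\}$ to be a sublattice of length at most $n$, is exactly the classical argument recorded in the cited sources. Your second part, reducing ``no infinite chains'' to finite length via a minimal element $x_0$ with $f(x_0)=\infty$ and then using meet-semidistributivity to inject the infinite set of lower covers of $x_0$ into ${\downarrow}m_1$, is also sound. Two cosmetic points: the parenthetical base case should read $n=1$ rather than $n=0$ (under the paper's convention that the length is the cardinality of a longest chain, a one-element lattice has length $1$), and the sub-case ``$x_0$ join-irreducible'' is already subsumed by ``$x_0$ has finitely many lower covers'', so your trichotomy can be collapsed to a dichotomy without loss.
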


\begin{remark} In fact, J\'onsson and Kiefer, receiving assistance from Dilworth, proved Theorem \ref{semidistributive length} for \emph{meet semidistributive} lattices and \emph{join semidistributive lattices} \cite{Semidistributive origins 1, Semidistributive origins 2, FL}.
\end{remark}

So as all sublattices of free lattices are semidistributive, it follows that if $|K|$ is large, then $K$ will have chains with a large number of elements. We now state and prove the third structural property of this paper.

\begin{theorem}\label{partial modularity 2} Let $L$ be isomorphic to a sublattice of a free lattice, and assume that $L \in \mathcal{N}$. Moreover, let $a, b_1, b_2, b_3, b_4, b_5 \in L$ be such that $a \parallel b_i$ for all $1 \leq i \leq 5$, $b_i < b_{i+1}$ for all $1 \leq i \leq 4$, and $a \vee b_i < a \vee b_{i+1}$ for all $1 \leq i \leq 4$. If $(a \vee b_4) \wedge b_5 \neq b_4$, then $(a \vee b_3) \wedge b_5$ is covered by $a \vee b_3$ in $L$.
\end{theorem}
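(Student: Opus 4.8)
The plan is to argue by contradiction. Suppose $L$ is a sublattice of a free lattice with $L\in\mathcal N$ — so $L$ is semidistributive, satisfies Whitman's condition, has no doubly reducible elements, and contains no sublattice isomorphic to any of $L_6,\dots,L_{15}$ — and suppose $(a\vee b_3)\wedge b_5$ is \emph{not} covered by $a\vee b_3$. I will exhibit a copy of one of $L_6,\dots,L_{15}$ inside $L$, contradicting $L\in\mathcal N$; Lemma~\ref{L15} is the main tool.

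First I would fix notation and collect the routine facts. Write $c_i=a\vee b_i$, $g=(a\vee b_4)\wedge b_5=c_4\wedge b_5$, and $e=(a\vee b_3)\wedge b_5=c_3\wedge b_5$. The hypotheses give $a<c_3<c_4<c_5$, and the extra hypothesis $g\neq b_4$ upgrades $b_4\le g\le b_5$ to $b_4<g<b_5$; then a couple of lines yield $a\parallel g$, $a\vee g=c_4$, $c_3\vee g=c_4$, and $c_3\parallel b_4,b_5,g$. On the meet side, $b_3\le e<c_3$ (the strictness because $e=c_3$ would force $a\le b_5$), $e\le g$, and $e=c_3\wedge g$. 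Using the assumed non-covering, pick $x\in L$ with $e<x<c_3$. Then $b_3<x$, and meeting or joining with $a$ gives $a\vee x=c_3$ and $a\parallel x$, while $x\le c_3$ together with $x>e=c_3\wedge b_5=c_3\wedge g$ gives $x\wedge b_5=x\wedge g=e$ and $x\parallel b_4,b_5,g$.

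The heart of the argument is to feed this data into Lemma~\ref{L15}. Since $x\vee b_5\le c_5$, while $b_5\le x\vee b_5$ and $b_5\not\le c_3$, either $x\vee b_5\parallel c_3$, or $x\vee b_5\ge c_3$ and hence $x\vee b_5\ge c_3\vee b_5=c_5$, so $x\vee b_5=c_5$. In the first subcase I would verify that the two $3$-chains $X\colon x<c_3<c_4$ and $Y\colon g<b_5<x\vee b_5$ satisfy the hypotheses of Lemma~\ref{L15}: the six elements are distinct, the incomparabilities $c_3\parallel g,b_5,x\vee b_5$ and $b_5\parallel x,c_3,c_4$ are exactly the ones collected above, the join condition becomes $c_3\vee b_5=c_4\vee(x\vee b_5)$ with both sides equal to $c_5$ (using $c_4\vee b_5=c_5$ and $x\le c_3$), and the meet condition becomes $c_3\wedge b_5=x\wedge g$ with both sides equal to $e$. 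Thus $L$ contains a sublattice isomorphic to $L_{15}$, a contradiction.

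The remaining subcase $x\vee b_5=c_5$ — and within it the further split according to whether $x\vee b_4$ equals $c_4$ or is incomparable to $c_3$, and according to the degeneracy pattern of the meet chain $a\wedge b_3\le a\wedge b_4\le a\wedge b_5$ and of $x\wedge b_4$ relative to $e$ and $b_4$ — is where I expect the real work to lie. Here one either adjoins $a$ (or $x$) directly to an $N_5$ built from the chain data and recognizes one of $L_6,L_7,L_9,L_{11},L_{12},L_{13},L_{14}$, or rearranges the available elements into another pair of $3$-chains fitting Lemma~\ref{L15}, exactly in the style of the proofs of Theorem~\ref{cube theorem} and Lemma~\ref{degeneracy lemma}; semidistributivity, Whitman's condition, and the absence of doubly reducible elements are used to produce the extra covers and to discard configurations in which a putative element would be doubly reducible. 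Once every subcase has produced one of $L_6,\dots,L_{15}$, no such $x$ can exist, so $(a\vee b_3)\wedge b_5$ is covered by $a\vee b_3$, as required.
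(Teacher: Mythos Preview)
Your first subcase ($x\vee b_5\parallel c_3$) is a clean, correct application of Lemma~\ref{L15}; the six elements $x<c_3<c_4$ and $g<b_5<x\vee b_5$ really do satisfy all the hypotheses. But notice that this subcase uses neither $b_1,b_2$ nor the assumption $(a\vee b_4)\wedge b_5\neq b_4$, so the entire force of the theorem must lie in the remaining subcase $x\vee b_5=c_5$, which you have only sketched. That sketch is where the gap is: you gesture at further splits on $x\vee b_4$ and on the meets $a\wedge b_i$, but none of these threads uses the two structural ingredients that make the argument close, and it is not clear that the list of forbidden lattices you name ($L_6,L_7,L_9,L_{11},\dots$) can all be reached from the data you have retained.

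The paper organizes the proof differently and identifies exactly what is missing. First, all five levels matter: the ten elements $a\vee b_i$ and $(a\vee b_i)\wedge b_5$ generate a copy of $\mathbf 2\times\mathbf 5$, and the outer rungs coming from $b_1,b_2$ are used essentially in the subsequent case analysis. Second, the hypothesis $(a\vee b_4)\wedge b_5\neq b_4$ is spent on producing a \emph{second} extra element, namely $s=((a\vee b_3)\wedge b_5)\vee b_4=e\vee b_4$, which is shown to satisfy $e<s<g$. The pair $(c,s)=(x,s)$ is then adjoined to the $\mathbf 2\times\mathbf 5$, and a separate technical lemma (Lemma~\ref{12 element subposet}) shows that any such twelve-element configuration inside a lattice in $\mathcal N$ with no doubly reducible elements forces a copy of $L_9$, $L_{12}$, or $L_{15}$. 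The case analysis inside that lemma repeatedly feeds Lemma~\ref{L15} with triples drawn from the full $\mathbf 2\times\mathbf 5$; without $b_1$ and $b_2$ there is not enough room to do this, and without $s$ the argument cannot isolate $L_9$ or $L_{12}$. Your plan for the second subcase should therefore be rewritten around these two ingredients rather than around $x\vee b_4$ and the chain $a\wedge b_i$.
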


\begin{remark} The dual of any lattice in $\mathcal{N}$ is also in $\mathcal{N}$ and the dual of a sublattice of a free lattice is a sublattice of a free lattice. Hence, the dual of Theorem \ref{partial modularity 2} also holds.
\end{remark}

Before proving Theorem \ref{partial modularity 2}, we prove a lemma and define certain ordered pairs of sequences.

\begin{lemma}\label{12 element subposet} Let $L \in \mathcal{N}$, and assume that $L$ has no doubly reducible elements. Then, it is impossible for there to be a twelve-element subposet of $L$ as depicted below where the sublattice of $L$ generated by $\{x', x, b, z, z', w', w, a, y, y'\}$ is isomorphic to $\textbf{2} \times \textbf{5}$.
\begin{center}
\begin{tikzpicture}[scale=0.8]
\node (c) at (1,1) {$c$};
\node (x') at (-4,0) {$x'$};
\node (x) at (-2.5,1.5) {$x$};
\node (b) at (-1,3) {$b$};
\node (z) at (1,5) {$z$};
\node (z') at (2.5,6.5) {$z'$};
\node (w') at (0,-4) {$w'$};
\node (w) at (1.5,-2.5) {$w$};
\node (a) at (3,-1) {$a$};
\node (s) at (4,0) {$s$};
\node (y) at (5,1) {$y$};
\node (y') at (6.5,2.5) {$y'$};

\draw (x') -- (x) -- (b) -- (z) -- (z') -- (y') -- (y) -- (s) -- (a) -- (w) -- (w') -- (x');
\draw (x) -- (w);
\draw (b) -- (c) -- (a);
\draw (z) -- (y);
\end{tikzpicture}
\end{center}
\end{lemma}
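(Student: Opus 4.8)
The plan is to suppose that such a twelve-element subposet occurs in $L$ and derive a contradiction, using the three things we may rely on: $L$ is semidistributive (since $\mathcal{N}$ is a semidistributive variety), $L$ has no doubly reducible elements, and $L$ has no sublattice isomorphic to any of $M_3,L_1,\dots,L_{15}$ (each of which lies outside $\mathcal{N}$, a variety, hence closed under sublattices).

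First I would fix coordinates for the copy of $\textbf{2}\times\textbf{5}$ generated by the ten marked elements, writing its two maximal ``column'' chains as $w'<w<a<y<y'$ and $x'<x<b<z<z'$ with rungs $w'<x'$, $w<x$, $a<b$, $y<z$, $y'<z'$; then $a=b\wedge y$, $z=b\vee y$, $w=x\wedge y$, $b=a\vee x=a\vee x'$, and the intervals from $a$ to $b$ and from $a$ to $y$ are coverings inside $\textbf{2}\times\textbf{5}$, so the new points $c$ (with $a<c<b$) and $s$ (with $a<s<y$) are genuinely not in that sublattice. I would then record once and for all the comparabilities among the twelve elements forced by $a<c<b$, $a<s<y$ and the $\textbf{2}\times\textbf{5}$ relations: one obtains $c\parallel s$, with $c$ incomparable to each of $x,x',y,y'$ and $s$ incomparable to each of $x,x',b,c$, together with $c\wedge s=c\wedge y=c\wedge y'=s\wedge b=a$, $s\wedge x=w$, $c\vee x=c\vee x'=b$, and a handful of similar identities. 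Semidistributivity would be used throughout this bookkeeping, both to force meets down to $a$ or $w'$ and joins up to $z$ or $z'$ wherever possible and, later, to verify that the vertex sets I write down are closed under $\vee$ and $\wedge$.

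After this the configuration is controlled by a short list of ``free'' quantities: the joins $c\vee s$, $c\vee y$, $b\vee s$, each lying below $z=b\vee y$ and pinned further by monotonicity (for instance, if $c\vee y\ge b$ then $c\vee y\ge b\vee y=z$, so $c\vee y$ is either $z$ or incomparable to $b$); and the meets $c\wedge x$, $c\wedge x'$, each lying in an interval that is a covering in $\textbf{2}\times\textbf{5}$, hence equal to an endpoint or genuinely new. I would run a case analysis over these alternatives. In each branch the aim is to exhibit an explicit sublattice of $L$ isomorphic to one of $M_3,L_1,\dots,L_{15}$, contradicting $L\in\mathcal{N}$, assembled from the elements already named plus, where needed, one or two further joins and meets such as $(c\vee y)\wedge z$, $(c\vee s)\vee b$, $x\vee s$, or $(c\wedge x)\vee s$; alternatively, when a branch produces an element that is simultaneously a join of two incomparable elements and a meet of two incomparable elements, that already contradicts the no-doubly-reducible-elements hypothesis. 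Whenever a branch turns up six elements $a_1<a_2<a_3$, $b_1<b_2<b_3$ exhibiting the crossing-and-incomparability pattern required by Lemma \ref{L15}, I would simply quote that lemma to obtain an $L_{15}$ sublattice.

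The routine part is checking, inside each branch, that the listed elements are pairwise distinct and closed under the operations in the claimed Hasse diagram; the genuine obstacle is the organization of the case analysis — enumerating the branches without gaps or overlap and correctly matching each to one of the ten semidistributive McKenzie lattices (since $\textbf{2}\times\textbf{5}$ already has length $5$, the tall lattices $L_7,L_8,L_{13},L_{14},L_{15}$ are the ones I expect to appear). I anticipate the hardest single case to be the one in which $c\vee s$, $c\vee y$ and $b\vee s$ are all strictly below $z$: then none of the easy collapses fire, these joins generate several further new elements above $c$ and $s$, and one must track them carefully before a forbidden sublattice emerges.
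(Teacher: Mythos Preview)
Your overall strategy---case analysis on where certain joins and meets of $c$ and $s$ land, followed by the explicit identification of a forbidden McKenzie sublattice or an appeal to Lemma~\ref{L15}---is the same as the paper's. But your proposed organization of the cases is both more complicated than necessary and aimed at the wrong targets, and the proposal never actually closes any case.

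The paper's case split is a clean $2\times 2$: it branches only on whether $c\vee y=z$ or $c\vee y<z$, and whether $c\wedge x'=w'$ or $c\wedge x'>w'$. The element $s$ plays no role at all in two of the four cases and enters only late in the third. By contrast, you front-load $c\vee s$, $b\vee s$, $c\wedge x$ as primary case variables, which multiplies the branching without payoff; the ``hardest single case'' you flag (all three joins strictly below $z$) is precisely the morass this creates, and you leave it unresolved. Second, your prediction that the tall lattices $L_7,L_8,L_{13},L_{14},L_{15}$ are the relevant ones is off: the paper in fact produces sublattices isomorphic to $L_3$ (whose non-semidistributivity gives the contradiction when $c\vee y=z$ and $c\wedge x'=w'$), $L_9$, $L_{12}$, and $L_{15}$. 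None of $L_7,L_8,L_{13},L_{14}$ appear. If you reorganize around the $2\times 2$ split on $c\vee y$ and $c\wedge x'$ and look for $L_3,L_9,L_{12},L_{15}$, the argument goes through; as written, the proposal is a reasonable plan but not yet a proof, and its case structure would need to be reworked before it could be completed.
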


\begin{proof} If $c \vee y = z$ and $c \wedge x' = w'$, then $\{x', b, z, c, w', a, y \}$ generates a sublattice of $L$ isomorphic to $L_3$. However, $L$ is semidistributive but $L_3$ is not, so that is impossible. Suppose that $c \vee y < z$ and $c \wedge x' > w'$. As $x' \ngeq y$, $c \wedge x' \ngeq y$. If $c \wedge x' \leq y$, then $w' < c \wedge x' \leq x' \wedge y$. But, as the sublattice of $L$ generated by $\{x', b, z, w', a, y,\}$ is isomorphic to $\textbf{2} \times \textbf{3}$, $w' = x' \wedge y$. Hence, $c \wedge x' \parallel y$. By symmetry, $c \vee y \parallel x'$. Lastly, as the sublattice of $L$ generated by $\{x', b, z, w', a, y,\}$ is isomorphic to $\textbf{2} \times \textbf{3}$, it follows that $b \parallel y$, $x' \parallel a$, $x' \vee y = b \vee (c \vee y)$, and $x' \wedge y = (c \wedge x') \wedge a$. Hence, as $L \in \mathcal{N}$ and $L$ has no doubly reducible elements, $c \wedge x'$, $x'$, $b$, $a$, $y$, and $c \vee y$, depicted below, satisfy the conditions of Lemma \ref{L15}.

\begin{center}
\begin{tikzpicture}[scale=1.2]
\node (x) at (0,0) {$x'$};
\node (b) at (1,1) {$b$};
\node (z) at (2,2) {$z$};
\node (cy) at (3,1) {$c \vee y$};
\node (y) at (4,0) {$y$};
\node (a) at (3,-1) {$a$};
\node (w) at (2,-2) {$w'$};
\node (cx) at (1,-1) {$c \wedge x'$};

\draw (x) -- (b) -- (z) -- (cy) -- (y) -- (a) -- (w) -- (cx) -- (x);
\draw (b) -- (a);
\draw (cx) -- (cy);
\end{tikzpicture}
\end{center}

It follows by Lemma \ref{L15} that $\{x', b, c, a, y \}$ generates a sublattice of $L$ that contains a sublattice isomorphic to $L_{15}$. But that is impossible because $L_{15} \notin \mathcal{N}$. \\

Suppose that $c \vee y < z$ and $c \wedge x' = w'$. This is depicted below.

\begin{center}
\begin{tikzpicture}[scale=0.75]
\node (c) at (1,1) {$(c \vee y) \wedge b$};
\node (x) at (-3,1) {$x'$};
\node (b) at (-1,3) {$b$};
\node (z) at (1,5) {$z$};
\node (w) at (1,-3) {$w'$};
\node (a) at (3,-1) {$a$};
\node (y) at (5,1) {$y$};
\node (cy) at (3,3) {$c \vee y$};
\node (z') at (3,7) {$z'$};
\node (y') at (7,3) {$y'$};
\node (s) at (4,0) {$s$};

\draw (a) -- (s) -- (y);

\draw (x) -- (b) -- (z) -- (cy) -- (y) -- (s) -- (a) -- (w);
\draw (x) -- (w);
\draw (b) -- (c) -- (a);
\draw (z) -- (z') -- (y') -- (y);
\draw (c) -- (cy);
\end{tikzpicture}
\end{center}

Since $a \leq ((c \vee y) \wedge b) \wedge y \leq b \wedge y = a$, $((c \vee y) \wedge b) \wedge y = a$. Because $x' \ngeq y$, $x' \geq c \vee y$. Moreover, as $z = x' \vee y$, $x' \nleq c \vee y$. So $x' \parallel c \vee y$. Furthermore, as $c \leq (c \vee y) \wedge b$, $((c \vee y) \wedge b) \vee y = c \vee y$. Lastly, if $((c \vee y) \wedge b) \wedge x' \leq y$, then $((c \vee y) \wedge b) \wedge x' \leq y \wedge x' = w'$. So if $((c \vee y) \wedge b) \wedge x' > w'$, then as $L$ has no doubly reducible elements, Lemma \ref{L15} implies that $\{((c \vee y) \wedge b) \wedge x', a, x', y, b, c \vee y \}$ generates a sublattice of $L$ that contains a sublattice isomorphic to $L_{15}$. But then, $L_{15}$ is a sublattice of $L$, which is impossible because $L_{15} \notin \mathcal{N}$. \\

Hence, $x' \wedge ((c \vee y) \wedge b) = w'$. So as $x' \vee ((c \vee y) \wedge b) = x' \vee a = b$, the sublattice of $L$ generated by $\{x', b, a, w', (c \vee y) \wedge b, a, y \}$ is isomorphic to the lattice depicted below.
\begin{center}
\begin{tikzpicture}[scale=0.5]
\node (c) at (1,1) {$\circ$};
\node (x) at (-3,1) {$\circ$};
\node (b) at (-1,3) {$\circ$};
\node (z) at (1,5) {$\circ$};
\node (w) at (1,-3) {$\circ$};
\node (a) at (3,-1) {$\circ$};
\node (y) at (5,1) {$\circ$};
\node (cy) at (3,3) {$\circ$};

\draw (x) -- (b) -- (z) -- (cy) -- (y) -- (a) -- (w);
\draw (x) -- (w);
\draw (b) -- (c) -- (a);
\draw (c) -- (cy);
\end{tikzpicture}
\end{center}

We note that $s \parallel b$ for the following reasons. Since $s < y$ and $b \nleq y$, $b \nleq s$. Moreover, $s \leq b$ implies that $s \leq b \wedge y = a$ as $s \leq y$. So, $s \parallel b$. Lastly, recall that $L$ has no doubly reducible elements. \\

If $s \vee b < z$, then by Lemma \ref{L15}, $\{b \vee s, c \vee y, b, y, (c \vee y) \wedge b, s \}$ generates a sublattice $K$ of $L$ such that $K$ contains $L_{15}$ as a sublattice. But that is impossible. \\

If $s \vee b = z$ and $s \vee ((c \vee y) \wedge b) = c \vee y$, then the sublattice of $L$ generated by $\{x', b, z, w', a, s, y, (c \vee y) \wedge b, c \vee y \}$ is isomorphic to $L_9$. However, $L_9 \notin \mathcal{N}$, so that is impossible. \\

If $s \vee b = z$ and if $s \vee ((c \vee y) \wedge b) < c \vee y$, then the sublattice of $L$ generated by $\{b, z, (c \vee y) \wedge b, s \vee ((c \vee y) \wedge b), c \vee y, a, (s \vee ((c \vee y) \wedge b)) \wedge y, y \}$, depicted by the left-most diagram shown below is isomorphic to the lattice depicted by the right-most diagram shown below.

\begin{center}
\begin{tikzpicture}[scale=0.75]
\node (middle) at (1,-1) {$s \vee ((c \vee y) \wedge b)$};
\node (b) at (-3,-1) {$b$};
\node (southwest) at (-1,-3) {$(c \vee y) \wedge b$};
\node (a) at (1,-5) {$a$};
\node (z) at (1,3) {$z$};
\node (northeast) at (3,1) {$c \vee y$};
\node (y) at (5,-1) {$y$};
\node (longest) at (3,-3) {$\;\;\;\;\;\;\;\;\;\;\;\; (s \vee ((c \vee y) \wedge b)) \wedge y$};

\draw (northeast) -- (y);

\draw (b) -- (southwest) -- (a) -- (longest) -- (y) -- (northeast) -- (z);
\draw (b) -- (z);
\draw (southwest) -- (middle) -- (northeast);
\draw (middle) -- (longest);
\end{tikzpicture}
\begin{tikzpicture}[scale=0.75]
\node (c) at (1,-1) {$\circ$};
\node (x) at (-3,-1) {$\circ$};
\node (b) at (-1,-3) {$\circ$};
\node (z) at (1,-5) {$\circ$};
\node (w) at (1,3) {$\circ$};
\node (a) at (3,1) {$\circ$};
\node (y) at (5,-1) {$\circ$};
\node (cy) at (3,-3) {$\circ$};

\draw (x) -- (b) -- (z) -- (cy) -- (y) -- (a) -- (w);
\draw (x) -- (w);
\draw (b) -- (c) -- (a);
\draw (c) -- (cy);
\end{tikzpicture}
\end{center}

Consider the subposet of $L$ depicted below.

\begin{center}
\begin{tikzpicture}[scale=0.75]
\node (middle) at (1,-1) {$s \vee ((c \vee y) \wedge b)$};
\node (b) at (-3,-1) {$b$};
\node (southwest) at (-1,-3) {$(c \vee y) \wedge b$};
\node (a) at (1,-5) {$a$};
\node (z) at (1,3) {$z$};
\node (northeast) at (3,1) {$c \vee y$};
\node (y) at (5,-1) {$y$};
\node (longest) at (3,-3) {$\;\;\;\;\;\;\;\;\;\;\;\; (s \vee ((c \vee y) \wedge b)) \wedge y$};
\node (z') at (3,5) {$z'$};
\node (y') at (7,1) {$y'$};

\draw (northeast) -- (y);

\draw (b) -- (southwest) -- (a) -- (longest) -- (y) -- (northeast) -- (z);
\draw (b) -- (z);
\draw (southwest) -- (middle) -- (northeast);
\draw (middle) -- (longest);
\draw (z) -- (z') -- (y') -- (y);
\end{tikzpicture}
\end{center}

We note the following. Since $b \ngeq y$, $(c \vee y) \wedge b \ngeq y$. Moreover, $(c \vee y) \wedge b \leq y$ implies that $(c \vee y) \wedge b \leq b \wedge y = a$, which is impossible. So $(c \vee y) \wedge b \parallel y$. Similarly, $(c \vee y) \wedge b \parallel y'$. \\

If $(c \vee y) \vee y' < z'$, then, by using the dual of the argument we used to show that $(c \vee y) \wedge b \parallel y$, we see that $(c \vee y) \vee y' \parallel z$ and $(c \vee y) \vee y' \parallel b$. Moreover, $L$ has no doubly reducible elements, and as the sublattice of $L$ generated by $\{b, z, z', a, y, y' \}$ is isomorphic to $\textbf{2} \times \textbf{3}$, it follows that $(c \vee y) \wedge b$, $b$, $z$, $y$, $y'$, and $(c \vee y) \vee y'$ satisfy the conditions of Lemma \ref{L15}. Hence, by Lemma \ref{L15}, $L$ contains a sublattice isomorphic to $L_{15}$. But that is impossible because $L_{15} \notin \mathcal{N}$. \\

If $(c \vee y) \vee y' = z'$, then, as $(c \vee y) \wedge y' = z \wedge y' = y$, the poset depicted above is a sublattice of $L$ that is isomorphic to $L_{12}$. But as $L_{12} \notin \mathcal{N}$, that is impossible. \\

Lastly, suppose that $c \vee y = z$ and that $c \wedge x' > w'$. Since
$$x \wedge a \geq w > w',$$
the dual of the proof for the case when $c \vee y < z$ and $c \wedge x = w$ implies that $L$ contains a sublattice that is isomorphic to $L_{15}$. But that is impossible as $L_{15} \notin \mathcal{N}$.
\end{proof}

We now prove Theorem \ref{partial modularity 2}. 

\begin{proof} Suppose that $(a \vee b_4) \wedge b_5 \neq b_4$ and that $(a \vee b_3) \wedge b_5$ is not covered by $a \vee b_3$. For all $1 \leq i \leq 5$, $(a \vee b_i) \wedge b_5 \geq b_i$. Hence, for all $1 \leq i < j \leq 5$, $a \vee b_i \ngeq (a \vee b_j) \wedge b_5$. Moreover, as $a \nleq b_5$, $a \vee b_i \nleq (a \vee b_j) \wedge b_5$ for all $1 \leq i < j \leq 5$. Hence, for all $1 \leq i < j \leq 5$, $a \vee b_i \parallel (a \vee b_j) \wedge b_5$. It follows that the sublattice of $L$ generated by 
$$\{a \vee b_i : 1 \leq i \leq 5 \} \cup \{(a \vee b_i) \wedge b_5 : 1 \leq i \leq 5 \}$$ is isomorphic to $\textbf{2} \times \textbf{5}$. \\

Now consider the element $((a \vee b_3) \wedge b_5) \vee b_4$. If $(a \vee b_3) \wedge b_5 \geq b_4$, then $a \vee b_3 \geq (a \vee b_3) \wedge b_5 \geq b_4$, but that is impossible as $a \vee b_3 \ngeq b_4$. So $(a \vee b_3) \wedge b_5 \ngeq b_4$. In particular,
$$(a \vee b_3) \wedge b_5 < ((a \vee b_3) \wedge b_5) \vee b_4.$$

If $(a \vee b_3) \wedge b_5 < b_4$, then, as $(a \vee b_4) \wedge b_5 > b_4$,
$$(a \vee b_3) \wedge b_5 < b_4 = ((a \vee b_3) \wedge b_5) \vee b_4 = b_4 < (a \vee b_4) \wedge b_5.$$

If $(a \vee b_3) \wedge b_5 \nless b_4$, then, as $(a \vee b_3) \wedge b_5 \ngeq b_4$, $(a \vee b_3) \wedge b_5 \parallel b_4$. This is depicted below.

\begin{center}
\begin{tikzpicture}[scale = 1.5]
\node (A) at (0,0) {$a \vee b_3$};
\node (B) at (1,1) {$a \vee b_4$};
\node (C) at (2,2) {$a \vee b_5$};
\node (D) at (1,-1) {$(a \vee b_3) \wedge b_5$};
\node (E) at (2,0) {$(a \vee b_4) \wedge b_5$};
\node (F) at (3,1) {$b_5$};
\node (G) at (2,-2) {$b_3$};
\node (H) at (3,-1) {$b_4$};

\draw (B) -- (E) -- (D);
\draw (A) -- (B) -- (C) -- (F) -- (E) -- (H) -- (G) -- (D) -- (A);
\end{tikzpicture}
\end{center}

Because $L$ has no doubly reducible elements, because $(a \vee b_3) \wedge b_5 \parallel b_4$, and because $a \vee b_4 \parallel b_5$, it follows that
$$(a \vee b_3) \wedge b_5 < ((a \vee b_3) \wedge b_5) \vee b_4 < (a \vee b_4) \wedge b_5. $$

Therefore, in either case,
$$(a \vee b_3) \wedge b_5 < ((a \vee b_3) \wedge b_5) \vee b_4 < (a \vee b_4) \wedge b_5. $$
It follows that the subposet depicted below satisfies the conditions of Lemma \ref{12 element subposet}. But, by Lemma \ref{12 element subposet}, that is impossible. 
\begin{center}
\begin{tikzpicture}[scale=1.2]
\node (c) at (1,1) {$c$};
\node (x') at (-4,0) {$a \vee b_1$};
\node (x) at (-2.5,1.5) {$a \vee b_2$};
\node (b) at (-1,3) {$a \vee b_3$};
\node (z) at (1,5) {$a \vee b_4$};
\node (z') at (2.5,6.5) {$a \vee b_5$};
\node (w') at (0,-4) {$(a \vee b_1) \wedge b_5$};
\node (w) at (1.5,-2.5) {$(a \vee b_2) \wedge b_5$};
\node (a) at (3,-1) {$(a \vee b_3) \wedge b_5$};
\node (s) at (4,0) {$\;\;\;\; ((a \vee b_3) \wedge b_5) \vee b_4$};
\node (y) at (5,1) {$(a \vee b_4) \wedge b_5$};
\node (y') at (6.5,2.5) {$b_5$};

\draw (x') -- (x) -- (b) -- (z) -- (z') -- (y') -- (y) -- (s) -- (a) -- (w) -- (w') -- (x');
\draw (x) -- (w);
\draw (b) -- (c) -- (a);
\draw (z) -- (y);
\end{tikzpicture}
\end{center}
This completes the proof of the theorem.
\end{proof}

\begin{remark} In the proof of Theorem \ref{partial modularity 2}, we only used the assumptions that $L \in \mathcal{N}$ and $L$ has no doubly reducible elements. Hence, Theorem \ref{partial modularity 2} describes a property of lattices $L \in \mathcal{N}$ that have no doubly reducible elements.
\end{remark}

\section{Extensions}\label{sec:extensions}

The results of this paper also apply, to a lesser degree, to more general sublattices of free lattices. This is because the lattices $L_i$, for $1 \leq i \leq 15$, from McKenzie's list are connected with other known varieties of lattices. In this section, we explain how the results of this paper can be partially extended to lattices from seven known infinite sequences of semidistributive lattice varieties. \\

Recall that for all lattice $L \in \mathcal{N}$, the dual of $L$ is also in $\mathcal{N}$. Hence, by Theorem \ref{cube theorem}, Theorem \ref{partial modularity 1}, and Theorem \ref{partial modularity 2}, if $L \in \mathcal{N}$ and if $L$ is isomorphic to a sublattice of a free lattice, then $L$ satisfies Theorem \ref{cube theorem}, the dual of Theorem \ref{cube theorem}, Theorem \ref{partial modularity 1}, Theorem \ref{partial modularity 2}, and the dual of Theorem \ref{partial modularity 2}. \\

We now consider more general semidistributive varieties. For all $1 \leq i \leq 15$, let $\mathcal{L}_i$ denote the variety of lattices generated by $L_i$. Rose proved the following in 1984.

\begin{theorem}(Rose, (\cite{VL}, p. 77), \cite{Rose})\label{Roses varieties} Assume that $i \in \{6, 7, 8, 9, 10, 13, 14, 15\}$. Then there exists an infinite sequence 
$$\mathcal{L}_0^i \subset \mathcal{L}_i^1 \subset \mathcal{L}_i^2 \subset \mathcal{L}_i^3 \subset \dots$$ 
of semidistributive lattice varieties such that $\mathcal{L}^i_0 = \mathcal{L}$, and, for all $k \in \mathbb{N}_0$, the following properties hold. The variety $\mathcal{L}^i_{k+1}$ is generated by a finite subdirectly irreducible lattice $L^i_{k+1}$ and $\mathcal{L}^i_{k+1}$ is the unique join-irreducible variety that covers $\mathcal{L}^i_k$.
\end{theorem}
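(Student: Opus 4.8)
The plan is to observe that Theorem \ref{Roses varieties} is not new here but is a theorem of Rose \cite{Rose} (see also the exposition in \cite{VL}, p.~77), so that the work consists of invoking it; the reason for stating it is that the corollaries of this section are deduced from it. It is still worth recalling the shape of Rose's argument, since it clarifies both the role of semidistributivity and why $L_{11}$ and $L_{12}$ are excluded from the index set.

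The first ingredient is an explicit construction. For each $i \in \{6,7,8,9,10,13,14,15\}$ one writes down, uniformly, a sequence of finite lattices $L^i_1, L^i_2, L^i_3, \dots$, each obtained from its predecessor (with $L_i$ as predecessor of $L^i_1$) by a single ``lengthening'' step --- roughly, by stretching a fixed critical quotient of the lattice, the one that generates its monolithic congruence, by one more rung, while keeping the lattice finite. One then verifies, by inspecting the join-- and meet--dependency relations (equivalently, by realizing $L^i_k$ as an iterated interval doubling in the sense of Day), that every $L^i_k$ is finite, subdirectly irreducible, and \emph{bounded}, i.e.\ a bounded homomorphic image of a free lattice, hence a splitting lattice. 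Since bounded lattices are semidistributive, every $\mathcal{L}^i_k = \mathrm{var}(L^i_k)$ is a semidistributive variety; alternatively, this follows from Theorem \ref{Jonsson Rival}, as none of $M_3, L_1, \dots, L_5$ embeds into any $L^i_k$.

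The second ingredient is the covering and uniqueness claim. Here the machinery is McKenzie's theory of splitting lattices together with the fact that lattice varieties are congruence-distributive, so J\'onsson's Lemma applies: since each $\mathcal{L}^i_k$ is finitely generated, the subdirectly irreducible members of a join $\mathcal{L}^i_k \vee \mathcal{W}$ lie in $\mathcal{L}^i_k$ or in $\mathcal{W}$, whence the join-irreducible varieties covering $\mathcal{L}^i_k$ are precisely the varieties generated by finite subdirectly irreducible lattices that are minimal with respect to not belonging to $\mathcal{L}^i_k$. Rose then checks, by a finite case analysis done separately for each of the eight families, that $L^i_{k+1} \notin \mathcal{L}^i_k$, that it is such a minimal lattice, and that it is the only one up to isomorphism; the splitting equation conjugate to the splitting lattice $L^i_{k+1}$ cuts the lattice of lattice varieties immediately above $\mathcal{L}^i_k$, yielding simultaneously the proper inclusion $\mathcal{L}^i_k \subset \mathcal{L}^i_{k+1}$ and the uniqueness of this cover. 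The base case $\mathcal{L}^i_0 = \mathcal{L}$ is read off from the construction, and this same minimality analysis is exactly what breaks down for $L_{11}$ and $L_{12}$, where the minimal ``extension'' lattice is not unique --- hence their exclusion.

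The main obstacle, and the reason for citing rather than reproving, is precisely this finite but delicate case analysis: one must check, uniformly in $k$ and separately for $L_6, L_7, L_8, L_9, L_{10}, L_{13}, L_{14}$, and $L_{15}$, that the lengthening construction yields a bounded subdirectly irreducible lattice and that no other lattice is minimal outside $\mathcal{L}^i_k$. All of this is carried out in detail in \cite{Rose}, which we invoke.
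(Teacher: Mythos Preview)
Your proposal is correct and matches the paper's approach: the paper does not prove Theorem \ref{Roses varieties} at all but simply states it with attribution to Rose \cite{Rose} and \cite{VL}, using it as a black box for the corollaries that follow. Your additional sketch of Rose's argument (the lengthening construction, boundedness/splitting, and J\'onsson's Lemma for the covering analysis) goes beyond what the paper provides but is accurate and helpful context.
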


By Theorem \ref{Jonsson Rival}, all of the lattice varieties in Theorem \ref{Roses varieties} are semidistritutive varieties. The proofs of Theorem \ref{cube theorem}, Theorem \ref{partial modularity 1}, and Theorem \ref{partial modularity 2}, only rely on the fact that certain lattices from McKenzie's list $L_i$ for $1 \leq i \leq 15$ are forbidden as sublattices. So, by keeping track of which lattices are forbidden, and by using Theorem \ref{Roses varieties}, we obtain a number of consequences for seven of the eight sequences of semidistributive varieties in Theorem \ref{Roses varieties}.

\begin{corollary}\label{c1} Assume that $i \in \{6, 7, 8\}$, and let $n \in \mathbb{N}_0$. If $L$ is isomorphic to a sublattice of a free lattice and if $L \in \mathcal{L}_i^n$, then $L$ satisfies Theorem \ref{cube theorem}, Theorem \ref{partial modularity 2}, and the dual of Theorem \ref{partial modularity 2}.
\end{corollary}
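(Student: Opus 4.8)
The plan is to exploit the fact that the proofs of Theorem~\ref{cube theorem} and Theorem~\ref{partial modularity 2} are uniform in the ambient variety. Running through those proofs --- together with Lemma~\ref{L15} and Lemma~\ref{12 element subposet} --- the only properties of $L$ ever invoked are that $L$ has no doubly reducible elements (which, for a sublattice of a free lattice, is Theorem~\ref{Whitmans Theorem}), that $L$ is semidistributive, and that $L$ has no sublattice isomorphic to certain members of McKenzie's list. So the first task is to record which members. Every contradiction reached in the proof of Theorem~\ref{cube theorem} comes either from semidistributivity (the appeals to ``$L_2,L_3$ not semidistributive'' and to the semidistributive laws) or from a sublattice isomorphic to one of $L_{11},L_{12},L_{13},L_{14},L_{15}$; every contradiction in the proof of Theorem~\ref{partial modularity 2} comes from semidistributivity (``$L_3$ not semidistributive'') or from a sublattice isomorphic to one of $L_9,L_{12},L_{15}$. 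This also pinpoints why Theorem~\ref{partial modularity 1} is absent from the statement: its proof, through Lemma~\ref{degeneracy lemma}, additionally forbids $L_6,L_7,L_8$, and these belong to $\mathcal{L}_i$ for $i\in\{6,7,8\}$.

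It therefore suffices to prove: for $i\in\{6,7,8\}$ and $n\in\mathbb{N}_0$, the variety $\mathcal{L}_i^n$ is semidistributive and, among $L_1,\dots,L_{15}$, contains only $L_i$. Granting this, $\mathcal{L}_i^n$ omits $L_9,\dots,L_{15}$, so any $L\in\mathcal{L}_i^n$ isomorphic to a sublattice of a free lattice satisfies Theorem~\ref{cube theorem} and Theorem~\ref{partial modularity 2} by rerunning their proofs verbatim. For the dual of Theorem~\ref{partial modularity 2}, apply the conclusion just obtained to $L^{\partial}$ --- again a sublattice of a free lattice --- inside $(\mathcal{L}_i^n)^{\partial}$; this variety is semidistributive (the class of semidistributive lattices being self-dual), and since McKenzie's list is closed under order-duality ($\mathcal{N}$ being self-dual) and $\mathcal{L}_i^n$ omits all of $L_1,\dots,L_{15}$ except $L_i$, the variety $(\mathcal{L}_i^n)^{\partial}$ omits all of them except $L_i^{\partial}$, which still generates one of $\mathcal{L}_6,\mathcal{L}_7,\mathcal{L}_8$ (since $L_6$ is self-dual and $L_7^{\partial}\cong L_8$, as one reads off the diagrams of $L_6,L_7,L_8$). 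Semidistributivity of $\mathcal{L}_i^n$ is immediate from Theorem~\ref{Roses varieties} and Theorem~\ref{Jonsson Rival}, and already removes $M_3,L_1,\dots,L_5$ from consideration.

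The remaining point is that for $i\in\{6,7,8\}$ and $j\in\{6,\dots,15\}\setminus\{i\}$ one has $L_j\notin\mathcal{L}_i^n$. Since $L_j$ generates a cover $\mathcal{L}_j\ne\mathcal{L}_i$ of $\mathcal{N}$, it is enough to show that $\mathcal{L}_i$ is the only cover of $\mathcal{N}$ lying below $\mathcal{L}_i^n$. I would obtain this by downward induction on $n$ from the chain in Theorem~\ref{Roses varieties}, using that $\mathcal{L}_i^0=\mathcal{L}_i$ is a cover of $\mathcal{N}$, that each $\mathcal{L}_i^{k+1}$ properly contains $\mathcal{L}_i$ (hence is not itself a cover of $\mathcal{N}$), and that each $\mathcal{L}_i^{k+1}$ is completely join-irreducible with lower cover $\mathcal{L}_i^{k}$, so that every variety properly contained in $\mathcal{L}_i^{k+1}$ is contained in $\mathcal{L}_i^{k}$. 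The main obstacle is exactly that last assertion --- complete join-irreducibility, i.e.\ that $\mathcal{L}_i^{k+1}$ has a \emph{unique} lower cover --- which is slightly stronger than what Theorem~\ref{Roses varieties} literally records. To secure it I would invoke J\'onsson's Lemma: $\mathcal{L}_i^n=V(L_i^n)$ is generated by a finite subdirectly irreducible lattice, so every subdirectly irreducible member of $\mathcal{L}_i^n$ lies in $HS(L_i^n)$; one then reads off, from Rose's explicit construction of the $L_i^n$, that the only subdirectly irreducible in $HS(L_i^n)$ lying outside $\mathcal{L}_i^{n-1}$ is $L_i^n$ itself. This yields both the complete join-irreducibility used in the induction and, directly, that no $L_j$ with $j\ne i$ belongs to $\mathcal{L}_i^n$, completing the proof.
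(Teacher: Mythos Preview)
Your proposal is correct and follows essentially the same approach as the paper: identify which of McKenzie's lattices are actually used as forbidden sublattices in the proofs of Theorem~\ref{cube theorem} and Theorem~\ref{partial modularity 2} (and its dual), observe that $\mathcal{L}_i^n$ for $i\in\{6,7,8\}$ is semidistributive and omits $L_9,\dots,L_{15}$, and conclude. The paper simply asserts the omission of $L_9,\dots,L_{15}$ and directly lists $L_{10},L_{11},L_{15}$ as the forbidden lattices for the dual of Theorem~\ref{partial modularity 2}, whereas you supply additional justification via J\'onsson's Lemma and complete join-irreducibility of the $\mathcal{L}_i^k$, and handle the dual by passing to $L^\partial$ and $(\mathcal{L}_i^n)^\partial$; these are cosmetic differences in the level of detail, not differences in strategy.
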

\begin{proof} If $L$ is isomorphic to a sublattice of a free lattice, if $L \in \mathcal{L}_i^n$, if $n \in \mathbb{N}_0$, and if $i \in \{6, 7, 8\}$, then $L$ does not have a sublattice that is isomorphic to any of the following lattices: $L_9$, $L_{10}$, $L_{11}$, $L_{12}$, $L_{13}$, $L_{14}$, and $L_{15}$. So as the proof of Theorem \ref{cube theorem} relies on $L_{11}$, $L_{12}$, $L_{13}$, $L_{14}$, and $L_{15}$ being forbidden as sublattices, as the proof of Theorem \ref{partial modularity 2} relies on $L_9$, $L_{12}$, and $L_{15}$ being forbidden as sublattices, as the proof of the dual of Theorem \ref{partial modularity 2} relies on $L_{10}$, $L_{11}$, and $L_{15}$ being forbidden as sublattices, and as the proof relies on $\mathcal{N}$ being a semidistributive variety, the corollary follows since $\mathcal{L}_i^n$ is a semidistributive variety.
\end{proof}

\begin{corollary}\label{c2} Let $n \in \mathbb{N}_0$. If $L$ is isomorphic to a sublattice of a free lattice and if $L \in \mathcal{L}_9^n$, then $L$ satisfies Theorem \ref{cube theorem} and the dual of Theorem \ref{partial modularity 2}.
\end{corollary}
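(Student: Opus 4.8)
The plan is to argue exactly as in the proof of Corollary \ref{c1}, the only change being to bookkeep which lattices from McKenzie's list are excluded from $\mathcal{L}_9^n$. First I would record what membership in $\mathcal{L}_9^n$ forbids. By Theorem \ref{Roses varieties}, every $\mathcal{L}_9^n$ is a semidistributive variety, so Theorem \ref{Jonsson Rival} gives $M_3, L_1, L_2, L_3, L_4, L_5 \notin \mathcal{L}_9^n$; and, just as in Corollary \ref{c1}, the Rose sequence attached to $L_9$ is a chain of join-irreducible covers whose first proper term is $\mathcal{L}_9$, so the only lattice from McKenzie's fifteen-element list lying in any $\mathcal{L}_9^n$ is $L_9$ itself. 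Hence $L_6, L_7, L_8, L_{10}, L_{11}, L_{12}, L_{13}, L_{14}, L_{15} \notin \mathcal{L}_9^n$. Consequently, if $L$ is isomorphic to a sublattice of a free lattice and $L \in \mathcal{L}_9^n$, then $L$ is semidistributive, has no doubly reducible elements by Theorem \ref{Whitmans Theorem}, and has no sublattice isomorphic to any of $L_6, L_7, L_8, L_{10}, L_{11}, L_{12}, L_{13}, L_{14}, L_{15}$.

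Next I would invoke the remarks following Theorem \ref{cube theorem} and Theorem \ref{partial modularity 2}. The proof of Theorem \ref{cube theorem} uses only that $L$ is a semidistributive lattice with no doubly reducible elements together with the exclusion of $L_{11}, L_{12}, L_{13}, L_{14}, L_{15}$ as sublattices; all of these hold for $L$, so $L$ satisfies Theorem \ref{cube theorem}. Likewise, the proof of the dual of Theorem \ref{partial modularity 2} uses only those same hypotheses on $L$ together with the exclusion of the duals of $L_9, L_{12}, L_{15}$, namely $L_{10}, L_{11}, L_{15}$; these too hold for $L$, so $L$ satisfies the dual of Theorem \ref{partial modularity 2}. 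This gives exactly the assertion of the corollary.

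I do not expect a genuine obstacle; the single point that needs care is the bookkeeping in the first paragraph, and in particular the observation that $L_9 \in \mathcal{L}_9^n$, so neither the proof of Theorem \ref{partial modularity 1} (which passes through Lemma \ref{degeneracy lemma} and requires $L_9$ to be forbidden) nor the proof of Theorem \ref{partial modularity 2} itself (which also requires $L_9$ to be forbidden) can be run for members of $\mathcal{L}_9^n$. That is precisely why those two statements are omitted from the conclusion, while Theorem \ref{cube theorem} and the dual of Theorem \ref{partial modularity 2} — whose proofs never use $L_9$ — are retained.
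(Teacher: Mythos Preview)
Your proposal is correct and follows essentially the same approach as the paper's proof: both record that membership in $\mathcal{L}_9^n$ excludes $L_{10}, L_{11}, L_{12}, L_{13}, L_{14}, L_{15}$ as sublattices, note that $\mathcal{L}_9^n$ is semidistributive, and then observe that the proofs of Theorem \ref{cube theorem} and the dual of Theorem \ref{partial modularity 2} use only these exclusions. One minor imprecision: the proof of Theorem \ref{cube theorem} actually invokes Whitman's condition itself at one point (not merely the absence of doubly reducible elements), but since your hypothesis is that $L$ is a sublattice of a free lattice this is harmless.
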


\begin{proof} If $L$ is isomorphic to a sublattice of a free lattice, if $n \in \mathbb{N}_0$, and if $L \in \mathcal{L}_9^n$, then $L$ does not have a sublattice that is isomorphic to any of the following lattices: $L_{10}$, $L_{11}$, $L_{12}$, $L_{13}$, $L_{14}$, and $L_{15}$. So as the proof of Theorem \ref{cube theorem} relies on $L_{11}$, $L_{12}$, $L_{13}$, $L_{14}$, and $L_{15}$ being forbidden as sublattices, and as the proof of the dual of Theorem \ref{partial modularity 2} relies on $L_{10}$, $L_{11}$, and $L_{15}$ being forbidden as sublattices, and as the proof relies on $\mathcal{N}$ is a semidistributive variety, the corollary follows since $\mathcal{L}_9^n$ is a semidistributive variety.
\end{proof}

\begin{corollary}\label{c3} Let $n \in \mathbb{N}_0$. If $L$ is isomorphic to a sublattice of a free lattice and if $L \in \mathcal{L}_{10}^n$, then $L$ satisfies Theorem \ref{cube theorem}, the dual of Theorem \ref{cube theorem}, and Theorem \ref{partial modularity 2}.
\end{corollary}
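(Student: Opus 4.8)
The plan is to argue exactly as in the proofs of Corollary~\ref{c1} and Corollary~\ref{c2}: first determine which lattices from McKenzie's list are excluded as sublattices of $L$, and then observe that the proofs of the three cited results rely only on those exclusions together with semidistributivity and the absence of doubly reducible elements.

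First I would record the exclusions. Since $L$ is isomorphic to a sublattice of a free lattice, $L$ satisfies Whitman's condition by Theorem~\ref{Whitmans Theorem}, hence has no doubly reducible elements, and $L$ is semidistributive, so by Theorem~\ref{Jonsson Rival} none of $M_3, L_1, L_2, L_3, L_4, L_5$ embeds in $L$. Since $L \in \mathcal{L}_{10}^n$ and every sublattice of $L$ again lies in $\mathcal{L}_{10}^n$, it suffices to know that $L_9, L_{11}, L_{12}, L_{13}, L_{14}, L_{15} \notin \mathcal{L}_{10}^n$; this follows from Theorem~\ref{Roses varieties}, exactly as the analogous exclusions do in Corollary~\ref{c1} and Corollary~\ref{c2}, and equivalently by duality from Corollary~\ref{c2} since $\mathcal{L}_{10}^n$ is the dual variety of $\mathcal{L}_9^n$. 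Thus $L$ has no sublattice isomorphic to any of $L_9, L_{11}, L_{12}, L_{13}, L_{14}, L_{15}$. Note that $L_{10}$ itself does lie in $\mathcal{L}_{10}^n$, which is exactly why the dual of Theorem~\ref{partial modularity 2}, whose proof requires $L_{10}$ to be forbidden, is not claimed here.

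Next I would carry out the bookkeeping. The proof of Theorem~\ref{cube theorem} invokes only that $L$ is semidistributive and that $L_{11}, L_{12}, L_{13}, L_{14}, L_{15}$ are not sublattices of $L$, so it applies verbatim. For the dual of Theorem~\ref{cube theorem}, dualizing that argument gives a proof valid for any lattice without doubly reducible elements in which the \emph{duals} of those same lattices are excluded; since $L_{11}$ and $L_{12}$ are mutually dual, $L_{13}, L_{14}, L_{15}$ are self-dual, and the dual of $L_2$ is the non-semidistributive lattice $L_1$, the required exclusions all hold for $L$, so the dual statement applies as well. Finally, the proof of Theorem~\ref{partial modularity 2}, which runs through Lemma~\ref{12 element subposet} and Lemma~\ref{L15}, uses only semidistributivity (to rule out $L_3$), the absence of doubly reducible elements, and the exclusion of $L_9, L_{12}, L_{15}$, all of which hold for $L$; hence Theorem~\ref{partial modularity 2} applies.

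The one step that is not pure bookkeeping, and which I expect to be the main (if modest) obstacle, is verifying $L_j \notin \mathcal{L}_{10}^n$ for $j \in \{9, 11, 12, 13, 14, 15\}$: this is a structural fact about Rose's covering chains rather than a computation, to be extracted from Theorem~\ref{Roses varieties} (or obtained by dualizing the corresponding step in the proof of Corollary~\ref{c2}). Everything after that is a direct transcription of the arguments of Section~\ref{sec:atoms and coatoms} and Section~\ref{sec:perpective properties}, precisely mirroring the proofs of Corollary~\ref{c1} and Corollary~\ref{c2}.
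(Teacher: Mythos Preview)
Your approach is essentially the paper's: the paper's entire proof is the one line ``The proof is dual to the proof of Corollary~\ref{c2},'' and you explicitly note this duality and carry out the same bookkeeping in detail. One small factual slip: $L_{13}$ and $L_{14}$ are not self-dual but are duals of each other (just as $L_{11}$ and $L_{12}$ are), while $L_{15}$ is self-dual; your conclusion is unaffected since the set $\{L_{11},L_{12},L_{13},L_{14},L_{15}\}$ is closed under duality, which is all the argument needs. Also note that Theorem~\ref{cube theorem} is already a self-dual statement, so once you have verified it for $L$ there is nothing further to do for ``the dual of Theorem~\ref{cube theorem}.''
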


\begin{proof} The proof is dual to the proof of Corollary \ref{c2}.
\end{proof}

\begin{corollary}\label{c4} Let $n \in \mathbb{N}_0$. If $L$ is isomorphic to a sublattice of a free lattice and if $L \in \mathcal{L}_{13}^n$, then $L$ satisfies Theorem \ref{partial modularity 1}, Theorem \ref{partial modularity 2}, the dual of Theorem \ref{partial modularity 2}, and the following property. If $Y$ is an antichain in $L$ such that $|Y| = 3$ and, for some $d \in L$, $a \vee b = d$ for all distinct elements $a,b \in Y$, then at most two elements of $Y$ are not covered by $d$ in $L$. 
\end{corollary}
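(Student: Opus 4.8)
The plan is to follow the template of the proofs of Corollary~\ref{c1}, Corollary~\ref{c2}, and Corollary~\ref{c3}: first pin down which lattices from McKenzie's list cannot occur as sublattices of a member of $\mathcal{L}_{13}^n$, and then invoke the fact that the proofs of Theorem~\ref{cube theorem}, Theorem~\ref{partial modularity 1}, and Theorem~\ref{partial modularity 2} (and of the auxiliary Lemma~\ref{degeneracy lemma}, Lemma~\ref{12 element subposet}, and Lemma~\ref{L15}) used only semidistributivity, the absence of doubly reducible elements, and the exclusion of a short list of those lattices. First I would record that, by Theorem~\ref{Roses varieties} applied with $i=13$ together with Theorem~\ref{Jonsson Rival}, every $\mathcal{L}_{13}^n$ is a semidistributive variety, that $\mathcal{L}_{13}^0=\mathcal{L}_{13}$, and that the chain $\mathcal{L}_{13}=\mathcal{L}_{13}^0\subset\mathcal{L}_{13}^1\subset\cdots$ consists of join-irreducible varieties, each covering its predecessor. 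Also, since $L$ is isomorphic to a sublattice of a free lattice, it satisfies Whitman's condition by Theorem~\ref{Whitmans Theorem}, hence has no doubly reducible elements.

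The next step is to show that if $L$ is isomorphic to a sublattice of a free lattice and $L\in\mathcal{L}_{13}^n$, then $L$ has no sublattice isomorphic to any of $M_3,L_1,L_2,L_3,L_4,L_5$ (immediate from Theorem~\ref{Jonsson Rival}, as $\mathcal{L}_{13}^n$ is semidistributive) and no sublattice isomorphic to any of $L_6,L_7,L_8,L_9,L_{10},L_{11},L_{12},L_{14},L_{15}$; in other words, $L_{13}$ is the only lattice on McKenzie's list that can occur as a sublattice of $L$. For the latter claim I would argue as in the proof of Corollary~\ref{c1}: for each $j\in\{6,7,8,9,10,11,12,14,15\}$ the variety $\mathcal{L}_j$ is a join-irreducible cover of $\mathcal{N}$ distinct from $\mathcal{L}_{13}$, hence incomparable with $\mathcal{L}_{13}=\mathcal{L}_{13}^0$; a short induction along the chain, using the join-irreducibility of each $\mathcal{L}_{13}^{k+1}$ and the fact that it covers $\mathcal{L}_{13}^k$, then gives $\mathcal{L}_j\not\subseteq\mathcal{L}_{13}^n$ for every $n$, so $L_j\notin\mathcal{L}_{13}^n$, and since $\mathcal{L}_{13}^n$ is closed under sublattices, $L$ cannot contain a copy of $L_j$.

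With this exclusion list, each of the four assertions follows by inspecting the corresponding argument. The proof of Theorem~\ref{partial modularity 1}, via Lemma~\ref{degeneracy lemma}, uses only that the lattice is semidistributive, has no doubly reducible elements, and has no sublattice isomorphic to $L_6,L_7,L_8,L_9,L_{10}$, or $L_{15}$; all of these hold for $L$, so $L$ satisfies Theorem~\ref{partial modularity 1}. The proof of Theorem~\ref{partial modularity 2}, via Lemma~\ref{12 element subposet}, uses only semidistributivity, the absence of doubly reducible elements, and the exclusion of $L_9,L_{12}$, and $L_{15}$ (the exclusion of $L_3$ being subsumed under semidistributivity); dualizing, the dual proof uses the exclusion of $L_{10},L_{11}$, and $L_{15}$. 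All of these hold, so $L$ satisfies both Theorem~\ref{partial modularity 2} and its dual. Finally, the asserted property of a three-element antichain $Y$ with a common join $d$ is exactly the ``Secondly'' portion of the proof of Theorem~\ref{cube theorem}, obtained by dualizing the ``Firstly'' argument; the ``Firstly'' argument uses only semidistributivity, Whitman's condition, Lemma~\ref{L15}, and the exclusion of $L_{12},L_{13}$, and $L_{15}$, so its dual uses semidistributivity, Whitman's condition, and the exclusion of $L_{11},L_{14}$, and $L_{15}$ --- in particular not $L_{13}$ --- all of which hold for $L$. Hence $L$ has the stated property, completing the proof.

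The main obstacle is the bookkeeping in the last paragraph: one has to check carefully that the proofs of Theorem~\ref{partial modularity 1}, Theorem~\ref{partial modularity 2}, and the ``Secondly'' half of Theorem~\ref{cube theorem} depend on McKenzie's list only through the exclusions listed, and --- because $L_{13}$ is itself permitted in $\mathcal{L}_{13}^n$ --- that the ``Firstly'' and ``Thirdly'' (meet) portions of Theorem~\ref{cube theorem}, which genuinely require $L_{13}$ to be excluded, are deliberately not asserted here. Apart from this audit, the proof is entirely parallel to those of Corollaries~\ref{c1}--\ref{c3}.
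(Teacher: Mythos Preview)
Your proposal is correct and follows essentially the same approach as the paper's own proof: identify which McKenzie lattices $L_j$ are excluded from $\mathcal{L}_{13}^n$, then audit which of these exclusions each of the relevant proofs actually needs. Your bookkeeping is in fact slightly more careful than the paper's---you explicitly note that Lemma~\ref{degeneracy lemma} also uses $L_{15}$, and you spell out directly that the join version of the antichain property (the ``Secondly'' part of Theorem~\ref{cube theorem}) requires only $L_{11},L_{14},L_{15}$, whereas the paper phrases this more obliquely via the dual statement and the list $L_{12},L_{13},L_{15}$.
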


\begin{proof} The dual of the last property in Corollary \ref{c4} is implied by the first part of the proof of Theorem \ref{cube theorem}. Let $L$ be isomorphic to a sublattice of a free lattice, let $n \in \mathbb{N}_0$, and let $L \in \mathcal{L}_{13}^n$. Then $L$ does not have a sublattice that is isomorphic to any of the following lattices: $L_6$, $L_7$, $L_8$, $L_9$, $L_{10}$, $L_{11}$, $L_{12}$, $L_{14}$, and $L_{15}$. The proof of Theorem \ref{partial modularity 1} relies on $L_6$, $L_7$, $L_8$, $L_9$, $L_{10}$ being forbidden as sublattices, the proof of Theorem \ref{partial modularity 2} relies on $L_9$, $L_{12}$, and $L_{15}$ being forbidden as sublattices, and the proof of the dual of Theorem \ref{partial modularity 2} relies on $L_{10}$, $L_{11}$, and $L_{15}$ being forbidden as sublattices. Moreover, the proof of the first part of Theorem \ref{cube theorem} relies on $L_{12}$, $L_{13}$, and $L_{15}$ being forbidden as sublattices. From this the corollary follows. Since $\mathcal{L}_{13}^n$ is a semidistributive variety.
\end{proof}

\begin{corollary}\label{c5} Let $n \in \mathbb{N}_0$. If $L$ is isomorphic to a sublattice of a free lattice and if $L \in \mathcal{L}_{14}^n$, then $L$ satisfies Theorem \ref{partial modularity 1}, Theorem \ref{partial modularity 2}, the dual of Theorem \ref{partial modularity 2}, and the following property. If $Y$ is an antichain in $L$ such that $|Y| = 3$ and, for some $d \in L$, $a \wedge b = d$ for all distinct elements $a, b \in Y$, then at most two elements of $Y$ cover $d$ in $L$.
\end{corollary}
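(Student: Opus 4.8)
The plan is to argue exactly as in the proofs of Corollary \ref{c1} through Corollary \ref{c4}: pin down which members of McKenzie's list cannot occur as sublattices of $L$, and then observe that the proofs of the cited results invoke only semidistributivity, the absence of doubly reducible elements, and a short list of such forbidden-sublattice conditions. First I would record the standing hypotheses on $L$. Since $L$ is isomorphic to a sublattice of a free lattice, Theorem \ref{Whitmans Theorem} shows that $L$ satisfies Whitman's condition, so $L$ has no doubly reducible elements. Since $L \in \mathcal{L}_{14}^n$, Theorem \ref{Roses varieties} and Theorem \ref{Jonsson Rival} show that $\mathcal{L}_{14}^n$ is a semidistributive variety, so $M_3, L_1, L_2, L_3, L_4, L_5 \notin \mathcal{L}_{14}^n$ and $L$ is semidistributive. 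Finally, because the lattices $L_i$, $1 \le i \le 15$, generate pairwise distinct join-irreducible covers of $\mathcal{N}$ and each term of Rose's chain $\mathcal{L}_{14} \subset \mathcal{L}_{14}^1 \subset \mathcal{L}_{14}^2 \subset \cdots$ is a join-irreducible variety lying strictly above $\mathcal{L}_{14}$, an induction along that chain gives $L_j \notin \mathcal{L}_{14}^n$ for every $j \in \{6,7,8,9,10,11,12,13,15\}$; in particular $L$ has no sublattice isomorphic to any of these nine lattices.

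With this in hand I would dispatch the three modularity-type conclusions. The proof of Theorem \ref{partial modularity 1} (through Lemma \ref{degeneracy lemma}) uses only that $L$ has no doubly reducible elements, that $L$ is semidistributive, and that $L_6, L_7, L_8, L_9, L_{10}, L_{15}$ are forbidden as sublattices; all of this holds for $L$, so Theorem \ref{partial modularity 1} applies. The proof of Theorem \ref{partial modularity 2} (through Lemma \ref{12 element subposet}) uses only that $L$ has no doubly reducible elements, that $L$ is semidistributive, and that $L_9, L_{12}, L_{15}$ are forbidden; again all of this holds. The dual of Theorem \ref{partial modularity 2} has a proof dual to that of Theorem \ref{partial modularity 2}, and so relies only on the self-dual facts that $L$ has no doubly reducible elements and is semidistributive, together with $L_{10}, L_{11}, L_{15}$ being forbidden; these hold as well.

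For the last property I would observe that it is the three-element-antichain instance of the part of Theorem \ref{cube theorem} concerning meets, and is exactly what the first part of the proof of Theorem \ref{cube theorem} establishes: that portion of the argument invokes only semidistributivity, the absence of doubly reducible elements, and the forbidden-sublattice conditions $L_{12}, L_{13}, L_{15} \notin \mathcal{N}$, all of which hold for $L$. This yields the corollary. The point to stress is that the part of Theorem \ref{cube theorem} concerning joins — the second part of its proof — would instead require $L_{14}$ to be forbidden as a sublattice, which fails for $L \in \mathcal{L}_{14}^n$; this is precisely why $\mathcal{L}_{14}^n$ supports only the meet half of Theorem \ref{cube theorem}, dually to the way $\mathcal{L}_{13}^n$ supports only the join half in Corollary \ref{c4}.

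The routine-but-delicate step is the middle one: for each of Lemma \ref{degeneracy lemma}, Lemma \ref{12 element subposet}, and the first part of the proof of Theorem \ref{cube theorem} one must re-read the argument and confirm exactly which lattices from McKenzie's list are invoked and that $L_{14}$ is never among them. The only point that is not pure bookkeeping is the claim that $L_j \notin \mathcal{L}_{14}^n$ for $j \in \{6,\dots,13,15\}$, which rests on the structure of Rose's sequence in Theorem \ref{Roses varieties} together with McKenzie's and J\'onsson and Rival's description of the join-irreducible covers of $\mathcal{N}$.
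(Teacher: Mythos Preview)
Your proposal is correct and follows essentially the same approach as the paper, which simply states that the proof is dual to that of Corollary \ref{c4}. You have spelled out that duality explicitly: identifying the forbidden sublattices $L_j$ for $j \in \{6,\dots,13,15\}$ in $\mathcal{L}_{14}^n$, and then matching each conclusion to the precise list of $L_j$'s invoked in the proofs of Lemma \ref{degeneracy lemma}, Lemma \ref{12 element subposet}, and the first (meet) part of Theorem \ref{cube theorem}. Your inductive justification along Rose's chain for why $L_j \notin \mathcal{L}_{14}^n$ is more explicit than what the paper gives in Corollaries \ref{c1}--\ref{c4}, where this fact is simply asserted.
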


\begin{proof} The proof is dual to the proof of Corollary \ref{c4}.
\end{proof}


\section*{Acknowledgements}\label{sec:acknow} The author would like to thank Claude Laflamme and Robert Woodrow for their support during the writing of an earlier draft of this paper, and the author would like to thank Stephanie van Willigenburg for her support during the writing of the current paper.

\bibliographystyle{amsplain}

\begin{thebibliography}{99}

\bibitem{Baldwin et al} J.~Baldwin, J.~Berman, A.~Glass, and W.~Hodges,
\emph{A combinatorial fact about free algebras},
Algebra Universalis, 15, 145-152 (1982).

\bibitem{ILO} B.~Davey and H.~Priestley,
\emph{Introduction to Lattices and Order, Second Edition},
Cambridge Univ. Press (2002).

\bibitem{Erdos and Rado} P.~Erd\"{o}s and R.~Rado, \emph{Intersection theorems for systems of sets},
J. Lond. Math. Soc. 35, 85 - 90 (1960).

\bibitem{Distributive sublattices} F.~Galvin and B.~J\'onsson, 
\emph{Distributive sublattices of a free lattice}, 
Canad. J. Math. 13, 265-272 (1961).

\bibitem{Semidistributive origins 1} B.~J\'onsson,
\emph{Sublattices of a free lattice},
Canad. J. Math., 13, 256 - 264 (1961).

\bibitem{Semidistributive origins 2} B.~J\'onsson and J.~Kiefer, 
\emph{Finite sublattices of a free lattice},
Canad. J. Math., 14, 487 - 497 (1962).

\bibitem{JonssonRival} B.~J\'onsson, I.~Rival,
\emph{Lattice varieties covering the smallest non-modular lattice variety}, 
Pacific J. Math. 82, 463-478 (1979).

\bibitem{FL} R.~Freese, J.~Je\v{z}ek, and J.~Nation,
\emph{Free Lattices}
Math. Surveys Monogr. 42 (1995).

\bibitem{PL} R. Freese and J.B. Nation
\textit{Projective Lattices}
Pacific J. Math. 75, 93 - 106 (1978).

\bibitem{VL} P. Jipsen and H. Rose, 
Varieties of Lattices, \textit{Springer Verlag, Lect. Notes Math.}, 1533 (1992).

\bibitem{EBNMV} R. McKenzie, 
Equational Bases and Nonmodular Lattice Varieties \textit{Trans. Amer. Math. Soc. 174, (1972).}

\bibitem{Nations proof} J.~Nation, \emph{Finite sublattices of a free lattice}, 
Trans. Amer. Math. Soc. 269 (1) (1982).

\bibitem{Reinhold} J. Reinhold,
\textit{Weak distributive laws and their role in free lattices}
Algebra Universalis, 33, 209-215 (1995).

\bibitem{Rose} H.~Rose,
\emph{Varieties of nonmodular lattices},
Memoirs of Amer. Math. Soc. 292 (1984).

\bibitem{Whitman 1} P.~Whitman, 
\emph{Free lattices},
Ann. Math. 42 (1) (1941).

\bibitem{Whitman 2} P.~Whitman,
\emph{Free lattices II},
Ann. Math. 43 (1) (1942).


\end{thebibliography}

\end{document}